\documentclass[12pt]{article}
\usepackage{amsmath,amssymb,amsfonts,amsthm,mathrsfs}
\usepackage[page,title,titletoc,header]{appendix}
\usepackage{enumerate}
\usepackage{color}
\usepackage{geometry}
\usepackage{hyperref}
\usepackage{algpseudocodex}
\usepackage{algorithm}
\usepackage[numbers,square]{natbib}
\usepackage{graphicx}
\usepackage{subcaption}
\usepackage{changes}
\definechangesauthor[name={Yu Chenhao}, color=blue]{Yu}

\hypersetup{hypertex=true,colorlinks=true,linkcolor=blue,citecolor=red}

\usepackage{makecell}
\usepackage{multirow}
\usepackage{array}
\usepackage{threeparttable}

\numberwithin{equation}{section}
\newtheorem{theorem}{Theorem}
\newtheorem{lemma}{Lemma}[section]
\newtheorem{proposition}{Proposition}[section]
\newtheorem{corollary}{Corollary}
\newtheorem{definition}{Definition}

\newtheorem{assumption}{Assumption}

\newtheoremstyle{case}{}{}{}{}{\bf}{:}{ }{}
\theoremstyle{case}

\newtheoremstyle{case1}{}{}{}{}{\bf}{:}{ }{}
\theoremstyle{case1}

\allowdisplaybreaks

\newcommand\keywords[1]{\textbf{Keywords: }#1}

\newcommand{\be}{\begin{equation}}
\newcommand{\ee}{\end{equation}}
\newcommand{\bea}{\begin{eqnarray*}}
\newcommand{\eea}{\end{eqnarray*}}
\usepackage{amsmath,amsfonts,bm}

\def\Algref#1{Algorithm~\ref{#1}}

\newcommand{\lemmaref}[1]{Lemma~\ref{#1}}
\newcommand{\corollaryref}[1]{Corollary~\ref{#1}}

\def\1{\bm{1}}

\def\rvz{{\mathbf{z}}}

\newcommand{\rvxi}[1]{\bm{\xi}_{#1}}
\newcommand{\rvup}[1]{\bm{\upsilon}_#1}

\def\va{{\bm{a}}}
\def\vb{{\bm{b}}}

\def\vg{{\bm{g}}}

\def\vw{{\bm{w}}}
\def\vx{{\bm{x}}}
\def\vy{{\bm{y}}}
\def\vz{{\bm{z}}}

\DeclareMathAlphabet{\mathsfit}{\encodingdefault}{\sfdefault}{m}{sl}
\SetMathAlphabet{\mathsfit}{bold}{\encodingdefault}{\sfdefault}{bx}{n}

\newcommand{\R}{\mathbb{R}}

\newcommand{\la}{\left\langle}
\newcommand{\ra}{\right\rangle}
\newcommand{\sumlimits}[2]{\sum\limits_{#1}^{#2}}
\newcommand{\fx}[1]{f\left(\vx_{#1}\right)}
\newcommand{\fxmd}[1]{f\left(\vx_{#1}^{md}\right)}
\newcommand{\fxag}[1]{f\left(\vx_{#1}^{ag}\right)}
\newcommand{\xag}[1]{\vx_{#1}^{ag}}
\newcommand{\xmd}[1]{\vx_{#1}^{md}}
\newcommand{\nablaf}[2]{\nabla f(\vx_{#1}^{#2})}
\newcommand{\norm}[1]{\left\|{#1}\right\|}
\newcommand{\triangleag}[1]{\triangle_{#1}^{ag}}
\newcommand{\trianglemd}[1]{\triangle_{#1}^{md}}
\newcommand{\withprobability}{with probability at least $1-\delta$}

\newcommand{\Fone}{\mathcal{F}_1}
\newcommand{\Ftwo}{\mathcal{F}_2}
\newcommand{\Fthree}{\mathcal{F}_3}

\newcommand{\PF}{\mathcal{P}(\Ftwo)}
\newcommand{\functionvaluegap}[2]{f\left(\vx_{#1}^{#2}\right)-f^*}
\newcommand{\LFone}{\mathcal{L}_1}

\newcommand{\logte}{\log\frac{T\mathrm{e}}{\delta}}

\newcommand{\twolayerL}{L_2}
\newcommand{\Ffour}{\mathcal{F}_4}

\newcommand{\Llow}{L_0+L_1\left(\norm{\nabla \fx{t}}+\norm{\nabla \fx{t}}^p\right)+L_2\left(\norm{\nabla \fx{t}}+\triangle_t^q\right)}

\newcommand{\Ldeterministic}{2\left(L_0+L_1\left(\left(g(\Fone)\right)^{\frac{1}{2}}+\left(g(\Fone)\right)^\frac{p}{2}\right)+L_2\left(\left(g(\Fone)\right)^{\frac{1}{2}}+\Fone^q\right)+8\mathcal{C}_1^2\left(L_1+L_2\right)^4 \right)}

\newcommand{\Ldescentlemma}{L_0+L_1 \left(g(\Ftwo)\right)^\frac{p}{2}+L_2 \Ftwo^q}

\newcommand{\nabPsi}[2]{\nabla \Psi(x_{#1}^{#2})}

\newcommand{\gapdistance}[2]{ A_{#1}\triangleag{#1}+\frac{2}{\beta}\|\vx_{#2}-\vx^*\|^2}

\begin{document}
\title{Convergence Analysis of Nesterov's Accelerated Gradient Descent under Relaxed Assumptions}

\author{Chenhao Yu\thanks{Chenhao Yu is with the Center for Data Science and School of Mathematical Sciences, Zhejiang University.},\and 
Junhong Lin\thanks{Junhong Lin is with the Center for Data Science, Zhejiang University.}
}
\date{\small First on openreview.net in Dec. 2025}

\maketitle

\begin{abstract}
We study convergence rates of Nesterov's Accelerated Gradient Descent (NAG) method for convex optimization in both deterministic and stochastic settings. We focus on a more general smoothness condition raised from several machine learning problems empirically and theoretically. We show the accelerated convergence rate of order $\mathcal{O}\left(1/T^2\right)$ in terms of the function value gap, given access to exact gradients of objective functions, matching the optimal rate for standard smooth convex optimization in \citep{nesterov1983method}. Under the relaxed affine-variance noise assumption for stochastic optimization, we establish the high-probability convergence rate of order $\tilde{\mathcal{O}}\left(\sqrt{\log\left(1/\delta\right)/T}\right)$ and this rate could improve to $\tilde{\mathcal{O}}\left(\log\left(1/\delta\right)/T^2\right)$ when the noise parameters are sufficiently small. Here, $T$ denotes the total number of iterations and $\delta$ is the probability margin. Up to logarithm factors, our probabilistic convergence rate reaches the same order of the expected rate obtained in \citep{ghadimi2016accelerated} where the assumptions of bounded variance noise and Lipschitz smoothness are required.
We additionally establish the convergence rates of the studied algorithms in the non-convex setting.
\end{abstract}

\keywords{Nesterov's Accelerated Gradient, Convergence Theory, Convex Optimization, Stochastic Optimization, Generalized Smoothness}

\section{Introduction}
In this paper, we consider the following classical unconstrained optimization problem,
\begin{align}
  \label{problem}
  \min\limits_{\vx\in\mathbb{R}^d} f(\vx),
\end{align}
where the objective function $f(\vx)$ is convex and can be potentially stochastic, i.e.,
\begin{align*}
  f(\vx)=\mathbb{E}_{\rvz \sim \mathcal{D}}[f_{\rvz}\left(\vx;\rvz\right)].
\end{align*} 
Here $\mathcal{D}$ is a probability distribution from which the random vector $\rvz$ is drawn.

%In this paper, we study the convergence rate for the well-known algorithm, Nesterov's Accelerated Gradient Descent (NAG) and its stochastic variants with convex objective function.

%In the convex optimization, we usually analyze the convergence rate in terms of the function value gap. 

Gradient-based algorithms \cite{robbins1951stochastic,nesterov1983method,nesterov2013introductor,duchi2011adaptive} play an important role in solving (\ref{problem}).
%As usual, one typically focuses on the function value gap for convex objectives and the squared gradient norm for non-convex ones. 
As usual, one typically focuses on the function value gap for convex objectives and the squared gradient norm for general non-convex ones.\footnote{ 
An extensive literature on minimizing structured non-convex functions focuses on the function value gap. Examples include work on Polyak-Łojasiewicz functions \cite{karimi2016linear}, (strongly) quasar-convex functions \cite{hinder2020near} and (strongly) quasiconvex functions \cite{grad2025strongly}. This is beyond the discussion of this paper.}
In the deterministic setting with access to the exact gradient $\nabla f(\vx)$, Gradient Descent (GD) achieves a convergence rate of $\mathcal{O}(1/T)$ for smooth convex functions \cite{nesterov2013introductor}, whereas for smooth non-convex functions, the rate of the same order is obtained for the squared gradient norm.
Here, $T$ is the total number of iterations.
The convergence rate for smooth convex optimization can be improved to $\mathcal{O}(1/T^2)$ using Nesterov's Accelerated Gradient Descent (NAG), as established in the seminal work \cite{nesterov1983method}. Furthermore, this complexity bound is known to be optimal among gradient based algorithms \cite{nemirovskij1983problem}, without further assumptions.
%In the convex setting, the rate can be improved to $\mathcal{O}(1/T^2)$ by studying Nesterov's Accelerated Gradient Descent (NAG) in a seminal work \cite{nesterov1983method} and this complexity bound is not improvable for smooth convex optimization when $d$ is large enough \cite{nemirovskij1983problem}.

For stochastic optimization where only the gradient estimator is accessible, Stochastic Gradient Descent (SGD) \cite{robbins1951stochastic}  is commonly used.
%and its variants \cite{ward2020adagrad,wang2023convergence2} are commonly used.
\cite{lan2012optimal} provided an expected upper bound of order $\mathcal{O}\left(1/T+\sigma/\sqrt{T}\right)$ for convex objective functions and \cite{ghadimi2013stochastic} obtained the bound of the same order for the non-convex case, both of them assuming bounded variance noise with noise parameter $\sigma$ and smooth objective functions. 
This bound is optimal in the non-convex setting since it matches the lower bound in \cite{arjevani2023lower}.
To study the acceleration behavior in the stochastic convex optimization, \cite{lan2012optimal,ghadimi2016accelerated} explored (and generalized) stochastic NAG (SNAG) and obtained the expected convergence rate of order $\mathcal{O}\left(1/T^2+\sigma/\sqrt{T}\right)$ for smooth objective functions, which in general cannot be improved in the same setting \cite{nemirovskij1983problem,lan2012optimal}.

Although much theoretical progress has been made on gradient-based algorithms, most of these analysis required Lipschitz smoothness condition \cite{ghadimi2013stochastic,ghadimi2016accelerated,levy2018online,ward2020adagrad,attia2023sgd}, i.e., $\exists L>0$, such that 
\begin{align*}
\norm{\nabla f\left(\vx\right)-\nabla f\left(\vy\right)}\leq L \norm{\vx-\vy}, \forall \vx, \vy \in\mathbb{R}^d,
\end{align*}
or equivalently $\norm{\nabla^2 f\left(\vx\right)}\leq L, \forall \vx \in \mathbb{R}^d$ for twice-differentiable functions.
Recently, several researchers have found evidence that this condition is not satisfied by many important machine learning models \cite{chen2023generalized}, such as neural network models \cite{zhang2019gradient} and distributionally robust optimization \cite{jin2021non}.
Based on empirical observations, \cite{zhang2019gradient} proposed $\left(L_0,L_1\right)$-smoothness condition, allowing $\norm{\nabla^2 f\left(\vx\right)}$ to grow linearly with respect to $\norm{\nabla f\left(\vx\right)}$, and later \cite{zhang2020improved} further relaxed this condition, not requiring the second differentiability of the objective function, i.e., there exist $L_0,L_1\geq0$, for any $\vx,\vy\in\mathbb{R}^d$, such that $\norm{\vx-\vy}\leq 1/L_1$,
\begin{align}
  \label{eq:gnlsm}
  \norm{\nabla f\left(\vx\right)- \nabla f\left(\vy\right)}\leq \left(L_0+L_1\norm{\nabla f\left(\vx\right)}\right) \norm{\vx-\vy}.
\end{align}

Based on this generalized smoothness condition, \cite{yu2025convergence} studied Randomized Stochastic Accelerated Gradient Descent (RSAG) proposed in \cite{ghadimi2016accelerated} and provided high-probability convergence rate of order $\tilde{\mathcal{O}}\left(1/T+\sigma/\sqrt{T}\right)$ for both convex and non-convex optimization (under sub-Gaussian relaxed affine-variance noise), which implies a gap between optimal rate obtained in the smooth convex optimization.
 Under a similar generalized smoothness condition, \cite{li2024convex} showed accelerated convergence rate of order $\mathcal{O}\left(1/T^2\right)$ for deterministic NAG  in convex optimization, and they also provided expected convergence rate of order $\mathcal{O}\left(\left(1+\sigma\right)/\sqrt{T}\right)$ for SGD in the non-convex stochastic optimization.
 To the best of our knowledge, it remains an open question whether SNAG can achieve an accelerated convergence rate of order $\tilde{\mathcal{O}}\left(1/T^2+ \sigma/\sqrt{T}\right)$ under the generalized smoothness condition for convex optimization. We believe that a proof for the stochastic setting presents certain challenges; in particular, the analysis for deterministic NAG by \cite{li2024convex} does not appear to be trivially extendable.

In this paper, we aim to close this gap,
developing the
accelerated convergence rate for SNAG under more generalized smoothness and relaxed affine-variance noises for stochastic convex optimization.
%
%
%However, \cite{li2024convex} does not include the research for SNAG, which is one of the motivations for our paper.
%As for other gradient-based algorithms, \cite{zhang2019gradient,zhang2020improved,gorbunov2024methods} studied gradient clipping method while \cite{faw2023beyond,wang2023convergence,hongrevisiting,yu2025convergence}  investigated adaptive gradient algorithms.
Specifically,  inspired by the theoretical examples in \cite{taheri2023fast} and \cite{chen2023generalized}, we focus on the following more general and practical smoothness condition.                                                         
\begin{definition}[$\left(L_0,L_1,L_2\right)$-smoothness\footnote{For the sake of rigor, we define $1/0=+\infty$ throughout the paper.}]
  \label{definition 1}
  Let $L_i \geq 0, \forall  1\leq  i\leq 3.$ 
    $f(\cdot)$ is $\left(L_0,L_1,L_2\right)$-smooth if and only if for any $\vx,\vy\in\R^d$ such that $\norm{\vx-\vy}\leq \min \left\{\frac{1}{L_1},\frac{1}{L_2}\right\}$,
    \begin{align}
        \norm{\nabla f(\vx)- \nabla f(\vy)}\leq \left(L_0+L_1\norm{\nabla f(\vx)}^p+L_2\left(f(\vx)-f^*\right)^q\right)\norm{\vx-\vy},
    \end{align}
    where $p\in[0,2)$ and $q\geq 0$.
\end{definition}
Obviously, Definition \ref{definition 1} covers a broader range of relaxed smoothness.
 %than the generalized smoothness proposed in \cite{zhang2019gradient,zhang2020improved} and, of course, the classic Lipschitz smoothness. 
 Particularly, it is situated between two related notions: $(L_0,L_1,0)$-smoothness, which is empirically verified  \cite{zhang2019gradient} for neural networks training and is theoretically proved for phase retrieval from \cite{chen2023generalized} and the appendix, and $(L_0,0,L_2)$-smoothness, which is theoretically proven for specific shallow neural networks from \cite{taheri2023fast} and the appendix. 
 	%The $(L_0,L_1,L_2)$-smoothness condition, thus subsumes both and provides a unified framework for analysis.

%We show in Remark that a two-layer  neural network satisfies Definition \ref{definition 1} under some regularity conditions but out of the range of Lipschitz smoothness and $\left(L_0,L_1\right)$- smoothness. 

%Recently, AGD and its variants have become popular especially for solving large scale machine learning problems, since they can accelerate the training rate with only first-order information.
Our analysis relies on a relaxed affine-variance noise condition, which will be formally defined in (\ref{noise assumption}) \cite{hongrevisiting,yu2025convergence}. 
This condition was initially proposed by \cite{khaledbetter} in the expected form given in (\ref{expectation noise assumption}), and many practical stochastic gradient settings, such as sub-sampling and compression schemes satisfy this noise model, but not bounded variance or the strong growth condition that
the stochastic gradient $g(\vx)$ of $f$ at $\vx$ satisfies, for some non-negative constants $B$,
\begin{align}
	\label{strong growth condition}
	\mathbb{E}\left[\norm{g(\vx)-\nabla f(\vx)}^2\right]\leq B\norm{\nabla f(\vx)}^2, \forall \vx\in\mathbb{R}^d.
\end{align}  
Closely related to our works are 
\cite{vaswani2019fast,gupta2024nesterov}.
Under the strong growth condition,
\cite{vaswani2019fast} analyzed the Accelerated Coordinate Descent method (ACDM) \cite{nesterov2012efficiency}, while \cite{gupta2024nesterov} studied SNAG when $B\leq 1$.
Both works achieved the expected accelerated convergence rates in the (strongly) convex setting, but only under the standard smoothness condition.

%Numerous literature have payed attention to some relaxed noise assumptions, to better characterize the noise model in practice.
%Considering the following strong growth condition i.e., the stochastic gradient $g(\vx)$ of $f$ at $\vx$ satisfies that for some non-negative constants $B$,
%\begin{align}
%  \label{strong growth condition}
%  \mathbb{E}\left[\norm{g(\vx)-\nabla f(\vx)}^2\right]\leq B\norm{\nabla f(\vx)}^2, \forall \vx\in\mathbb{R}^d,
%\end{align}
%\cite{vaswani2019fast} analyzed the Accelerated Coordinate Descent (ACDM) method proposed by \cite{nesterov2012efficiency} and \cite{gupta2024nesterov} studied SNAG under $B\leq 1$.
%Both of them obtained the expected acceleration results in the (strongly) convex standard-smooth setting.

We summarize our main contributions as follows.
\begin{enumerate}
  \item \label{contribution a} Motivated by several machine learning problems, we propose a more general smoothness condition defined in Definition \ref{definition 1}.
  \item \label{contribution b} Under this new smoothness condition, we analyze NAG in the deterministic and convex setting, and we show the accelerated convergence rate of order $\mathcal{O}\left(1/T^2\right)$, matching the optimal rate in \cite{nesterov1983method}.
  \item \label{contribution c} For stochastic optimizations under this general smoothness, we focus on the sub-Gaussian version of
  relaxed affine-variance noise (Assumption \ref{assumption 3}),
  and we prove that SNAG converges at the rate of $\tilde{\mathcal{O}}\left(1/T^2+\sqrt{\left(A+B+C\right)/T}\right)$ in high probability.  This rate  matches the optimal convergence rate for smooth convex optimization under bounded variance noise \cite{lan2012optimal,ghadimi2016accelerated}.
 It could improve to $\tilde{\mathcal{O}}\left(1/T^2\right)$ if the noise parameters $A,B$ and $C$ are small enough.
  \item \label{contribution d}  As a byproduct, we apply our analysis to standard smooth optimization under the expected relaxed affine-variance noises (Assumption \ref{assumption 4}), and we  demonstrate that SNAG reaches the convergence rate of order $\mathcal{O}\left(\left(1+B\right)/T^2+\sqrt{\left(A+C\right)/T}\right)$ in expectation. 
  %This rate could also accelerate to $\mathcal{O}\left(1/T^2\right)$ when $A,B$ and $C$ are sufficiently small. 
\end{enumerate}

The rest of this paper are organized as follows. We first briefly discuss some extra works related to NAG, generalized smoothness condition and the relaxed noise assumption in the next section. 
We then introduce some necessary assumptions and notations in Section \ref{section preliminary}. 
In Section \ref{subsection 4.1} and Section \ref{subsection 4.2}, we provide the deterministic convergence result and high-probability convergence result under  $(L_0,L_1,L_2)$-smoothness respectively.
In Section \ref{section 5}, we present the expected convergence rate of SNAG under the classic smoothness.
In Section \ref{numerical experiment}, we conduct numerical experiments and show the better performance of SNAG compared to SGD for the two-layer neural network and the phase retrieval model.
We also provide the convergence result for non-convex stochastic optimization under the generalized smoothness and relaxed noise assumptions in Appendix \ref{section non-convex}.
All the omitted proofs and lemmas are in the appendix.

\section{Related work}
\label{related work}

We only briefly mention the most related works due to space and knowledge constraints.

\textbf{Accelerated Gradient Descent}
%As the pioneer of accelerated gradient descent (AGD) methods,
 NAG \cite{nesterov1983method} was originally designed for smooth and convex optimizations in the deterministic setting, and it achieved the accelerated convergence rate of order $\mathcal{O}\left(1/T^2\right)$, compared to $\mathcal{O}\left(1/T\right)$ of GD.
%From then on, 
Numerous literature focused on the theoretical and practical convergence behavior of NAG and its variants \cite{nesterov2005smooth,beck2009fast}.
For example, \cite{su2016differential}  introduced a second-order ODE and accompanying tools for characterizing NAG.
\cite{lan2012optimal} generalized NAG for non-smooth and stochastic convex problems under certain conditions and provided optimal convergence rates under proper step sizes.
\cite{ghadimi2016accelerated} proposed RSAG\footnote{Indeed, RSAG in \cite{ghadimi2016accelerated} and SNAG in this paper have the same form.}, and showed  expected convergence rate of $\mathcal{O}\left(1/T+C/\sqrt{T}\right)$ in the non-convex case while $\mathcal{O}\left(1/T^2+C/\sqrt{T}\right)$ in the convex case, both under bounded variance noises and smoothness. 
\cite{li2024convex} obtained convergence rate of order $\mathcal{O}\left(1/T^2\right)$ for NAG under generalized smoothness and convexity, matching those for standard smooth convex optimizations. 
Their analysis is limited to the non-stochastic case.
Under mild noises in (\ref{strong growth condition}) and standard smoothness, \cite{vaswani2019fast} proved that ACDM \cite{nesterov2012efficiency}, which is a variant of SNAG, could reach expected accelerated convergence rates in both convex and strongly convex cases.
%Along this direction,
Under the same setting, \cite{gupta2024nesterov} proposed a new accelerated gradient method named AGNES  and they proved that the algorithm could achieve acceleration, requiring fewer hyperparameters than ACDM.
Furthermore, \cite{hermant2025gradient} showed the expected convergence rate of $\mathcal{O}\left(\left(B+1\right)/T^2\right)$ and almost-sure rate of $o\left(\left(B+1\right)/T^2\right)$ for ACDM in general convex optimization problems, and they derived fast convergence rates for ACDM in strongly convex optimization problems.
\iffalse
Under the noise assumption of $\mathbb{E}\left[\norm{g(\vx)}\right]\leq \tilde{B}\norm{\nabla f(\vx)}^2, \forall \vx\in\mathbb{R}^d$
 and standard smoothness,  \cite{hermant2025gradient} showed  convergence rates of $\mathcal{O}\left(\tilde{B}/T^2\right)$ for SNAG
 in general convex optimization problems, and they also derived fast convergence rates for SNAG in strongly convex optimization problems.
\fi

\begin{table}[h]
\small
\renewcommand \arraystretch{1.8} 
\caption{Related works under the generalized smoothness condition.}
\label{table 1}
\vspace{.1 in}
\begin{center}
\begin{threeparttable}
  \begin{tabular}{|c|c|c|c|c|c|c|}
\hline
  &\textbf{Alg.} & \textbf{Convexity} & \textbf{Noise} & \textbf{Smoothness}& \makecell{\textbf{Conv.} \\ \textbf{type}} & \makecell{\textbf{Conv.} \\ \textbf{rate}} \\
\hline
\cite{zhang2019gradient} & SGD & non-convex & \makecell{bounded \\(a.s.)} & $(L_0,L_1)$ &  $\mathbb{E}$ & $\frac{1+C}{\sqrt{T}}$\\
\hline
\cite{li2024convex} & SGD & non-convex & \makecell{bounded \\ variance}  & generalized $(L_0,L_1)$ &  $\mathbb{E}$ &$\frac{1+\sqrt{C}}{\sqrt{T}}$  \\
\hline
\cite{li2024convex} & NAG & convex & - & generalized $(L_0,L_1)$ &  - & $\frac{1}{T^2}$ \\
\hline
\multirow{2}{*}{\cite{yu2025convergence}} & \multirow{2}{*}{\makecell{SGD or\\ SNAG}} &  non-convex &  \multirow{2}{*}{\makecell{relaxed affine \\ (a.s.)}} & \multirow{2}{*}{$(L_0,L_1)$} &  \multirow{2}{*}{\textbf{w.h.p}} & \multirow{2}{*}{$\frac{1}{T}+\frac{\sqrt{A}+\sqrt{C}}{\sqrt{T}}$} \\ \cline{3-3} 
 &  & convex & & & & \\
\hline
Thm \ref{theorem 1} & NAG & convex & - & $(L_0,L_1,L_2)$ &  - & $\frac{1}{T^2}$ \\
\hline
Thm \ref{theorem 2} & SNAG & convex & \makecell{relaxed affine \\(sub-Gaussian form)} & $(L_0,L_1,L_2)$ & \textbf{w.h.p} & $\frac{1}{T^2}+\sqrt{\frac{A+B+C}{T}}$  \\
\hline
Thm \ref{theorem 3} & SNAG & convex & relaxed affine  & Lipschitz & $\mathbb{E}$ & $\frac{1}{T^2}+\sqrt{\frac{A+C}{T}}$ \\
\hline
Thm \ref{theorem 4} & SNAG & non-convex & \makecell{relaxed affine \\(sub-Gaussian form)} & $(L_0,L_1,L_2)$ & \textbf{w.h.p} & $\frac{1}{T}+\sqrt{\frac{A+C}{T}}$\\
\hline
\end{tabular}
\begin{tablenotes}
\scriptsize
\item[1] Indeed, \cite{li2024convex} provided the probabilistic results for SGD while the dependence of the probability margin is the polynomial of $1/\delta$. In order to distinguish them from other high-probability results with dependence of $\log\frac{T}{\delta}$, we consider them as the expected results.
\item[2] ``Alg.'', ``con.'' and ``cond.'' are the shorthand of the words ``Algorithm'', ``convergence'' and ``condition''. 
\item[3] We ignore the dependence on the noise parameters order and the logarithm factors of the horizon $T$ in this table.
\item[4] The extra condition for the gradient is required in \cite{zhang2019gradient}.
\end{tablenotes}
\end{threeparttable}
\end{center}
\end{table}

\begin{table}[!h]
\small
\renewcommand \arraystretch{1.8} 
\caption{Previous works related to NAG.}
\label{table 2}
\vspace{.1 in}
\begin{center}
\begin{threeparttable}
  \begin{tabular}{|c|c|c|c|c|c|c|}
\hline
  &\textbf{Agl.} & \textbf{Convexity} & \textbf{Noise} & \textbf{Smoothness} & \makecell{\textbf{Conv.} \\ \textbf{type}} & \makecell{\textbf{Conv.} \\ \textbf{rate}}  \\ 
\hline
\cite{nesterov1983method} & NAG & convex & - & Lipschitz & - & $\frac{1}{T^2}$ \\
\hline
\multirow{2}{*}{\cite{ghadimi2016accelerated}} & \multirow{2}{*}{SNAG} & non-convex & \multirow{2}{*}{\makecell{bounded \\ variance}} & \multirow{2}{*}{Lipschitz} & \multirow{2}{*}{$\mathbb{E}$} & $\frac{1}{T}+\sqrt{\frac{C}{T}}$  \\ \cline{3-3} \cline{7-7}
 &  & convex &  & & & $\frac{1}{T^2}+\sqrt{\frac{C}{T}}$ \\
\hline
\cite{vaswani2019fast} & SNAG & convex & strongly growth & Lipschitz & $\mathbb{E}$ & $\frac{B+1}{T^2}$ \\
\hline
\cite{li2024convex} & NAG & convex & -  & generalized $(L_0,L_1)$ & - & $\frac{1}{T^2}$ \\
\hline
\multirow{2}{*}{\cite{hermant2025gradient}} & \multirow{2}{*}{SNAG} & \multirow{2}{*}{convex} & \multirow{2}{*}{strongly growth} & \multirow{2}{*}{Lipschitz} & $\mathbb{E}$ & \multirow{2}{*}{$\frac{B+1}{T^2}$}  \\  \cline{6-6} 
 &  &  &  &  & a.s. & \\
\hline
Thm \ref{theorem 1} & NAG & convex & - & $(L_0,L_1,L_2)$ &  - & $\frac{1}{T^2}$  \\
\hline
Thm \ref{theorem 2} & SNAG & convex & \makecell{relaxed affine \\(sub-Gaussian form)} & $(L_0,L_1,L_2)$ & \textbf{w.h.p} & $\frac{1}{T^2}+\sqrt{\frac{A+B+C}{T}}$   \\
\hline
Thm \ref{theorem 3} & SNAG & convex & relaxed affine  & Lipschitz & $\mathbb{E}$ & $\frac{1}{T^2}+\sqrt{\frac{A+C}{T}}$  \\
\hline
Thm \ref{theorem 4} & SNAG & non-convex & \makecell{relaxed affine \\(sub-Gaussian form)} & $(L_0,L_1,L_2)$ & \textbf{w.h.p} & $\frac{1}{T}+\sqrt{\frac{A+C}{T}}$\\
\hline
\end{tabular}
\begin{tablenotes}
\scriptsize
\item[1] As discussed in Section \ref{related work}, \cite{vaswani2019fast,hermant2025gradient} analyzed ACDM, which is a variant of SNAG. However, ACDM is equivalent to SNAG with the specific step size setting in the convex case.
\item[2] ``Alg.'', ``con.'' and ``cond.'' are the shorthand of the words ``Algorithm'', ``convergence'' and ``condition''. 
\item[3] We ignore the dependence on the noise parameters order and the logarithm factors of the horizon $T$ in this table.
\end{tablenotes}
\end{threeparttable}
\end{center}
\end{table}

\textbf{Relaxed affine-variance noise and its variants}
Affine-variance noise (i.e., $A=0$ in (\ref{expectation noise assumption})) has attracted increasing attention as it can characterize gradient noises in many practical problems, such as machine learning with feature noise \cite{fuller2009measurement,khani2020feature}, robust linear regression \cite{xu2008robust} and multilayer networks \cite{faw2022power}.
\cite{bottou2018optimization} analyzed vanilla SGD in expectation and pointed out that there is no essential difference in the analysis between the bounded variance noise and the affine-variance noise under standard smoothness.
For Adagrad-Norm, \cite{faw2022power} provided expected convergence rates of order $\tilde{\mathcal{O}}\left(1/\sqrt{T}\right)$ in the non-convex setting and this rate could reach $\tilde{\mathcal{O}}\left(1/T\right)$ when $B,C$ are of order $\mathcal{O}\left(1/\sqrt{T}\right)$.
Under the same setting, \cite{wang2023convergence} further proposed a novel auxiliary function for analysis and obtained a tighter bound especially when $C=0$. 
\cite{attia2023sgd} derived high probability convergence for Adagrad-Norm in both convex and non-convex cases, under almost-sure version of affine-variance noises.
\cite{khaledbetter} proposed the relaxed affine-variance noise (see (\ref{expectation noise assumption})), and they derived an expected convergence rate of order $\mathcal{O}\left(1/\sqrt{T}\right)$ for SGD in the non-convex and smooth setting.
\cite{hongrevisiting} considered sub-Gaussian version of the relaxed affine-variance noise, and they derived probabilistic convergence rates under $\left(L_0,L_1\right)$-smoothness. \cite{yu2025convergence} analyzed RSAG (covering SGD as a special case) in both convex and non-convex settings under $\left(L_0,L_1\right)$-smoothness.

\textbf{Generalized smoothness}
%Since global smoothness is limited in many practical problems,
Motivated by practical observations,
 \cite{zhang2019gradient} proposed $\left(L_0,L_1\right)$-smoothness for twice differentiable functions.  They showed $\mathcal{O}\left(1/T\right)$ convergence rate for GD and $\mathcal{O}\left(1/\sqrt{T}\right)$ convergence rate for SGD in the non-convex setting, involving extra clipping mechanisms.
\cite{zhang2020improved} improved the convergence analysis on problem-dependent parameters for clipped SGD under essentially the same smoothness.
In the analysis of Adagrad-Norm under affine-variance noises, \cite{faw2023beyond} derived convergence bounds of order $\tilde{\mathcal{O}}\left(1/\sqrt{T}\right)$ in the non-convex case when $B<1$.
 \cite{wang2023convergence} gave a counter-example showing the necessity of prior knowledge on problem parameters for learning rates in AdaGrad under $\left(L_0,L_1\right)$-smoothness. 
Via a notion of continuity, \cite{guille2021study}  demonstrated that the
 strong convexity and smoothness have a weakness resulting in a lack of
 robustness for tuning first-order algorithms, and they presented promising alternatives.
 
We list the most relevant works in Table \ref{table 1} and Table \ref{table 2} for comparisons.

% presented an argument on the necessity to adopt different conditions from the ones
% classically used (smoothness and strong convexity), in order to tune the hyperparameters of first-order optimization in a
% meaningful way. 
 
%Inspired by the research about two-layer neural network \cite{taheri2023fast} and the phase retrial \cite{jin2021non}, we further general $\left(L_0,L_1\right)$-smoothness condition as defined in Definition \ref{definition 1}.
% \cite{li2024convergence,hongrevisiting,hong2024convergence,yu2025convergence} studied adaptive-type algorithms, and derived high-probability convergence rate under $\left(L_0,L_1\right)$-smoothness.

\section{Preliminaries}
\label{section preliminary}
We consider Problem (\ref{problem}) over the Euclidean space $\mathbb{R}^d$ with the $l_2$ norm, denoted as $\norm{\cdot}$.
We first introduce the following assumption.
\begin{assumption}[Below bounded]
   \label{assumption 1} There exists a minimizer $\vx^*\in \mathbb{R}^d$ and the objective function is bounded from below, i.e.,  $$f\left(\vx^*\right)=f^*:=\inf_{\vx\in\mathbb{R}^d}f(\vx)>-\infty.$$
\end{assumption}
In the stochastic setting, we make the following assumptions.
\begin{assumption}[Unbiased estimator]
   \label{assumption 2} The gradient oracle returns an unbiased estimator of $\nabla f(\vx)$, i.e., for all $\vx\in\R^d$, 
  \begin{align*}
    \mathbb{E}_\rvz\left[\nabla f_\rvz(\vx;\rvz)\right]=\nabla f(\vx).
  \end{align*}
\end{assumption}
 \begin{assumption}[Relaxed affine-variance (sub-Gaussian form)]
   \label{assumption 3} The gradient oracle satisfies that for some constants $A,B,C\geq 0$, 
  \begin{align}
     \label{noise assumption}
  \mathbb{E}_{\rvz} \left[\exp\left(\frac{\norm{\nabla f_\rvz(\vx;\rvz)-\nabla f(\vx)}^2}{A\left(f(\vx)-f^*\right)+B\norm{\nabla f\left(\vx\right)}^2+C}\right)\right]  \leq \exp(1), \forall \vx\in\mathbb{R}^d.
  \end{align}
 \end{assumption}
 \begin{assumption}[Relaxed affine-variance (expected form)]
  \label{assumption 4} The gradient oracle satisfies that for some constants $A,B,C\geq 0$, 
\begin{align}
  \label{expectation noise assumption}
    \mathbb{E}_{\rvz}\left[\norm{\nabla f_\rvz(\vx;\rvz)-\nabla f(\vx)}^2\right] \leq A\left(f(\vx)-f^*\right)+B\norm{\nabla f\left(\vx\right)}^2+C, \forall \vx\in\mathbb{R}^d.
\end{align}
\end{assumption}
Assumption \ref{assumption 2} is a relevant assumption for studying many practical settings and is also commonly used in the analysis of stochastic optimization.
Assumption \ref{assumption 3} is weaker than the bounded noise in \cite{zhang2019gradient,zhang2020improved} and the almost-sure version of (relaxed) affine-variance noise in \cite{attia2023sgd,hongrevisiting,yu2025convergence}. 
Although Assumption \ref{assumption 3} is stronger than its expected version in Assumption \ref{assumption 4} as it controls all moments of the noise distribution, while Assumption 4 only controls its second moment (the variance), the former one could lead to high-probability convergence, which could ensure corresponding expected convergences.
Assumption \ref{assumption 4} was initially proposed by \cite{khaledbetter} under the name expected smoothness. Its original, equivalent form is:
  $\mathbb{E}_{\rvz}\left[\norm{\nabla f_\rvz(\vx;\rvz)}^2\right]\leq A\left(f(\vx)-f^*\right)+\tilde{B}\norm{\nabla f\left(\vx\right)}^2+C, \forall \vx\in\mathbb{R}^d$.

\textbf{Notations}
We denote the set $\{1,\cdots,T\}$ as $[T]$. 
We use $\mathbb{E}_t[\cdot]\triangleq\mathbb{E}[\cdot|\rvz_1,\cdots,\rvz_{t-1}]$ to represent the conditional expectation, where $\rvz_i$ is the random sample in the $i$-th gradient oracle. 
The notation $a\sim \mathcal{O}(b)$ and $a\leq \mathcal{O}(b)$ refer to $c_1 b\leq a\leq c_2 b$ and $a\leq c_3 b$ with $c_1, c_2, c_3$ being positive constants, respectively. 
Also, we write $\tilde{\mathcal{O}}(b)$ for $\mathcal{O}(b \cdot \text{poly}(\log b))$. Throughout the paper, we define $0^0=1$.

\section{Convergence of NAG under $(L_0,L_1,L_2)$-smoothness for deterministic optimization}
\label{subsection 4.1}
We first present NAG algorithm and its convergence rate in the deterministic case.
The step size setting in Algorithm \ref{algorithm1} slightly differs from that of original NAG \cite{nesterov1983method}, in which the latter directly sets $A_t=B_t$.
%Such a modification could be used to control the gradient norms (or function value gaps) in the analysis.
Such a modification is proposed by \cite{li2024convex} where they obtained the optimal convergence rate under a general smoothness for convex non-stochastic optimizations.
\begin{algorithm}[H]
  \caption{Nesterov's Accelerated Gradient Descent (NAG)}
  \label{algorithm1}
  \begin{algorithmic}[1]
  \State {\bfseries Input:} Horizon $T$, $\xag{0}=\vx_0\in \mathbb{R}^d$, step sizes $\beta, \{\lambda_t\}_{t\in [T]}$ and $A_0=1/\beta, B_0=0$.
  \For{$t=1,\cdots, T$}
  \State $B_t=B_{t-1}+\frac{1}{2}\left(1+\sqrt{4B_{t-1}+1}\right)$;
  \State $A_t=B_t+\frac{1}{\beta}$;
  \State $\xmd{t}=\frac{A_{t-1}}{A_t}\xag{t-1}+\left(1-\frac{A_{t-1}}{A_t}\right)\vx_{t-1}$;
  \State $\vx_t=\vx_{t-1}-\lambda_t \nablaf{t}{md}$;
  \State $\xag{t}=\xmd{t}-\beta\nablaf{t}{md}$.
  \EndFor
  \end{algorithmic}
\end{algorithm}
To better understand the NAG method, we provide the following lemma summarized from \cite{d2021acceleration,li2024convex}.
\begin{lemma}
  \label{lemma 2.1}
  For all $0\leq t\leq T$, we have 
  \begin{enumerate}
    \item $\frac{1}{4}t^2\leq B_t\leq t^2$;
    \item $\left(A_t-A_{t-1}\right)^2=\left(B_t-B_{t-1}\right)^2=B_t < A_t$;
    \item $A_t-A_{t-1}=B_t-B_{t-1}\geq 1$. Thus, $\left\{A_t\right\}_{t\in[T]}$ and $\left\{B_t\right\}_{t\in[T]}$ are both monotonically increasing sequences.
  \end{enumerate}
\end{lemma}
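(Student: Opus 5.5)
The plan is to work directly from the recursion $B_t=B_{t-1}+\frac12\bigl(1+\sqrt{4B_{t-1}+1}\bigr)$ with $B_0=0$, together with $A_t=B_t+1/\beta$. Since $A_t-B_t$ is the constant $1/\beta$ for every $t$ (including $t=0$, where $A_0=1/\beta$, $B_0=0$), we have $A_t-A_{t-1}=B_t-B_{t-1}$ throughout. Every claim therefore reduces to a statement about $B_t$ alone. Set $d_t:=B_t-B_{t-1}=\frac12\bigl(1+\sqrt{4B_{t-1}+1}\bigr)$ for $t\ge1$.

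First, item 2. Note that $d_t$ is the positive root of $x^2-x-B_{t-1}=0$: indeed $\bigl(\frac{1+s}{2}\bigr)^2-\frac{1+s}{2}=\frac{s^2-1}{4}$, and with $s=\sqrt{4B_{t-1}+1}$ this equals $B_{t-1}$. Hence $d_t^2=d_t+B_{t-1}=B_t$. The strict inequality $B_t<A_t$ is immediate from $1/\beta>0$. (At $t=0$ the difference is not defined, so the identity is understood for $t\ge1$.)

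Next, item 3. Since $B_{t-1}\ge0$, we get $d_t\ge\frac12(1+1)=1$. Induction gives $B_t\ge B_{t-1}+1>B_{t-1}\ge 0$, so both sequences are strictly increasing, and $B_{t-1}\ge0$ is maintained along the way.

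Finally, item 1, by induction on the sharper statement $\frac{t}{2}\le\sqrt{B_t}\le t$. Item 2 gives $\sqrt{B_t}=d_t$, so $d_t\ge\frac12\bigl(1+2\sqrt{B_{t-1}}\bigr)=\frac12+\sqrt{B_{t-1}}$. For the upper side, use $\sqrt{4B_{t-1}+1}\le 2\sqrt{B_{t-1}}+1$, which gives $d_t\le 1+\sqrt{B_{t-1}}\le 1+(t-1)=t$. For the lower side, $d_t\ge \frac12+\frac{t-1}{2}=\frac t2$. Squaring yields $\frac14t^2\le B_t\le t^2$, and $t=0$ is trivial.

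The only mildly delicate point is recognizing $d_t$ as the root of $x^2-x-B_{t-1}$, i.e.\ that $d_t^2=B_t$. Everything else is an elementary induction.
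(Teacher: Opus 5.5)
Your proof is correct and follows the paper's argument: items 2 and 3 are the same computation (expanding $d_t^2$ to get $B_{t-1}+d_t=B_t$, and $d_t\ge 1$ from $B_{t-1}\ge 0$). The only difference is cosmetic and lies in item 1. You use item 2 to induct on $\sqrt{B_t}$ via $\sqrt{B_{t-1}}+\frac12\le\sqrt{B_t}\le\sqrt{B_{t-1}}+1$, whereas the paper substitutes $\frac14 t^2\le B_t\le t^2$ directly into the recursion; both are equally elementary.
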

The above lemma plays vital roles both in the induction argument for bounding the function value gap and in the final convergence analysis.
Refer to Appendix \ref{section omitted proof} for the complete proof.
\subsection{Convergence results}
In this subsection, we provide the theoretical convergence behaviour of Algorithm \ref{algorithm1} under $(L_0,L_1,L_2)$-smoothness condition.
\begin{theorem}
  \label{theorem 1}
  Let $T>0$ and $f$ be an $\left(L_0,L_1,L_2\right)$-smooth convex function. Suppose that$\left\{\xag{t}\right\}_{t\in[T]}$ is a sequence generated by \Algref{algorithm1} with step sizes $\beta, \lambda_t$ satisfying
  \begin{align}
  	 \label{betaAlg1}
    \beta=\frac{1}{\LFone}, \qquad \lambda_t=\left(A_t-A_{t-1}\right)\beta,
  \end{align}
  where $\LFone$ is a constant, depending on the smoothness parameters $\left\{L_i\right\}_{i\in[3]}, p, q$, with its explicit expression in (\ref{L1}).
  Then, under Assumption \ref{assumption 1}, we have\footnote{We state the explicit convergence result in (\ref{Complete theorem 1}).}
  \begin{align}
    \fxag{T}-f^*\leq \mathcal{O}\left(\frac{1}{T^2}\right).
  \end{align}
\end{theorem}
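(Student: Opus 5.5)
The plan is a Lyapunov (potential) argument by induction on $t$, in the style of \cite{li2024convex}. The potential is
\[
\mathcal{E}_t := A_t\triangleag{t}+\frac{1}{2\beta}\cdot\beta^2\cdot\frac{1}{\beta^2}\norm{\vx_t-\vx^*}^2 .
\]
Concretely, I use the form $\gapdistance{t}{t}$ up to the constant in front of the distance term, with $\triangleag{t}:=\functionvaluegap{t}{ag}$. I will show by induction that $\mathcal{E}_t\le \mathcal{E}_0 = \frac{1}{\beta}\triangleag{0}+c\norm{\vx_0-\vx^*}^2$. Since $A_T\ge B_T\ge T^2/4$ by Lemma \ref{lemma 2.1}, this gives $\triangleag{T}\le 4\mathcal{E}_0/T^2=\mathcal{O}(1/T^2)$.

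The obstacle is that $(L_0,L_1,L_2)$-smoothness is only local. It yields a descent inequality
\[
f(\vy)\le f(\vx)+\la\nabla f(\vx),\vy-\vx\ra+\tfrac{L}{2}\norm{\vy-\vx}^2,
\]
with an effective constant $L=\Ldescentlemma$, valid only when $\norm{\vy-\vx}\le\min\{1/L_1,1/L_2\}$ and when $\norm{\nabla f(\vx)}$ and $f(\vx)-f^*$ are bounded. I would first prove a local descent lemma. Integrating the smoothness bound along the segment gives this inequality at a point $\vx$ with $f(\vx)-f^*\le \Ftwo$. I would also prove a gradient bound for convex $(L_0,L_1,L_2)$-smooth functions: $\norm{\nabla f(\vx)}^2\le g(f(\vx)-f^*)$ for an explicit increasing function $g$. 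The approach is to apply the descent inequality at $\vy=\vx-\eta\nabla f(\vx)$ with small $\eta$ and use $f(\vy)\ge f^*$. The condition $p<2$ is what makes the resulting inequality $\norm{\nabla f}^2\lesssim (L_0+L_1\norm{\nabla f}^p+L_2\triangle^q)\triangle$ solvable for $\norm{\nabla f}$. These facts fix the constant $\LFone$ in (\ref{L1}): it is chosen larger than the local constant $L$ on the sublevel set $\{f-f^*\le \Fone\}$, with $\Fone$ determined by $\mathcal{E}_0$.

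The induction runs as follows. Assume $\mathcal{E}_{s}\le\mathcal{E}_0$ for all $s<t$. Then $\triangleag{t-1}\le \mathcal{E}_0/A_{t-1}\le \beta\mathcal{E}_0$ and $\norm{\vx_{t-1}-\vx^*}^2\lesssim\mathcal{E}_0$.

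First, bound $f(\xmd{t})-f^*$. Since $\xmd{t}$ is a convex combination of $\xag{t-1}$ and $\vx_{t-1}$, convexity gives
\[
f(\xmd{t})\le \tfrac{A_{t-1}}{A_t}f(\xag{t-1})+(1-\tfrac{A_{t-1}}{A_t})f(\vx_{t-1}).
\]
To control $f(\vx_{t-1})$ I would not bound it directly. Instead I would use that $1-A_{t-1}/A_t=(A_t-A_{t-1})/A_t\le \sqrt{B_t}/A_t\le\sqrt{\beta}$ is small, together with a bound on $f$ at distance $\mathcal{O}(\sqrt{\mathcal{E}_0})$ from $\vx^*$. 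That bound is obtained by chaining the local descent lemma along segments of length $\le 1/\max(L_1,L_2)$ starting at $\vx^*$, where $\nabla f=0$. This is the delicate step. The alternative I would try if it fails is the trick of \cite{li2024convex}: the shift $A_t=B_t+1/\beta$ guarantees $A_t-A_{t-1}\le \sqrt{A_t}$ with $A_t\ge1/\beta$. This keeps $\norm{\xmd{t}-\xag{t-1}}$ of order $\sqrt{\beta}$ times the distance, so $f(\xmd{t})-f^*\le\Ftwo$ follows from one local-smoothness step from $\xag{t-1}$.

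Second, with $f(\xmd{t})-f^*\le\Ftwo$, the gradient bound gives $\beta\norm{\nablaf{t}{md}}\le \beta\sqrt{g(\Ftwo)}\le\min\{1/L_1,1/L_2\}$ once $\LFone$ is large enough. So the descent lemma applies to the step $\xag{t}=\xmd{t}-\beta\nablaf{t}{md}$ and gives $f(\xag{t})\le f(\xmd{t})-\frac{\beta}{2}\norm{\nablaf{t}{md}}^2$, using $\beta L\le1$.

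Third, run the standard estimate-sequence computation. Combine convexity at $\xmd{t}$ against $\xag{t-1}$ (weight $A_{t-1}$) and against $\vx^*$ (weight $A_t-A_{t-1}$). Expand $\norm{\vx_t-\vx^*}^2$ with $\lambda_t=(A_t-A_{t-1})\beta$, and use $(A_t-A_{t-1})^2=B_t\le A_t$ from Lemma \ref{lemma 2.1}. The quadratic gradient terms then cancel against the descent gain, giving $\mathcal{E}_t\le\mathcal{E}_{t-1}\le\mathcal{E}_0$ and closing the induction.

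I expect the first step, keeping $\xmd{t}$ inside the controlled sublevel set uniformly in $t$, to be the main difficulty. It is the step that dictates the explicit form of $\LFone$.
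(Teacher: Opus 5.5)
Your architecture matches the paper's: the potential $A_t\triangleag{t}+\frac{1}{2\beta}\norm{\vx_t-\vx^*}^2$, an induction keeping $f(\xmd{t})-f^*$ in a fixed sublevel set, the local descent lemma with the gradient bound $g$, and the estimate-sequence cancellation using $(A_t-A_{t-1})^2=B_t<A_t$ (Lemma \ref{lemma C.2}). The problem is Step 1.

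Your primary route is Jensen on $\xmd{t}$ plus chaining the local descent lemma from $\vx^*$ to $\vx_{t-1}$, using $\norm{\vx_{t-1}-\vx^*}^2\le 2\mathcal{C}_1$. It is logically sound, provided the $\beta$-free quantity you carry is $\beta\mathcal{E}_0=\mathcal{C}_1$. As written, ``$\Fone$ determined by $\mathcal{E}_0$'' is circular, since $\mathcal{E}_0=\mathcal{C}_1/\beta$ and $\beta=1/\LFone$. The real issue is that chaining is exponentially lossy. Because $g(\mu)\ge 8(L_1+L_2)^2\mu^2$, each segment of length $r$ multiplies the running gap bound by at least $1+2\sqrt{2}\,r(L_1+L_2)$. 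So the sublevel bound you obtain, and the $\LFone$ it forces, are in general exponential in $\sqrt{\mathcal{C}_1}(L_1+L_2)$. That gives an $\mathcal{O}(1/T^2)$ bound for a much smaller step size. It does not prove the theorem with $\LFone$ from (\ref{L1}), whose $\Fone$ in (\ref{Fone}) is only a single local step beyond $\mathcal{C}_1$.

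Your fallback is the paper's route, but it omits the two ingredients that make it work.
\begin{enumerate}
\item We have $\xmd{t}-\xag{t-1}=\bigl(1-\frac{A_{t-1}}{A_t}\bigr)(\vx_{t-1}-\xag{t-1})$. The potential bounds $\norm{\vx_{t-1}-\vx^*}$ and $\triangleag{t-1}$, but not $\norm{\vx_{t-1}-\xag{t-1}}$, so ``$\sqrt{\beta}$ times the distance'' is not available. The paper unrolls $\xag{k}-\vx_k=\frac{1}{A_k}\sum_{i\le k}A_i(\lambda_i-\beta)\nablaf{i}{md}$ and applies Jensen with weights $A_i-A_{i-1}$ (Lemma \ref{lemma B.1}). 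It then uses $A_i-A_{i-1}=\sqrt{B_i}\le\sqrt{\beta}A_i$ to get $\norm{\xmd{t+1}-\xag{t}}^2\le\beta^{5/2}\sum_{i\le t}A_i\norm{\nablaf{i}{md}}^2$.
\item That bound needs control of an $A_i$-weighted gradient sum, which requires slack in the descent step. With $L\beta\le\frac12$ (the factor $2$ in (\ref{L1})), each step of the potential keeps $-\frac14\beta A_l\norm{\nablaf{l}{md}}^2$. This gives $\norm{\xmd{t+1}-\xag{t}}^2\le 4\sqrt{\beta}\,\mathcal{C}_1\le(L_1+L_2)^{-2}$, which is where the $8\mathcal{C}_1^2(L_1+L_2)^4$ term in $\LFone$ comes from. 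With your choice $\beta L\le1$, the leftover after multiplying by $A_l$ is only $-\frac12\norm{\nablaf{l}{md}}^2$. That unweighted term does not control the weighted sum.
\end{enumerate}
With these two ingredients in place, a single local-smoothness step from $\xag{t}$, where $\triangleag{t}\le\mathcal{C}_1$, gives $f(\xmd{t+1})-f^*\le\Fone$ and closes the induction.
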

Considering the definition of $\LFone$ in (\ref{L1}), $\beta$ could reduce to $1/2L$ when the objective function is $L$-smooth, which aligns with $\beta=1/L$ in \cite{nesterov1983method} up to a constant.
Furthermore, Theorem \ref{theorem 1} recovers the convergence rate of order $\mathcal{O}\left(1/T^2\right)$ obtained in \cite{nesterov1983method} where the smoothness is required.
This bound is optimal \cite{nemirovskij1983problem} for smooth convex optimization when $d$ is large enough.
%Since the new generalized smoothness in Definition \ref{definition 1} is weaker than the global smoothness, we believe that the convergence rate in \ref{theorem 1} cannot be improved.
\subsection{Proof of Theorem \ref{theorem 1}}
\label{proof of theorem 1}
We first present the explicit expressions of $\mathcal{C}_1$, $\Fone$, $\LFone$,
  \begin{align}
      \label{C1}
      \mathcal{C}_1=\triangleag{0}+\frac{1}{2}\norm{\vx_0-\vx^*}^2,
    \end{align}
    \begin{align}
    \label{Fone}
    \Fone=\mathcal{C}_1+ \frac{1}{L_1+L_2}\sqrt{g(\mathcal{C}_1)}+\frac{L_0+L_1\left(g(\mathcal{C}_1)\right)^{\frac{p}{2}}+L_2\mathcal{C}_1^q}{2\left(L_1+L_2\right)^2},
  \end{align}
    \begin{align}
    \label{L1}
    \LFone=\Ldeterministic,
  \end{align}
  where $g\left(\cdot\right): \mathbb{R}\rightarrow\mathbb{R}$ is a function defined in (\ref{g function}).
Then, we will provide some useful lemmas. 
Lemma \ref{lemma B.1} follows from the analysis in \cite{ghadimi2016accelerated} and is derived from the iteration steps in Algorithm \ref{algorithm1}.
\begin{lemma}
  \label{lemma B.1}
  Let $\left\{\xmd{k}\right\}_{k\in[T]}$ and $\left\{\xag{k}\right\}_{k\in[T]}$ be the two sequences generated by Algorithm \ref{algorithm1}. Then we have for all $k\in[T]$,
  \begin{align*}
      \norm{\xmd{k}-\xag{k-1}}^2\leq \frac{1}{A_k\cdot A_{k-1}}\sumlimits{i=1}{k-1} \frac{A_i^2\cdot \left(\lambda_i-\beta\right)^2}{A_i-A_{i-1}}\norm{\nablaf{i}{md}}^2.
  \end{align*}
\end{lemma}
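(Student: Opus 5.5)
We prove Lemma \ref{lemma B.1} by writing $\xmd{k}-\xag{k-1}$ as a weighted sum of past gradient steps and then applying a weighted Cauchy--Schwarz inequality.

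\textbf{Step 1: reduce to $\vx_{k-1}-\xag{k-1}$.} From the definition of $\xmd{k}$ in Algorithm \ref{algorithm1},
\[
\xmd{k}-\xag{k-1}=\Bigl(1-\tfrac{A_{k-1}}{A_k}\Bigr)\left(\vx_{k-1}-\xag{k-1}\right)=\tfrac{A_k-A_{k-1}}{A_k}\left(\vx_{k-1}-\xag{k-1}\right).
\]

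\textbf{Step 2: recursion for $\vx_t-\xag{t}$.} Using the updates $\vx_t=\vx_{t-1}-\lambda_t\nablaf{t}{md}$ and $\xag{t}=\xmd{t}-\beta\nablaf{t}{md}$,
\[
\vx_t-\xag{t}=\vx_{t-1}-\xmd{t}-(\lambda_t-\beta)\nablaf{t}{md}=\tfrac{A_{t-1}}{A_t}\left(\vx_{t-1}-\xag{t-1}\right)-(\lambda_t-\beta)\nablaf{t}{md}.
\]
Multiplying by $A_t$ gives
\[
A_t\left(\vx_t-\xag{t}\right)=A_{t-1}\left(\vx_{t-1}-\xag{t-1}\right)-A_t(\lambda_t-\beta)\nablaf{t}{md}.
\]
Since $\vx_0=\xag{0}$, telescoping yields
\[
A_{k-1}\left(\vx_{k-1}-\xag{k-1}\right)=-\sum_{i=1}^{k-1}A_i(\lambda_i-\beta)\nablaf{i}{md}.
\]
Combining with Step 1,
\[
\xmd{k}-\xag{k-1}=-\tfrac{A_k-A_{k-1}}{A_kA_{k-1}}\sum_{i=1}^{k-1}A_i(\lambda_i-\beta)\nablaf{i}{md}.
\]

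\textbf{Step 3: weighted Cauchy--Schwarz.} Write each summand as $\sqrt{A_i-A_{i-1}}\cdot\frac{A_i(\lambda_i-\beta)}{\sqrt{A_i-A_{i-1}}}\nablaf{i}{md}$. Applying Cauchy--Schwarz (the weights $A_i-A_{i-1}$ are positive by Lemma \ref{lemma 2.1}) gives
\[
\norm{\sum_{i=1}^{k-1}A_i(\lambda_i-\beta)\nablaf{i}{md}}^2\le\left(\sum_{i=1}^{k-1}(A_i-A_{i-1})\right)\sum_{i=1}^{k-1}\frac{A_i^2(\lambda_i-\beta)^2}{A_i-A_{i-1}}\norm{\nablaf{i}{md}}^2.
\]
The first factor telescopes to $A_{k-1}-A_0\le A_{k-1}$. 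Hence
\[
\norm{\xmd{k}-\xag{k-1}}^2\le\frac{(A_k-A_{k-1})^2}{A_k^2A_{k-1}^2}\,A_{k-1}\sum_{i=1}^{k-1}\frac{A_i^2(\lambda_i-\beta)^2}{A_i-A_{i-1}}\norm{\nablaf{i}{md}}^2.
\]

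\textbf{Step 4: simplify the prefactor.} By Lemma \ref{lemma 2.1}, $(A_k-A_{k-1})^2=B_k<A_k$. Therefore the prefactor satisfies $\frac{(A_k-A_{k-1})^2}{A_k^2A_{k-1}}\le\frac{1}{A_kA_{k-1}}$, which is exactly the claimed bound.

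For $k=1$ the sum is empty and $\xmd{1}=\xag{0}$, since $\vx_0=\xag{0}$, so the claim holds trivially. The only non-routine step is Step 3: the Cauchy--Schwarz weights must be chosen as $A_i-A_{i-1}$ so that they telescope, after which Lemma \ref{lemma 2.1} absorbs the squared increment in Step 4.
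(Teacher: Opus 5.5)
Your proof is correct and follows essentially the same route as the paper. Both arguments unroll $A_t(\vx_t-\xag{t})$ using $\xag{0}=\vx_0$, then use $\xmd{k}-\xag{k-1}=(1-A_{k-1}/A_k)(\vx_{k-1}-\xag{k-1})$, apply Cauchy--Schwarz (the paper calls it Jensen) with weights $A_i-A_{i-1}$ summing to $A_{k-1}-A_0$, and absorb the prefactor via $(A_k-A_{k-1})^2=B_k<A_k$ from Lemma~\ref{lemma 2.1}. The only cosmetic difference is that the paper first takes norms via the triangle inequality and applies Jensen to the resulting scalar sum, while you apply the weighted Cauchy--Schwarz directly to the vector sum.
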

\begin{proof}
  From Algorithm \ref{algorithm1}, we have
  \begin{align*}
    \xag{k}-\vx_{k}= & \xmd{k}-\beta\nablaf{k}{md}-\vx_{k-1}+\lambda_k\nablaf{k}{md}\notag\\
    = & \frac{A_{k-1}}{A_k}\left(\xag{k-1}-\vx_{k-1}\right)+\left(\lambda_k-\beta\right)\nablaf{k}{md}.
  \end{align*}
Since $\xag{0}=\vx_0$, we obtain that
\begin{align*}
  \xag{k}-\vx_{k}= & \frac{1}{A_k}\sumlimits{i=1}{k}A_i\left(\lambda_i-\beta\right)\nablaf{i}{md},
\end{align*}
which implies
\begin{align}
  \label{eq 16.1}
  \norm{\xag{k}-\vx_{k}} \leq & \frac{1}{A_k}\sumlimits{i=1}{k}A_i\left|\lambda_i-\beta\right| \cdot \norm{\nablaf{i}{md}}.
\end{align}
Applying the iteration step in Algorithm \ref{algorithm1} again, we have
 \begin{align*}
  \xmd{k}-\xag{k-1}=\left(1-\frac{A_{k-1}}{A_k}\right)\left(\vx_{k-1}-\xag{k-1}\right).
 \end{align*}
Combining with (\ref{eq 16.1}), we have
  \begin{align}
      \label{1.4}
      \norm{\xmd{k}-\xag{k-1}}\leq \left(1-\frac{A_{k-1}}{A_k}\right) \cdot \frac{1}{A_{k-1}}\sumlimits{i=1}{k-1} A_i\left|\lambda_i-\beta\right| \cdot \norm{\nablaf{i}{md}}.
  \end{align}
 Using the fact that 
  \begin{align*}
    \sumlimits{i=1}{k-1} A_i \cdot \left(1-\frac{A_{i-1}}{A_i}\right) = A_{k-1}-A_0,
  \end{align*}
  we have
  \begin{align*}
    & \norm{\xmd{k}-\xag{k-1}}^2\notag\\
    \leq &\left(1-\frac{A_{k-1}}{A_k}\right)^2 \cdot \frac{1}{A_{k-1}^2}\left[\sumlimits{i=1}{k-1}A_i\left(1-\frac{A_{i-1}}{A_i}\right) \cdot \frac{\left|\lambda_i-\beta\right|}{1-\frac{A_{i-1}}{A_i}} \norm{\nablaf{i}{md}}\right]^2\notag\\
    \leq & \left(1-\frac{A_{k-1}}{A_k}\right)^2  \cdot \frac{A_{k-1}-A_0}{A_{k-1}^2}\sumlimits{i=1}{k-1}A_i\left(1-\frac{A_{i-1}}{A_i}\right) \frac{A_i^2\cdot \left(\lambda_i-\beta\right)^2}{\left(A_i-A_{i-1}\right)^2}\norm{\nablaf{i}{md}}^2\notag\\
    \leq & \frac{1}{A_k\cdot A_{k-1}}\sumlimits{i=1}{k-1} \frac{A_i^2\cdot \left(\lambda_i-\beta\right)^2}{A_i-A_{i-1}}\norm{\nablaf{i}{md}}^2,
  \end{align*}
  where the second inequality follows from Jensen's inequality and the last inequality holds since Lemma \ref{lemma 2.1}.
\end{proof}
In the next Lemma, we assume the function value gap is bounded. Under this assumption, the analysis for $\left(L_0,L_1,L_2\right)$-smooth and $L$ smooth objective functions becomes similar. 
With reference to \cite{nesterov1983method,ghadimi2016accelerated,d2021acceleration}, we provide the following analysis with a step size specific to the novel smoothness condition.
\begin{lemma}
  \label{lemma C.2}
  Suppose that $\fxmd{l}-f^*\leq \Fone, \forall l\in[t]$. 
  Then, under the conditions of Theorem \ref{theorem 1}, we have
    \begin{align*}
       \triangleag{t}\leq \frac{\mathcal{C}_1}{A_t\beta},
    \end{align*}
    where  $\mathcal{C}_1$ is a constant related to the initial point and is defined in (\ref{C1}).
\end{lemma}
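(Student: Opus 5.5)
The plan is a standard Lyapunov (potential) argument. Define
\begin{align*}
\Phi_k := A_k\triangleag{k}+\frac{1}{2\beta}\norm{\vx_k-\vx^*}^2 .
\end{align*}
Since $A_0=1/\beta$ and $\xag{0}=\vx_0$, the definition (\ref{C1}) gives $\Phi_0=\frac{1}{\beta}\big(\triangleag{0}+\frac12\norm{\vx_0-\vx^*}^2\big)=\mathcal{C}_1/\beta$. I will show $\Phi_k\le\Phi_{k-1}$ for every $k\in[t]$. Then $A_t\triangleag{t}\le\Phi_t\le\mathcal{C}_1/\beta$, which is the claim.

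\textbf{Step 1 (local descent at $\xmd{k}$).} By the hypothesis $\fxmd{k}-f^*\le\Fone$, and by the property of $g$ in (\ref{g function}) that bounds $\norm{\nabla f}^2$ by $g$ of the function-value gap, I get $\norm{\nablaf{k}{md}}\le\sqrt{g(\Fone)}$. The form of $\LFone$ in (\ref{L1}) gives two consequences.
\begin{itemize}
\item[(i)] $\beta\norm{\nablaf{k}{md}}\le \sqrt{g(\Fone)}/\LFone\le \min\{1/L_1,1/L_2\}$, because $\LFone\ge 2(L_1+L_2)\sqrt{g(\Fone)}$. So every point on the segment from $\xmd{k}$ to $\xag{k}$ is within the radius where Definition \ref{definition 1} applies with base point $\vx=\xmd{k}$.
\item[(ii)] The local Lipschitz constant $L_{loc}=L_0+L_1 g(\Fone)^{p/2}+L_2\Fone^q$ satisfies $\beta\le 1/L_{loc}$.
\end{itemize}
Integrating the gradient along the segment with these two facts gives
\begin{align*}
\fxag{k}\le \fxmd{k}-\frac{\beta}{2}\norm{\nablaf{k}{md}}^2 .
\end{align*}
I expect this step to be the main obstacle. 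Everything else is the classical Lipschitz-smooth computation; this is the only place where the specific constants $\Fone$, $g$, $\LFone$ and the $\min\{1/L_1,1/L_2\}$ radius must match up.

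\textbf{Step 2 (convexity).} From Algorithm \ref{algorithm1}, $A_k\xmd{k}=A_{k-1}\xag{k-1}+(A_k-A_{k-1})\vx_{k-1}$. Apply convexity at $\xmd{k}$ twice: against $\xag{k-1}$ with weight $A_{k-1}$, and against $\vx^*$ with weight $A_k-A_{k-1}$. Combined with Step 1, this gives
\begin{align*}
A_k\triangleag{k}-A_{k-1}\triangleag{k-1}\le (A_k-A_{k-1})\la\nablaf{k}{md},\vx_{k-1}-\vx^*\ra-\frac{A_k\beta}{2}\norm{\nablaf{k}{md}}^2 .
\end{align*}

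\textbf{Step 3 (distance recursion and cancellation).} Expand $\vx_k=\vx_{k-1}-\lambda_k\nablaf{k}{md}$ with $\lambda_k=(A_k-A_{k-1})\beta$:
\begin{align*}
\frac{1}{2\beta}\norm{\vx_k-\vx^*}^2=\frac{1}{2\beta}\norm{\vx_{k-1}-\vx^*}^2-(A_k-A_{k-1})\la\nablaf{k}{md},\vx_{k-1}-\vx^*\ra+\frac{\beta}{2}(A_k-A_{k-1})^2\norm{\nablaf{k}{md}}^2 .
\end{align*}
Adding this to Step 2, the inner products cancel, and
\begin{align*}
\Phi_k-\Phi_{k-1}\le\frac{\beta}{2}\big((A_k-A_{k-1})^2-A_k\big)\norm{\nablaf{k}{md}}^2 .
\end{align*}
The right-hand side is at most $0$, since Lemma \ref{lemma 2.1}(2) gives $(A_k-A_{k-1})^2=B_k<A_k$. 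Telescoping over $k=1,\dots,t$ finishes the proof. Lemma \ref{lemma B.1} is not needed here; I expect it is used later, in the induction that verifies the hypothesis $\fxmd{l}-f^*\le\Fone$.
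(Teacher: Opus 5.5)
Your proposal is correct and follows the paper's proof essentially step for step. It uses the same potential $A_l\triangleag{l}+\frac{1}{2\beta}\norm{\vx_l-\vx^*}^2$, applies Lemma \ref{lemma A.2} on the segment from $\xmd{l}$ to $\xag{l}$ (justified by $\beta\sqrt{g(\Fone)}\le\min\{1/L_1,1/L_2\}$), and then combines convexity as in (\ref{1.8}), the distance identity (\ref{eq 9}), and Lemma \ref{lemma 2.1} to make the gradient coefficient nonpositive before telescoping.

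The only difference is bookkeeping. You use $\beta\left(L_0+L_1 g(\Fone)^{p/2}+L_2\Fone^q\right)\le 1$, which leaves a per-step decrease of only $\frac12\norm{\nablaf{l}{md}}^2$. The paper uses the available bound $\le\frac12$ to keep the slack $-\frac14\beta A_l\norm{\nablaf{l}{md}}^2$ in (\ref{eq 13}), which it needs later in Lemma \ref{lemma C.3.1} via (\ref{eq 17}); this makes no difference for the present lemma.
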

\begin{proof}
  By Corollary \ref{corollary 1} and the assumption that $\trianglemd{l}\leq \Fone, \forall l\in[t]$, we have $\norm{\nablaf{l}{md}}^2\leq g(\Fone), \forall l\in[t]$.
    Therefore,
    \begin{align*}
        \norm{\vx_l^{ag}-\vx_l^{md}}=\beta \norm{\nablaf{l}{md}}\leq \beta \sqrt{g(\Fone)}\leq \min\left\{1/L_1,1/L_2\right\},
    \end{align*}
    where the last inequality holds since $\beta=\frac{1}{\mathcal{L}_1}$ with $\mathcal{L}_1$ defined in (\ref{L1}).
    By Lemma \ref{lemma A.2}, we have 
    \begin{align}
      \label{eq 5}
       \fxag{l}\leq & \fxmd{l}+\la \nablaf{l}{md},\xag{l}-\xmd{l}\ra+\frac{L_0+L_1 \left(g\left(\Fone\right)\right)^\frac{p}{2}+L_2 \Fone^q}{2}\norm{\xag{l}-\xmd{l}}^2\notag\\
        = & \fxmd{l}-\beta\norm{\nablaf{l}{md}}^2+\frac{L_0+L_1 \left(g\left(\Fone\right)\right)^\frac{p}{2}+L_2 \Fone^q}{2}\beta^2\norm{\nablaf{l}{md}}^2.
    \end{align}
    By the convexity and the iteration step in Algorithm \ref{algorithm1}, we have
    \begin{align}
        \label{1.8}
        &\fxmd{l}-\left[\frac{A_{l-1}}{A_l}\cdot \fxag{l-1}+\left(1-\frac{A_{l-1}}{A_l}\right)\cdot f^*\right]\notag\\
        = & \left(1-\frac{A_{l-1}}{A_l}\right)\cdot \left[\fxmd{l}-f^*\right]+\frac{A_{l-1}}{A_l}\cdot\left[\fxmd{l}-\fxag{l-1}\right]\notag\\
        \leq & \left(1-\frac{A_{l-1}}{A_l}\right)\cdot \la \nablaf{l}{md}, \vx_l^{md}-\vx^*\ra +\frac{A_{l-1}}{A_l}\cdot\la \nablaf{l}{md}, \vx_l^{md}-\vx_{l-1}^{ag}\ra\notag\\
        = & \la \nablaf{l}{md}, \left(1-\frac{A_{l-1}}{A_l}\right)\cdot\left(\vx_l^{md}-\vx^*\right)+\frac{A_{l-1}}{A_l}\cdot\left(\vx_l^{md}-\vx_{l-1}^{ag}\right)\ra\notag\\
        = & \left(1-\frac{A_{l-1}}{A_l}\right)\cdot \la \nablaf{l}{md},\vx_{l-1}-\vx^*\ra. 
    \end{align}
    Combining (\ref{eq 5}) and (\ref{1.8}), we obtain that
    \begin{align}
      \label{eq 7}
        \fxag{l} \leq & \frac{A_{l-1}}{A_l}\cdot \fxag{l-1}+\left(1-\frac{A_{l-1}}{A_l}\right)\cdot f^*+\left(1-\frac{A_{l-1}}{A_l}\right)\cdot \la \nablaf{l}{md},\vx_{l-1}-\vx^*\ra\notag\\
        &-\beta\norm{\nablaf{l}{md}}^2+\frac{L_0+L_1 \left(g\left(\Fone\right)\right)^\frac{p}{2}+L_2 \Fone^q}{2}\beta^2\norm{\nablaf{l}{md}}^2.
    \end{align}
    Also, 
    \begin{align*}
        &\norm{\vx_{l-1}-\vx^*}^2-2\lambda_l \la \nablaf{l}{md},\vx_{l-1}-\vx^*\ra+\lambda_l^2 \norm{\nablaf{l}{md}}^2\notag\\
        =&\norm{\vx_{l-1}-\lambda_l \nablaf{l}{md}-\vx^*}^2=\norm{\vx_l-\vx^*}^2.
    \end{align*}
    Hence, 
    \begin{align}
        \label{eq 9}
        \la \nablaf{l}{md},\vx_{l-1}-\vx^*\ra=\frac{1}{2\lambda_l}\left[\norm{\vx_{l-1}-\vx^*}^2-\norm{\vx_l-\vx^*}^2\right]+\frac{\lambda_l}{2}\norm{\nablaf{l}{md}}^2.
    \end{align}
    Substituting (\ref{eq 9}) into (\ref{eq 7}), we have
    \begin{align}
      \label{eq 10}
        \fxag{l}\leq &\frac{A_{l-1}}{A_l}\cdot \fxag{l-1}+\left(1-\frac{A_{l-1}}{A_l}\right)\cdot f^*+\frac{A_l-A_{l-1}}{2A_l\cdot\lambda_l}\left[\norm{\vx_{l-1}-\vx^*}^2-\norm{\vx_l-\vx^*}^2\right]\notag\\
        & -\beta\left(1-\frac{\left(L_0+L_1 \left(g\left(\Fone\right)\right)^\frac{p}{2}+L_2 \Fone^q\right)\beta}{2}-\frac{\lambda_l\left(A_l-A_{l-1}\right)}{2\beta A_l}\right)\norm{\nablaf{l}{md}}^2.
    \end{align}
    By the constraint of $\lambda_l$ in (\ref{betaAlg1}) and applying Lemma \ref{lemma 2.1}, we obtain that
    \begin{align*}
      \lambda_l\cdot \frac{A_l-A_{l-1}}{A_l}=\beta\cdot \frac{\left(A_l-A_{l-1}\right)^2}{A_l}=\beta\frac{\left(B_l-B_{l-1}\right)^2}{B_{l}+1/\beta}=\beta\frac{B_l}{B_l+1/\beta}< \beta.
    \end{align*} 
   Also, recalling the constraint of $\beta$ in (\ref{betaAlg1}), we have
    \begin{align*}
      \left(L_0+L_1 \left(g\left(\Fone\right)\right)^\frac{p}{2}+L_2 \Fone^q\right)\beta\leq \frac{1}{2}.
    \end{align*}
    Therefore,
    \begin{align*}
      1-\frac{\left(L_0+L_1 \left(g\left(\Fone\right)\right)^\frac{p}{2}+L_2 \Fone^q\right)\beta}{2}-\frac{\lambda_l\left(A_l-A_{l-1}\right)}{2\beta A_l}\geq \frac{1}{4}.
    \end{align*}
    Combining with (\ref{eq 10}) and reorganizing the terms, we have
    \begin{align}
         \triangleag{l} \leq & -\frac{1}{4}\beta \norm{\nablaf{l}{md}}^2 + \frac{A_{l-1}}{A_l}\cdot\triangleag{l-1}+\frac{A_l-A_{l-1}}{2A_l\cdot\left(A_l-A_{l-1}\right)\beta}\left[\norm{\vx_{l-1}-\vx^*}^2-\norm{\vx_l-\vx^*}^2\right]\notag\\
        = & -\frac{1}{4}\beta \norm{\nablaf{l}{md}}^2+\frac{A_{l-1}}{A_l}\cdot\triangleag{l-1}+\frac{1}{2\beta A_l}\left[\norm{\vx_{l-1}-\vx^*}^2-\norm{\vx_l-\vx^*}^2\right].
    \end{align}
    With both sides of the above inequality multiplying $A_l$, we have
    \begin{align}
      \label{eq 13}
        A_l \triangleag{l}+\frac{1}{2\beta}\norm{\vx_l-\vx^*}^2\leq A_{l-1}\triangleag{l-1}+\frac{1}{2\beta}\norm{\vx_{l-1}-\vx^*}^2-\frac{1}{4}\beta A_l \norm{\nablaf{l}{md}}^2.
    \end{align}
    Summing up over $l\in[t]$ and re-arranging the inequality, we obtain that 
    \begin{align}
        \label{5.18}
        \triangleag{t}\leq & \frac{A_0}{A_t}\triangleag{0}+\frac{1}{2\beta A_t}\norm{\vx_0-\vx^*}^2
        =  \frac{1}{A_t\beta}\triangleag{0}+\frac{1}{2\beta A_t}\norm{\vx_0-\vx^*}^2\notag\\
        = & \frac{1}{A_t\beta}\left(\triangleag{0}+\frac{1}{2}\norm{\vx_0-\vx^*}^2\right).
    \end{align}
\end{proof}
Based on the proof for Lemma \ref{lemma C.2}, we will prove the bound of $\fxmd{t}-f^*$ for all $t\in[T]$, using an induction argument.
\begin{lemma}
  \label{lemma C.3.1}
    Under the conditions of Theorem \ref{theorem 1}, we have
    \begin{align*}
      \fxmd{t}-f^*\leq \Fone, \quad \forall t\in[T],
    \end{align*}
    where $\Fone$ is defined in (\ref{Fone}).
\end{lemma}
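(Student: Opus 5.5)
We argue by strong induction on $t\in[T]$, showing that $\fxmd{t}-f^*\leq \Fone$ holds for every $t$.

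\textbf{Base case $t=1$.} Since $\xag{0}=\vx_0$, we get $\xmd{1}=\frac{A_0}{A_1}\xag{0}+\left(1-\frac{A_0}{A_1}\right)\vx_0=\vx_0$. Hence $\fxmd{1}-f^*=\triangleag{0}\leq\mathcal{C}_1\leq\Fone$.

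\textbf{Inductive step.} Assume $\fxmd{l}-f^*\leq\Fone$ for all $l\in[t-1]$. We must show the same bound at $l=t$.

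First, by the hypothesis of Lemma \ref{lemma C.2} applied with $t-1$ in place of $t$, we get
\[
\triangleag{t-1}\leq\frac{\mathcal{C}_1}{A_{t-1}\beta}\leq \mathcal{C}_1,
\]
using $A_{t-1}\beta\geq A_0\beta=1$.

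Second, we control the distance $\norm{\xmd{t}-\xag{t-1}}$. Lemma \ref{lemma B.1} gives a bound in terms of the past gradients $\norm{\nablaf{i}{md}}^2$, $i\leq t-1$. By Corollary \ref{corollary 1} and the induction hypothesis, each such gradient satisfies $\norm{\nablaf{i}{md}}^2\leq g(\Fone)$. A cruder route is preferable, however, because it reuses the telescoped inequality (\ref{eq 13}) from the proof of Lemma \ref{lemma C.2}:
\[
\sum_{i\leq t-1}\frac{\beta}{4}A_i\norm{\nablaf{i}{md}}^2\leq \frac{\mathcal{C}_1}{\beta}.
\]
Since $\lambda_i=(A_i-A_{i-1})\beta$, we have
\[
(\lambda_i-\beta)^2/(A_i-A_{i-1})\leq \beta^2(A_i-A_{i-1})\leq\beta^2\sqrt{A_i}.
\]
I expect that bounding $A_i\sqrt{A_i}$ against $A_{t}A_{t-1}$ (both of order $t^4$, versus $i^3\leq t^3$) then yields
\[
\norm{\xmd{t}-\xag{t-1}}^2\leq c\,\beta\,\mathcal{C}_1\leq \frac{8\mathcal{C}_1}{\LFone}
\]
for an absolute constant $c$. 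The term $8\mathcal{C}_1^2(L_1+L_2)^4$ inside $\LFone$ is designed exactly so that this distance is at most $\frac{1}{L_1+L_2}\leq\min\{1/L_1,1/L_2\}$. This puts $\xmd{t}$ within the local-smoothness radius of $\xag{t-1}$.

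Third, we bound $\fxmd{t}$ via Lemma \ref{lemma A.2}, the descent-type inequality under $(L_0,L_1,L_2)$-smoothness, centered at $\xag{t-1}$ with $\triangleag{t-1}\leq\mathcal{C}_1$. Write $D=\norm{\xmd{t}-\xag{t-1}}\leq 1/(L_1+L_2)$. We have
\[
\fxmd{t}-f^*\leq \triangleag{t-1}+\norm{\nabla f(\xag{t-1})}\,D+\frac{L_0+L_1\norm{\nabla f(\xag{t-1})}^p+L_2(\triangleag{t-1})^q}{2}D^2.
\]
Corollary \ref{corollary 1} gives $\norm{\nabla f(\xag{t-1})}\leq\sqrt{g(\mathcal{C}_1)}$. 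Substituting this and $D\leq 1/(L_1+L_2)$ gives exactly
\[
\mathcal{C}_1+\frac{\sqrt{g(\mathcal{C}_1)}}{L_1+L_2}+\frac{L_0+L_1 g(\mathcal{C}_1)^{p/2}+L_2\mathcal{C}_1^q}{2(L_1+L_2)^2}=\Fone,
\]
which closes the induction.

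The main obstacle is the second step: turning Lemma \ref{lemma B.1} into a bound on $\norm{\xmd{t}-\xag{t-1}}$ that is uniform in $t$ and depends only on $\mathcal{C}_1$ and $\beta$. This requires careful use of Lemma \ref{lemma 2.1} ($B_i=(A_i-A_{i-1})^2$, $t^2/4\leq B_t\leq t^2$) together with the weighted gradient sum from (\ref{eq 13}), so that $\beta$ appears linearly and can be absorbed by the choice of $\LFone$. If the weighted-sum approach leaves a stray factor, my fallback is the uniform bound $g(\Fone)$. In that case I would estimate $\sum_{i<t}A_i^2(\lambda_i-\beta)^2/(A_i-A_{i-1})\lesssim\beta^2 t^5$ against $A_tA_{t-1}\gtrsim t^4$, but this grows with $t$. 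For that reason the telescoped energy bound is the version I would pursue.
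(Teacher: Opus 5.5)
Your architecture matches the paper's: induction, $\triangleag{t-1}\le\mathcal{C}_1$ from Lemma~\ref{lemma C.2} and $A_{t-1}\beta\ge 1$, control of $\norm{\xmd{t}-\xag{t-1}}$ via Lemma~\ref{lemma B.1} plus the telescoped form of (\ref{eq 13}), and a final application of Lemma~\ref{lemma A.2} and Corollary~\ref{corollary 1}. Your first and third steps are fine.

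The gap is the second step, which is the only nontrivial estimate in the lemma. You leave it as an expectation, and the bound you expect, $c\,\beta\,\mathcal{C}_1$, is not what these ingredients give. The energy inequality only controls $\sum_{i\le t-1}A_i\norm{\nablaf{i}{md}}^2\le 4\mathcal{C}_1/\beta^2$. So what matters is the coefficient of $A_i\norm{\nablaf{i}{md}}^2$ in Lemma~\ref{lemma B.1}, which is at most $\beta^2A_i(A_i-A_{i-1})/(A_tA_{t-1})\le\beta^2\sqrt{B_i}/A_t$. Since $\sqrt{B_i}\le t$ and $A_t\ge t^2/4+1/\beta\ge t/\sqrt{\beta}$, this coefficient is at most $\beta^{5/2}$. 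For small $\beta$ it is of that order at $i=t-1$, $t\approx 2/\sqrt{\beta}$. Hence the best this route yields is $\norm{\xmd{t}-\xag{t-1}}^2=\mathcal{O}(\sqrt{\beta}\,\mathcal{C}_1)$, not $\mathcal{O}(\beta\,\mathcal{C}_1)$. Your heuristic ``both of order $t^4$, versus $i^3\le t^3$'' also overlooks that for $t\lesssim 1/\sqrt{\beta}$ the offset $1/\beta$ in $A_i=B_i+1/\beta$ dominates. It is precisely this offset that makes the bound uniform in $t$.

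The repair is short. For $i\le t-1$ we have $A_i^2\le A_{t-1}A_t$, and
\[
\frac{(\lambda_i-\beta)^2}{A_i-A_{i-1}}=\beta^2\frac{(A_i-A_{i-1}-1)^2}{A_i-A_{i-1}}\le\beta^2(A_i-A_{i-1})=\beta^2\sqrt{B_i}\le\beta^2\sqrt{A_i}\le\beta^{5/2}A_i,
\]
using $\beta A_i\ge 1$ at the end. With the energy bound this gives $\norm{\xmd{t}-\xag{t-1}}^2\le\beta^{5/2}\cdot 4\mathcal{C}_1/\beta^2=4\sqrt{\beta}\,\mathcal{C}_1$.

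This $\sqrt{\beta}$ scaling is what the quadratic term $8\mathcal{C}_1^2(L_1+L_2)^4$ in $\LFone$ is calibrated for; a bound linear in $\beta$ would only call for a term linear in $\mathcal{C}_1$. Since $\LFone\ge 16\mathcal{C}_1^2(L_1+L_2)^4$, we get $\sqrt{\beta}\le 1/(4\mathcal{C}_1(L_1+L_2)^2)$, hence $\norm{\xmd{t}-\xag{t-1}}\le 1/(L_1+L_2)$. The rest of your argument then goes through as written.
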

\begin{proof}
    It is apparent that $\fxmd{1}-f^*=\fxag{0}-f^*\leq \Fone$ since $\xag{0}=\vx_0$.
    Suppose that for some $t\in[T]$, 
    $$\fxmd{l}-f^*\leq \Fone, \forall l\in[t].$$
    Next, we will bound $\fxmd{t+1}-f^*$.
    By Lemma \ref{lemma B.1}, we have 
    \begin{align*}
      \norm{\xmd{t+1}-\xag{t}}^2\leq & \frac{1}{A_{t+1}\cdot A_{t}}\sumlimits{i=1}{t} \frac{A_i^2\cdot \left(\lambda_i-\beta\right)^2}{A_i-A_{i-1}}\norm{\nablaf{i}{md}}^2\notag\\
      \leq & \sumlimits{i=1}{t}\frac{\left(A_i-A_{i-1}-1\right)^2}{A_i-A_{i-1}}\beta^2\norm{\nablaf{i}{md}}^2\notag\\
      \leq & \beta^2 \sumlimits{i=1}{t}\left(A_i-A_{i-1}\right)\norm{\nablaf{i}{md}}^2,
    \end{align*}
    where the second and the third inequalities follows from Lemma \ref{lemma 2.1}.
     Using the fact that $A_i=B_i+1/\beta, \forall i\in[T]$ and applying Lemma \ref{lemma 2.1} again, we have
       \begin{align}
      \label{eq 16}
      \norm{\xmd{t+1}-\xag{t}}^2\leq & \beta^2 \sumlimits{i=1}{t} \sqrt{A_i}\norm{\nablaf{i}{md}}^2
      \leq  \beta^\frac{5}{2}  \sumlimits{i=1}{t} A_i \norm{\nablaf{i}{md}}^2.
    \end{align}
    Since the assumption that $\fxmd{l}-f^*\leq \Fone, \forall l\in[t]$, (\ref{eq 13}) holds here for all $l\in[t]$.
    Therefore, summing up (\ref{eq 13}) over $l\in[t]$, we have
    \begin{align}
      \label{eq 17}
      \frac{1}{4}\beta\sumlimits{i=1}{t}A_i\norm{\nablaf{i}{md}}^2\leq A_0\triangleag{0}+\frac{1}{2\beta}\norm{\vx_0-\vx^*}^2.
    \end{align}
    Combining (\ref{eq 16}) and (\ref{eq 17}) and the constraint of $\beta$, we obtain that
    \begin{align*}
      \norm{\xmd{t+1}-\xag{t}}^2\leq &\sqrt{\beta}\cdot 4\left(\triangleag{0}+\frac{1}{2}\norm{\vx_0-\vx^*}^2\right)\leq \frac{1}{\left(L_1+L_2\right)^2}.
    \end{align*}
Applying Lemma \ref{lemma A.2} again, we have
\begin{align}
  \label{eq 18}
    \fxmd{t+1} \leq &\fxag{t}+\la \nablaf{t}{ag}, \xmd{t+1}-\xag{t}\ra +\frac{L_0+L_1\norm{\nablaf{t}{ag}}^p+L_2\left(\triangleag{t}\right)^q}{2}\norm{\xmd{t+1}-\xag{t}}^2\notag\\
    \leq & \fxag{t}+\norm{\nablaf{t}{ag}}\cdot \norm{\xmd{t+1}-\xag{t}}+\frac{L_0+L_1\norm{\nablaf{t}{ag}}^p+L_2\left(\triangleag{t}\right)^q}{2}\norm{\xmd{t+1}-\xag{t}}^2\notag\\
    \leq & \fxag{t}+ \frac{1}{L_1+L_2}\norm{\nablaf{t}{ag}}+\frac{L_0+L_1\norm{\nablaf{t}{ag}}^p+L_2\left(\triangleag{t}\right)^q}{2\left(L_1+L_2\right)^2},
\end{align}
where the second inequality holds since Cauchy-Schwarz inequality.
Further, considering the assumption that $\fxmd{l}-f^*\leq \Fone, \forall l\in[t]$, (\ref{5.18}) holds here.
Noting that $A_t\beta=\left(B_t+\frac{1}{\beta}\right)\cdot \beta\geq 1$, we could deduce
\begin{align}
  \label{eq 31.1}
  \triangleag{t}\leq \triangleag{0}+\frac{1}{2}\norm{\vx_0-\vx^*}^2=\mathcal{C}_1.
\end{align}
Plugging (\ref{eq 31.1}) into (\ref{eq 18}), subtracting $f^*$ from both sides and applying Corollary \ref{corollary 1}, we obtain that
\begin{align*}
  \trianglemd{t+1}\leq \mathcal{C}_1+ \frac{1}{L_1+L_2}\sqrt{g(\mathcal{C}_1)}+\frac{L_0+L_1\left(g(\mathcal{C}_1)\right)^{\frac{p}{2}}+L_2\mathcal{C}_1^q}{2\left(L_1+L_2\right)^2}= \Fone,
\end{align*}
where $g(\cdot)$ is the function defined in (\ref{g function}).
Therefore, the induction is complete and we obtain the desired result.
\end{proof}
Now we are ready to obtain the main convergence result.
\begin{proof}[Proof of Theorem \ref{theorem 1}]
  Noting that  $\trianglemd{t}\leq \Fone, \forall t\in[T]$ proved in Lemma \ref{lemma C.3.1}, we could apply Lemma \ref{lemma C.2} and obtain that
  \begin{align*}
    \triangleag{T}\leq  \frac{1}{A_T\beta}\left(\triangleag{0}+\frac{1}{2}\norm{\vx_0-\vx^*}^2\right)=\frac{\mathcal{C}_1\cdot \LFone}{A_T}.
  \end{align*}
  Applying Lemma \ref{lemma 2.1}, we obtain that 
  \begin{align}
    \label{Complete theorem 1}
    \triangleag{T}\leq & \frac{8\mathcal{C}_1\cdot\left(L_0+L_1\left(\left(g(\Fone)\right)^{\frac{1}{2}}+\left(g(\Fone)\right)^\frac{p}{2}\right)\right)}{T^2}+\frac{8\mathcal{C}_1\left(L_2\left(\left(g(\Fone)\right)^{\frac{1}{2}}+\Fone^q\right)+8\mathcal{C}_1^2\left(L_1+L_2\right)^4\right)}{T^2}.
  \end{align}
\end{proof}

\section{Convergence of SNAG under $(L_0,L_1,L_2)$-smoothness for stochastic optimization}
\label{subsection 4.2}
In this section, we provide a probabilistic convergence result for SNAG (see Algorithm \ref{algorithm2}) under the relaxed affine-variance noise assumption of its sub-Gaussian form.
Compared to Algorithm \ref{algorithm1}, the only difference is that stochastic gradients, instead of accurate gradients, are accessible.
Obviously, Lemma \ref{lemma 2.1} still holds for the stochastic case.
\begin{algorithm}[H]
  \caption{Stochastic Nesterov's Accelerated Gradient Descent (SNAG)}
  \label{algorithm2}
  \begin{algorithmic}[1]
  \State {\bfseries Input:}  Horizon $T$, $\xag{0}=\vx_0\in \mathbb{R}^d$, step sizes $\beta, \{\lambda_t\}_{t\in [T]}$ and $A_0=1/\beta, B_0=0$.
  \For{$t=1,\cdots, T$}
  \State $B_t=B_{t-1}+\frac{1}{2}\left(1+\sqrt{4B_{t-1}+1}\right)$;
  \State $A_t=B_t+\frac{1}{\beta}$;
  \State $\xmd{t}=\frac{A_{t-1}}{A_t}\xag{t-1}+\left(1-\frac{A_{t-1}}{A_t}\right)\vx_{t-1}$;
  \State \textbf{Set} $\vg_t=\nabla f_\mathbf{z}\left(\vx_t^{md};\rvz_t\right)$;
  \State $\vx_t=\vx_{t-1}-\lambda_t \vg_t$;
  \State $\xag{t}=\xmd{t}-\beta \vg_t$.
  \EndFor
  \end{algorithmic}
\end{algorithm}
\subsection{Convergence results}
\begin{theorem}
 \label{theorem 2}
  Let $T>0$ and $\delta\in(0,\frac{1}{3})$. Suppose that $\left\{\xag{t}\right\}_{t\in[T]}$ is a sequence generated by \Algref{algorithm2}, $f$ is $\left(L_0,L_1,L_2\right)$-smooth and convex, and the step sizes $\beta, \lambda_t$ satisfy that 
  \begin{align}
    \label{beta}
    \beta=\min\left\{\frac{1}{\mathcal{G}_1},\frac{1}{\mathcal{G}_2 T^{\frac{6}{5}}},\frac{1}{\mathcal{G}_3 T^\frac{2}{3}},\frac{1}{\mathcal{M} T^{\frac{3}{2}}},\frac{1}{\mathcal{M}^2 T} \right\},
  \end{align}
  \begin{align}
    \label{lambda}
    \lambda_t=\frac{1}{4} \beta \left(A_t-A_{t-1}\right),
  \end{align}
 where $\mathcal{G}_1,\mathcal{G}_2,\mathcal{G}_3$ and $\mathcal{M}$ are polynomials of $\log\frac{T}{\delta}$, depending on the noise and smoothness parameters\footnote{The explicit expressions of these notations are presented in (\ref{setting}), (\ref{setting 2}), (\ref{setting 3}) and (\ref{setting 4}).}.
  Under Assumption \ref{assumption 1}-\ref{assumption 3}, with probability at least $1-3\delta$, we have\footnote{Refer (\ref{Complete theorem 2}) for the explicit convergence result.}
   \begin{align*}
       \functionvaluegap{T}{ag} \leq \tilde{\mathcal{O}}\left(\frac{1}{T^2}+\sqrt{\frac{A+B+C}{T}}\right).
    \end{align*}
\end{theorem}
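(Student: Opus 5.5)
We mirror the deterministic proof of Theorem~\ref{theorem 1}, with an induction on a high-probability event in place of the deterministic induction. Write $\bm{\xi}_t=\vg_t-\nablaf{t}{md}$ for the noise. First, by Assumption~\ref{assumption 3}, a union bound and Markov's inequality applied to $\exp(\cdot)$, we obtain an event $\mathcal{E}_1$ of probability at least $1-\delta$ on which
\[
\norm{\bm{\xi}_t}^2\leq \left(A\trianglemd{t}+B\norm{\nablaf{t}{md}}^2+C\right)\log\frac{T\mathrm{e}}{\delta},\qquad \forall t\in[T].
\]
Given a bound $\trianglemd{t}\leq\Ftwo$ (with $\Ftwo$ a constant playing the role of $\Fone$, now polylogarithmic in $T/\delta$), Corollary~\ref{corollary 1} gives $\norm{\nablaf{t}{md}}^2\leq g(\Ftwo)$. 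Hence $\norm{\vg_t}$ is bounded by a polylog constant, and the choice of $\beta$ in (\ref{beta}) keeps $\norm{\xag{t}-\xmd{t}}=\beta\norm{\vg_t}\leq\min\{1/L_1,1/L_2\}$. This makes Lemma~\ref{lemma A.2} applicable with effective smoothness constant $\Ldescentlemma$.

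Next we redo Lemma~\ref{lemma C.2} with $\vg_t$. The descent step produces $-\beta\la\nablaf{l}{md},\vg_l\ra+\frac{L}{2}\beta^2\norm{\vg_l}^2$. The convexity step (\ref{1.8}) is unchanged. The mirror step (\ref{eq 9}) uses $\vg_l$, producing an extra cross term $(1-\frac{A_{l-1}}{A_l})\la\bm{\xi}_l,\vx^*-\vx_{l-1}\ra$. With $\lambda_l=\frac14\beta(A_l-A_{l-1})$, multiplying by $A_l$ and telescoping gives
\[
A_t\triangleag{t}+\tfrac{2}{\beta}\norm{\vx_t-\vx^*}^2\leq A_0\triangleag{0}+\tfrac{2}{\beta}\norm{\vx_0-\vx^*}^2-c\beta\sumlimits{l=1}{t}A_l\norm{\nablaf{l}{md}}^2+\sumlimits{l=1}{t}M_l+\sumlimits{l=1}{t}\beta A_l\norm{\bm{\xi}_l}^2 .
\]
Here $M_l$ collects the martingale differences $\la\bm{\xi}_l,(A_l-A_{l-1})(\vx^*-\vx_{l-1})-\beta A_l\nablaf{l}{md}\ra$. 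The last sum is controlled on $\mathcal{E}_1$. The $B\norm{\nabla f}^2$ part is absorbed into the negative term once $\beta$ is small. The $A\trianglemd{l}+C$ part contributes at most $\beta\,\mathrm{polylog}\cdot T^3$. Dividing by $A_T\sim T^2$ and balancing against $1/(\beta T^2)$ then gives the $\sqrt{(A+C)/T}$-type term, with $\beta\sim 1/(\mathcal{M}T^{3/2})$.

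The main obstacle is the martingale term $\sum_l M_l$. Its increments involve $\norm{\vx_{l-1}-\vx^*}$, which is itself random and is what we are trying to bound. We would apply a sub-Gaussian (Freedman-type) concentration inequality for martingale differences with conditionally sub-Gaussian scale $\sigma_l^2\lesssim (A_l-A_{l-1})^2\norm{\vx_{l-1}-\vx^*}^2(A\trianglemd{l}+B\norm{\nablaf{l}{md}}^2+C)+\ldots$. The resulting bound has the form $\sum M_l\leq \frac{1}{4}\max_{l\leq t}\frac{1}{\beta}\norm{\vx_{l}-\vx^*}^2+\beta\,\mathrm{polylog}\sum(A_l-A_{l-1})^2(\cdots)$. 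This is obtained via the scalar inequality $ab\leq \epsilon a^2+b^2/(4\epsilon)$ inside the exponential moment, giving an event $\mathcal{E}_2$ of probability at least $1-\delta$. Taking the maximum over $t$ on both sides then absorbs the distance term into the left-hand side. The leftover terms are of order $\beta T^3(A+C)+\beta T^{5/2}\cdots$, which the remaining constraints $T^{6/5},T^{2/3},\mathcal{M}^2T$ in (\ref{beta}) are designed to dominate; a third event handles a similar sum for $\norm{\bm{\xi}}^2$ deviations, accounting for the $1-3\delta$.

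Finally, we close the induction as in Lemma~\ref{lemma C.3.1}. On $\mathcal{E}_1\cap\mathcal{E}_2\cap\mathcal{E}_3$, suppose $\trianglemd{l}\leq\Ftwo$ for all $l\leq t$. The telescoped inequality bounds $\triangleag{t}$ by a polylog constant, and it also bounds $\beta\sum A_l\norm{\nablaf{l}{md}}^2$ and $\frac{1}{\beta}\norm{\vx_t-\vx^*}^2$. The analogue of Lemma~\ref{lemma B.1} now uses $\vg_i$ in place of the gradients, so the added noise is controlled on $\mathcal{E}_1$, and this bounds $\norm{\xmd{t+1}-\xag{t}}\leq 1/(L_1+L_2)$ for small $\beta$. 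Lemma~\ref{lemma A.2} then gives $\trianglemd{t+1}\leq\Ftwo$, which completes the induction. Plugging the resulting bound into the telescoped inequality at $t=T$, and using $A_T\geq T^2/4$ together with the choice of $\beta$, yields $\triangleag{T}\leq\tilde{\mathcal{O}}(1/T^2+\sqrt{(A+B+C)/T})$.
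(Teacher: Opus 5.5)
Your outline matches the paper's proof. It uses the noise event of Lemma~\ref{lemma C.1}, the one-step inequality (\ref{eq 27}) for the potential $A_l\triangle_l^{ag}+\frac{2}{\beta}\norm{\vx_l-\vx^*}^2$ with exactly your increment $M_l$, concentration via Lemma~\ref{lemma high probability}, and an outer induction on $\triangle_t^{md}\le\mathcal{F}_2$ closed through Lemma~\ref{lemma C.5} and Lemma~\ref{lemma A.2}. The only structural difference is how the random distance inside the martingale is handled. The paper fixes $\lambda=2\log\frac{T}{\delta}/\mathcal{P}(\mathcal{F}_2)$ and runs an inner induction (Lemma~\ref{lemma C.4}) against the deterministic target $\mathcal{P}(\mathcal{F}_2)$, which supplies $\norm{\vx_i-\vx^*}^2\le\frac{\beta}{2}\mathcal{P}(\mathcal{F}_2)$; you instead absorb a running maximum. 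Both give the same leftover $O(\beta TA_T\mathcal{M}^2\log\frac{T}{\delta})$, provided your $\lambda$ is deterministic and of order $1/(\beta TA_T\mathcal{M}^2)$. That choice gives $\frac{3\lambda}{4}\sum_{l\le t}A_l\mathcal{M}_l^2\norm{\vx_{l-1}-\vx^*}^2\le\frac{1}{4\beta}\max_{l<t}\norm{\vx_l-\vx^*}^2$ under the induction hypothesis; a pointwise Young bound on $\la\bm{\xi}_l,\cdot\ra$ would instead lose a factor of order $T$. Your third event is superfluous: on $\mathcal{E}_1$ and the induction hypothesis, $\norm{\bm{\xi}_l}^2\le\mathcal{M}^2\log\frac{T\mathrm{e}}{\delta}$ deterministically. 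The paper's three events are $\mathcal{E}_1$ plus the two halves of your $M_l$ (Lemmas~\ref{lemma 6.1} and~\ref{lemma D.2.2}).

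One step is wrong as stated: the $B\norm{\nabla f(\vx_l^{md})}^2$ part of the noise cannot be absorbed into the negative gradient term by taking $\beta$ small. With $\lambda_l=\frac14\beta(A_l-A_{l-1})$, the coefficient of $\norm{\bm{\xi}_l}^2$ in the $A_l$-weighted inequality contains $\lambda_l(A_l-A_{l-1})=\frac{\beta}{4}B_l$. This is comparable to the negative coefficient $\frac12\beta A_l$ as soon as $\beta B_l\gtrsim1$, so absorption would require $B\log\frac{T\mathrm{e}}{\delta}\lesssim1$, independently of $\beta$. The same obstruction sits in the quadratic variation of $M_l$. The fix, as in the paper, is to bound $\norm{\nabla f(\vx_l^{md})}^2\le g(\mathcal{F}_2)$ and work with $\mathcal{M}^2=A\mathcal{F}_2+Bg(\mathcal{F}_2)+C$ throughout. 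This is why $B$ sits under the square root (contrast Theorem~\ref{theorem 3}, where $\lambda_t$ carries a factor $1/(1+B)$). So your intermediate ``$\sqrt{(A+C)/T}$'' is not what this argument gives, though your final bound is.

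Also make the non-circularity explicit and attribute the step-size constraints correctly. The constraints $\beta\le1/(\mathcal{M}T^{3/2})$ and $\beta\le1/(\mathcal{M}^2T)$ yield $\triangle_t^{ag}\le\beta\mathcal{P}(\mathcal{F}_2)\le2\mathcal{C}_2+17\log\frac{T\mathrm{e}}{\delta}=\mathcal{H}$, a bound free of $\mathcal{F}_2$. That is what lets $\mathcal{F}_2$ be defined from $\mathcal{H}$ even though $\mathcal{M}$ and $\beta$ depend on $\mathcal{F}_2$. The $T^{6/5}$ and $T^{2/3}$ constraints (together with $\mathcal{G}_{1,4}$) are not there to dominate martingale leftovers. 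They make $\norm{\vx_{t+1}^{md}-\vx_t^{ag}}^2\le34\sqrt{\beta}\,\mathcal{C}_2+\frac{595}{4}\bigl(\beta^{5/2}T^3+\beta^{3/2}T\bigr)\mathcal{M}^2\log\frac{T\mathrm{e}}{\delta}\le(L_1+L_2)^{-2}$.
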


Theorem \ref{theorem 2} provides accelerated convergence rates in high probability. 
%while \cite{yu2025convergence} obtained only $\tilde{\mathcal{O}}\left(1/T+\sqrt{\left(A+C\right)/T}\right)$ for SGD and RSAG 
Up to logarithm factors, this convergence rate matches the expected convergence rate in \cite{ghadimi2016accelerated}, where they assumed bounded variance noise and standard smoothness, and it is unimprovable for smooth convex stochastic optimization \cite{lan2012optimal}.

Furthermore, the convergence rate in Theorem \ref{theorem 2} could accelerate to $\tilde{\mathcal{O}}\left(1/T^2\right)$ if the noise parameters are sufficiently small, which matches the rate for the deterministic NAG in \cite{li2024convex} under a generalized $(L_0,L_1,0)$-smoothness: $\norm{\nabla^2 f\left(\vx\right)}\leq l\left(\norm{\nabla f\left(\vx\right)}\right)$ with a sub-quadratic non-decreasing positive function $l$
	up to logarithm factors.
Note that  \cite{li2024convex} did not provide the analysis for SNAG (but they did consider SGD) and we consider the  $(L_0,L_1,L_2)$-smoothness. 
To analyze the stochastic setting, we modify the step size slightly by a constant factor and $\beta\sumlimits{i=1}{t}A_i\norm{\nablaf{i}{md}}^2$ appears in (\ref{eq 6.20}), which makes it feasible to bound $\norm{\xmd{t+1}-\xag{t}}^2$ in stochastic optimization. Combining with several new probabilistic estimates, we could finish the proof. We refer to the following subsection for the complete proof.
Our analysis for the above theorem, which relies on Assumption \ref{assumption 3}, does not apply under the weaker noise condition of Assumption \ref{assumption 4} in the generalized smoothness.

\subsection{Proof of Theorem \ref{theorem 2}}
\label{proof of theorem 2}
In this subsection, we will prove Theorem \ref{theorem 2} by induction.
We first introduce some notations used in Theorem \ref{theorem 2}, i.e.,
\begin{align}
    \label{setting}
     & \mathcal{M}= \sqrt{A \Ftwo+B g\left(\Ftwo\right)+C}, &  &\mathcal{G}_1=\max\left\{\mathcal{G}_{1,1},\mathcal{G}_{1,2},\mathcal{G}_{1,3},\mathcal{G}_{1,4}\right\},\notag\\
    & \mathcal{G}_2=\left(595\right)^{\frac{2}{5}}\left(L_1+L_2\right)^{\frac{4}{5}}\mathcal{M}^{\frac{4}{5}}\left(\logte\right)^{\frac{2}{5}},&  &\mathcal{G}_3=\left(595\right)^{\frac{2}{3}}\left(L_1+L_2\right)^{\frac{4}{3}}\mathcal{M}^{\frac{4}{3}}\left(\logte\right)^{\frac{2}{3}}, 
  \end{align}
  where 
  \begin{align}
    \label{setting 2}
          &\mathcal{G}_{1,1}=L_1\left(\sqrt{g\left(\Ftwo\right)}+\mathcal{M}\sqrt{\logte}\right),&  &\mathcal{G}_{1,2}=L_2\left(\sqrt{g\left(\Ftwo\right)}+\mathcal{M}\sqrt{\logte}\right),\notag\\
        &\mathcal{G}_{1,3}=4\left(L_0+L_1\left(g\left(\Ftwo\right)\right)^{\frac{p}{2}}+L_2\Ftwo^q\right),&  &\mathcal{G}_{1,4}=4624\left(L_1+L_2\right)^4\mathcal{C}_2^2.
  \end{align}
Furthermore, 
\begin{align}
  \label{setting 3}
  \Ftwo=\mathcal{H}+\frac{\sqrt{g(\mathcal{H})}}{L_1+L_2}+\frac{L_0+L_1\left(g(\mathcal{H})\right)^{\frac{p}{2}}+L_2\mathcal{H}^q}{2\left(L_1+L_2\right)^2},
\end{align}
with the notations
\begin{align}
  \label{setting 4}
  \mathcal{C}_2=\triangleag{0}+2\norm{\vx_0-\vx^*}^2,\quad \mathcal{H}=2\mathcal{C}_2+ 17\logte.
\end{align}
In what follows, we will present several lemmas to prepare for the convergence analysis in the stochastic setting.
\subsubsection{Preliminaries}
The following lemma bound the noise norm under Assumption \ref{assumption 3}.
\begin{lemma}
  \label{lemma C.1}
  Given $T\geq 1$, suppose that for any $t\in[T]$, $\rvup{t}=\nabla f_\mathbf{z}\left(\vx_t;\rvz_t\right)-\nabla f\left(\vx_t\right)$ satisfies Assumption \ref{assumption 3}. Then, for any given $\delta\in(0,1)$, it holds that with probability at least $1-\delta$,
  \begin{align}
    \label{eq 32}
    \norm{\rvup{t}}^2\leq \left(A\left(f(\vx_t)-f^*\right)+B\norm{\nabla f\left(\vx_t\right)}^2+C\right)\logte, \quad \forall t\in[T].
  \end{align}
\end{lemma}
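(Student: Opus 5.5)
The plan is to prove the bound for each fixed $t$ separately, using only the sub-Gaussian tail from Assumption \ref{assumption 3}, and then take a union bound over $t\in[T]$. No martingale argument should be needed, because the claim is pointwise in $t$ and not a bound on a sum.

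Fix $t\in[T]$ and write
\begin{align*}
\sigma_t^2 := A\left(f(\vx_t)-f^*\right)+B\norm{\nabla f(\vx_t)}^2+C .
\end{align*}
The point $\vx_t$ is measurable with respect to $\rvz_1,\dots,\rvz_{t-1}$, so $\sigma_t^2$ is determined before $\rvz_t$ is drawn. Assumption \ref{assumption 3}, applied conditionally at the point $\vx_t$, gives
\begin{align*}
\mathbb{E}_t\left[\exp\left(\norm{\rvup{t}}^2/\sigma_t^2\right)\right]\leq \mathrm{e}.
\end{align*}
If $\sigma_t^2=0$, the assumption (read with $1/0=+\infty$) forces $\rvup{t}=0$ almost surely, and the inequality is trivial. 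Otherwise I apply the conditional Markov inequality to the nonnegative variable $\exp(\norm{\rvup{t}}^2/\sigma_t^2)$ at level $T\mathrm{e}/\delta$:
\begin{align*}
\mathbb{P}_t\left(\norm{\rvup{t}}^2>\sigma_t^2\logte\right)
=\mathbb{P}_t\left(\exp\left(\norm{\rvup{t}}^2/\sigma_t^2\right)>\frac{T\mathrm{e}}{\delta}\right)
\leq \frac{\mathrm{e}\,\delta}{T\mathrm{e}}=\frac{\delta}{T}.
\end{align*}
Taking full expectation removes the conditioning, so the unconditional failure probability at step $t$ is at most $\delta/T$.

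A union bound over the $T$ indices then gives total failure probability at most $\delta$. Hence (\ref{eq 32}) holds simultaneously for all $t\in[T]$ with probability at least $1-\delta$.

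The only delicate step is that $\vx_t$ is itself random, so Assumption \ref{assumption 3} must be used conditionally on the past rather than at a deterministic point. This is valid because the oracle sample $\rvz_t$ is drawn independently of $\rvz_1,\dots,\rvz_{t-1}$, and the assumption holds uniformly in $\vx$. The degenerate case $\sigma_t^2=0$ also needs the convention noted above. Beyond these two points the argument is routine.
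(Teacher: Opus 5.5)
Your proposal is correct and is essentially the paper's argument. The paper sets $\zeta_t=\norm{\rvup{t}}^2/\sigma_t^2$, uses $\mathbb{E}[\exp(\zeta_t)]\leq \mathrm{e}$, and applies Markov's inequality to $\exp(\max_{t\in[T]}\zeta_t)\leq\sum_{t=1}^{T}\exp(\zeta_t)$ at level $T\mathrm{e}/\delta$, which is exactly your per-step Markov bound combined with a union bound. Your explicit handling of the conditioning on $\rvz_1,\dots,\rvz_{t-1}$ and of the degenerate case $\sigma_t^2=0$ adds care that the paper leaves implicit.
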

\begin{proof}
    Denote $\zeta_t=\frac{\norm{\rvup{t}}^2}{A\left(f(\vx_t)-f^*\right)+B\norm{\nabla f\left(\vx_t\right)}^2+C}$, $\forall t\in[T]$, where $T$ is fixed.
    By the definition of the noise model, we have
    $$\mathbb{E}_{t}\left[\exp\left(\zeta_t\right)\right]\leq \mathrm{e}, \quad\text{thus},\quad \mathbb{E}\left[\exp\left(\zeta_t\right)\right]\leq \mathrm{e}.$$
    By Markov's inequality, for any $\beta\in\mathbb{R}$,
    \begin{align*}
    \mathbb{P}\left(\max\limits_{t\in [T]}\zeta_t\geq \beta\right)=&\mathbb{P}\left(\exp\left(\max\limits_{t\in [T]}\zeta_t\right)\geq \mathrm{e}^\beta\right)\notag\\
    \leq & \text{e}^{-\beta}\mathbb{E}\left[\exp\left(\max\limits_{t\in [T]}\zeta_t\right)\right]\leq \text{e}^{-\beta}\mathbb{E}\left[\sumlimits{t=1}{T}\exp\left(\zeta_t\right)\right]\leq \text{e}^{-\beta}T \text{e}.
    \end{align*}
    Therefore, with probability at least $1-\delta$, we have 
    \begin{align*}
    \norm{\rvup{t}}^2\leq & \left(A\left(f(\vx_t)-f^*\right)+B\norm{\nabla f\left(\vx_t\right)}^2+C\right)\logte, \quad \forall t\in[T].
    \end{align*}
\end{proof}
Next, we will establish a probabilistic upper bound for summation of the two martingale difference sequences based on the noise assumption and Lemma \ref{lemma high probability}.
We denote $\rvxi{t}=\vg_t-\nablaf{t}{md}$ in the rest of the paper.
\begin{lemma}
    \label{lemma 6.1}
    Given $T\geq 1$ and $\delta\in(0,1)$, if Assumptions \ref{assumption 1}, \ref{assumption 2} and \ref{assumption 3} hold, then \withprobability, for all $l\in[T]$, we have 
    \begin{align}
        \label{6.2}
        \sumlimits{k=1}{l} -A_k\la \rvxi{k}, \nablaf{k}{md}\ra\leq \frac{1}{4A_T\mathcal{M}^2}\sumlimits{k=1}{l} A_k^2\norm{\nablaf{k}{md}}^2\mathcal{M}_k^2+3A_T\mathcal{M}^2\log\frac{T}{\delta},
    \end{align} 
    where
  \begin{align}
    \label{Mk}
    \mathcal{M}_t=\sqrt{A\trianglemd{t}+B g\left(\trianglemd{t}\right)+C},
  \end{align}
 $\mathcal{M}$ is defined in (\ref{setting}) and $g\left(\cdot\right)$ is a function defined in (\ref{g function}).
  \end{lemma}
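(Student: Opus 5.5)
We prove (\ref{6.2}) with the standard exponential-supermartingale argument for sums of sub-Gaussian martingale differences. This is presumably the content of Lemma \ref{lemma high probability}: for a martingale difference sequence $X_k$ and nonnegative predictable $\sigma_k$ with $\mathbb{E}_k[\exp(X_k^2/\sigma_k^2)]\leq \mathrm{e}$, for any $\lambda>0$, with probability at least $1-\delta$, for all $l\in[T]$,
$\sum_{k\le l}X_k\leq \frac{3\lambda}{4}\sum_{k\le l}\sigma_k^2+\frac{1}{\lambda}\log\frac{T}{\delta}$.

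\textbf{Step 1: set up the martingale difference.} Define $X_k=-A_k\la\rvxi{k},\nablaf{k}{md}\ra$. The point $\xmd{k}$ is determined by $\rvz_1,\dots,\rvz_{k-1}$, and $A_k$ is deterministic. By Assumption \ref{assumption 2}, $\mathbb{E}_k[X_k]=0$, so $\{X_k\}$ is a martingale difference sequence.

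\textbf{Step 2: obtain the conditional sub-Gaussian proxy.} By Cauchy--Schwarz, $X_k^2\le A_k^2\norm{\nablaf{k}{md}}^2\norm{\rvxi{k}}^2$. Assumption \ref{assumption 3} applied at $\xmd{k}$ then gives $\mathbb{E}_k[\exp(X_k^2/\sigma_k^2)]\le\mathrm{e}$ with
$\sigma_k^2=A_k^2\norm{\nablaf{k}{md}}^2\bigl(A\trianglemd{k}+B\norm{\nablaf{k}{md}}^2+C\bigr)$.
Corollary \ref{corollary 1} gives $\norm{\nabla f(\vx)}^2\le g(f(\vx)-f^*)$, so $\sigma_k^2\le A_k^2\norm{\nablaf{k}{md}}^2\mathcal{M}_k^2$. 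This bound is predictable. We may use it even though $\mathcal{M}_k$ itself is not bounded by $\mathcal{M}$ here: the lemma keeps $\mathcal{M}_k$ on the right-hand side, so no induction hypothesis is needed.

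\textbf{Step 3: apply the concentration bound.} Use $\mathbb{E}_k[\exp(\lambda X_k)]\le\exp(\tfrac34\lambda^2\sigma_k^2)$, which holds for a centered variable with $\mathbb{E}\exp(X^2/\sigma^2)\le\mathrm{e}$. It follows that $\exp\bigl(\sum_{k\le l}(\lambda X_k-\tfrac34\lambda^2\sigma_k^2)\bigr)$ is a supermartingale. Markov's inequality at each $l$ followed by a union bound over $l\in[T]$ (or Ville's maximal inequality) yields, with probability $1-\delta$ for all $l$,
$\sum_{k\le l}X_k\le \tfrac34\lambda\sum_{k\le l}\sigma_k^2+\frac1\lambda\log\frac T\delta$.

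\textbf{Step 4: choose $\lambda$.} Take $\lambda=1/(3A_T\mathcal{M}^2)$. The first term becomes $\frac{1}{4A_T\mathcal{M}^2}\sum_{k\le l}A_k^2\norm{\nablaf{k}{md}}^2\mathcal{M}_k^2$ and the second becomes $3A_T\mathcal{M}^2\log\frac T\delta$, which is exactly (\ref{6.2}).

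The main obstacle is the deterministic $\lambda$. It must not depend on the random $\mathcal{M}_k$, which is why it is normalized by the fixed constants $A_T$ and $\mathcal{M}$. The union over $l$ must also be handled: doing it via Markov plus a union bound costs the factor $T$ inside the logarithm, which matches the stated $\log\frac{T}{\delta}$. Finally, the constant $\tfrac34$ in the moment generating function bound must be checked; any constant $c$ works after adjusting $\lambda=1/(4cA_T\mathcal{M}^2)$, and $c=\tfrac34$ gives the stated $3$.
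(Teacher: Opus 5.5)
Your proposal is correct and follows the paper's proof essentially step for step. You use the same martingale difference $X_k=-A_k\langle \bm{\xi}_k,\nablaf{k}{md}\rangle$ and the same Cauchy--Schwarz/Assumption~\ref{assumption 3} proxy, bounded via Corollary~\ref{corollary 1} by $A_k^2\|\nablaf{k}{md}\|^2\mathcal{M}_k^2$; you then apply the Li--Orabona concentration bound with a union bound over $l$, which produces the $\log\frac{T}{\delta}$ factor, and set $\lambda=1/(3A_T\mathcal{M}^2)$. The only difference is that you re-derive the concentration step (the $\exp(\frac34\lambda^2\sigma_k^2)$ moment-generating-function bound and the supermartingale argument), whereas the paper cites Lemma~\ref{lemma high probability} as a black box and then rescales $\delta$.
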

\begin{proof}
  Let $X_k=-A_k\la \rvxi{k}, \nablaf{k}{md}\ra$.
   Note that $\xmd{k}$ and $\vx_{k-1}$ are random variables dependent on $\rvz_1, \cdots, \rvz_{k-1}$ and $\rvxi{k}$ is dependent on $\rvz_1, \cdots, \rvz_k$. 
   It is apparent that $X_k$ is the martingale difference sequence since
    \begin{align*}
      \mathbb{E}_k\left[X_k\right]=-A_k\la \mathbb{E}_k \left[\rvxi{k}\right], \nablaf{k}{md}\ra=0.
    \end{align*}
    Also, by Assumption \ref{assumption 3} and applying Cauchy-Schwarz inequality, we have 
    \begin{align}
         &\mathbb{E}_k\left[\exp\left(\frac{X_k^2}{A_k^2\norm{\nablaf{k}{md}}^2\left(A\trianglemd{k}+B\norm{\nabla \fxmd{k}}^2+C\right)}\right)\right]\notag\\
         \leq&\mathbb{E}_k\left[\exp\left(\frac{A_k^2\norm{\rvxi{k}}^2\norm{\nablaf{k}{md}}^2}{A_k^2\norm{\nablaf{k}{md}}^2\left(A\trianglemd{k}+B\norm{\nabla \fxmd{k}}^2+C\right)}\right)\right]\leq \exp(1)
    \end{align}
    Thus, given any $l\in[T]$, applying Lemma \ref{lemma high probability},  we have that for any $\lambda>0$, \withprobability,
    \begin{align}
        \sumlimits{k=1}{l}X_k\leq &\frac{3\lambda}{4}\sumlimits{k=1}{l} A_k^2\norm{\nablaf{k}{md}}^2\left(A\trianglemd{k}+B\norm{\nabla \fxmd{k}}^2+C\right)+\frac{1}{\lambda}\log\frac{1}{\delta}\notag\\
        \leq & \frac{3\lambda}{4}\sumlimits{k=1}{l} A_k^2 \norm{\nablaf{k}{md}}^2  \mathcal{M}_k^2+\frac{1}{\lambda}\log\frac{1}{\delta},
      \end{align}
      where the second inequality follows from Lemma \ref{lemma A.3}. 
      For any fixed $\lambda$, we can rescale over $\delta$ and have that with probability at least $1-\delta$, for all $l\in[T]$,
\begin{align*}
  \sumlimits{k=1}{l}X_k\leq \frac{3\lambda}{4}\sumlimits{k=1}{l} A_k^2 \norm{\nablaf{k}{md}}^2  \mathcal{M}_k^2+\frac{1}{\lambda}\log\frac{T}{\delta}.
\end{align*}
Let $\lambda=\frac{1}{3A_T\mathcal{M}^2}$, and we obtain the desired result.
  \end{proof}
\begin{lemma} 
  \label{lemma D.2.2}
      Given $T\geq 1$ and $\delta\in(0,1)$, if Assumptions \ref{assumption 1}, \ref{assumption 2} and \ref{assumption 3} hold. Then, \withprobability, for all $l\in[T]$, we have 
    \begin{align}
        \label{6.3}
        \sumlimits{k=1}{l} \left(A_k-A_{k-1}\right)\la \rvxi{k}, \vx^*-\vx_{k-1}\ra \leq \frac{3\log\frac{T}{\delta}}{2\PF} \sumlimits{k=1}{l}A_k\norm{\vx^*-\vx_{k-1}}^2\mathcal{M}_k^2+\frac{\PF}{2},
    \end{align}
    where $\mathcal{M}_k$ is defined in (\ref{Mk}) and $\PF$ is defined in (\ref{PF}).
\end{lemma}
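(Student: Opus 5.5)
We mirror the proof of Lemma \ref{lemma 6.1}, now with the martingale difference sequence
\[
Y_k=\left(A_k-A_{k-1}\right)\la \rvxi{k}, \vx^*-\vx_{k-1}\ra .
\]

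\textbf{Step 1: martingale structure.} Both $\vx_{k-1}$ and $\xmd{k}$ are measurable with respect to $\rvz_1,\cdots,\rvz_{k-1}$. By Assumption \ref{assumption 2}, $\mathbb{E}_k[\rvxi{k}]=0$, so $\mathbb{E}_k[Y_k]=0$.

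\textbf{Step 2: conditional sub-Gaussian bound.} By Cauchy--Schwarz, $Y_k^2\leq (A_k-A_{k-1})^2\norm{\vx^*-\vx_{k-1}}^2\norm{\rvxi{k}}^2$. Assumption \ref{assumption 3}, evaluated at $\xmd{k}$, then gives
\[
\mathbb{E}_k\left[\exp\left(\frac{Y_k^2}{(A_k-A_{k-1})^2\norm{\vx^*-\vx_{k-1}}^2\left(A\trianglemd{k}+B\norm{\nablaf{k}{md}}^2+C\right)}\right)\right]\leq \exp(1).
\]
Lemma \ref{lemma 2.1} gives $(A_k-A_{k-1})^2=B_k<A_k$. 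Lemma \ref{lemma A.3} (equivalently Corollary \ref{corollary 1}) gives $\norm{\nablaf{k}{md}}^2\leq g(\trianglemd{k})$. Together these show that the denominator is at most $A_k\norm{\vx^*-\vx_{k-1}}^2\mathcal{M}_k^2$.

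\textbf{Step 3: concentration and union bound.} Apply Lemma \ref{lemma high probability} for a fixed $l$ and a fixed $\lambda>0$. With probability at least $1-\delta$,
\[
\sumlimits{k=1}{l}Y_k\leq \frac{3\lambda}{4}\sumlimits{k=1}{l}A_k\norm{\vx^*-\vx_{k-1}}^2\mathcal{M}_k^2+\frac{1}{\lambda}\log\frac{1}{\delta}.
\]
Replace $\delta$ by $\delta/T$ and take a union bound over $l\in[T]$; the log term becomes $\log\frac{T}{\delta}$.

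\textbf{Step 4: choice of $\lambda$.} The target is $\frac{3\log(T/\delta)}{2\PF}$ in front of the sum and $\frac{\PF}{2}$ as the additive term. Take $\lambda=\frac{2\log(T/\delta)}{\PF}$. This gives $\frac{3\lambda}{4}=\frac{3\log(T/\delta)}{2\PF}$ and $\frac{1}{\lambda}\log\frac{T}{\delta}=\frac{\PF}{2}$, exactly as in (\ref{6.3}).

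\textbf{Main obstacle.} The choice of $\lambda$ must be deterministic. This holds because $\PF$ is a constant depending only on $\Ftwo$ (via (\ref{PF})). We do not need to bound $\norm{\vx^*-\vx_{k-1}}$ or $\mathcal{M}_k$ here, since they stay inside the random quadratic term; they will be controlled later in the induction. One must also make sure that Lemma \ref{lemma high probability} allows a conditional variance proxy that is random but $\mathcal{F}_{k-1}$-measurable, which is exactly how it was used in Lemma \ref{lemma 6.1}.
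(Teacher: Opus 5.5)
Your proposal is correct and matches the paper's proof step for step. It uses the same martingale difference $Y_k$, and the same Cauchy--Schwarz bound together with $(A_k-A_{k-1})^2=B_k<A_k$ and Lemma~\ref{lemma A.3} to reach the proxy $A_k\norm{\vx^*-\vx_{k-1}}^2\mathcal{M}_k^2$. It then makes the same rescaling of $\delta$ to obtain $\log\frac{T}{\delta}$ uniformly over $l\in[T]$, and the same deterministic choice $\lambda=2\log\frac{T}{\delta}/\PF$.
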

\begin{proof}
    Let $Y_k=\left(A_k-A_{k-1}\right)\la \rvxi{k}, \vx^*-\vx_{k-1}\ra$.
    Note that $\xmd{k}$ and $\vx_{k-1}$ are random variables dependent on $\rvz_1, \cdots, \rvz_{k-1}$ and $\rvxi{k}$ is dependent on $\rvz_1, \cdots, \rvz_k$. 
    It is apparent that $Y_k$ is the martingale difference sequence since
    \begin{align*}
      \mathbb{E}_k\left[Y_k\right]= \left(A_k-A_{k-1}\right)\la \mathbb{E}_k\left[\rvxi{k}\right], \vx^*-\vx_{k-1}\ra=0.
    \end{align*}
    Also, by Assumption \ref{assumption 3} and applying Cauchy-Schwarz inequality, we have 
    \begin{align}
        &\mathbb{E}_k\left[\exp\left(\frac{Y_k^2}{A_k\norm{\vx^*-\vx_{k-1}}^2\left(A\trianglemd{k}+B\norm{\nabla \fxmd{k}}^2+C\right)}\right)\right]\notag\\
        \leq&\mathbb{E}_k\left[\exp\left(\frac{\left(A_k-A_{k-1}\right)^2\norm{\rvxi{k}}^2\norm{\vx^*-\vx_{k-1}}^2}{A_k\norm{\vx^*-\vx_{k-1}}^2\left(A\trianglemd{k}+B\norm{\nabla \fxmd{k}}^2+C\right)}\right)\right]\leq \exp(1),
    \end{align} 
    where the last inequality follows from Lemma \ref{lemma 2.1}.
    Thus, given any $l\in[T]$, applying Lemma \ref{lemma high probability},  we have that for any $\lambda>0$, \withprobability,
    \begin{align}
        \sumlimits{k=1}{l}Y_k\leq & \frac{3\lambda}{4} \sumlimits{k=1}{l}A_k\norm{\vx^*-\vx_{k-1}}^2\left(A\trianglemd{k}+B\norm{\nabla \fxmd{k}}^2+C\right)+\frac{1}{\lambda}\log\frac{1}{\delta}\notag\\
        \leq & \frac{3\lambda}{4} \sumlimits{k=1}{l}A_k\norm{\vx^*-\vx_{k-1}}^2 \mathcal{M}_k^2+\frac{1}{\lambda}\log\frac{1}{\delta},
    \end{align}
    where the second inequality follows from Lemma \ref{lemma A.3} and the definition of $\mathcal{M}_k$ in (\ref{Mk}). 
      For any fixed $\lambda$, we can rescale over $\delta$ and have that with probability at least $1-\delta$, for all $l\in[T]$,
      \begin{align*}
        \sumlimits{k=1}{l}Y_k\leq \frac{3\lambda}{4} \sumlimits{k=1}{l}A_k\norm{\vx^*-\vx_{k-1}}^2 \mathcal{M}_k^2+\frac{1}{\lambda}\log\frac{T}{\delta}.
      \end{align*}
      Let $\lambda=\frac{2\log\frac{T}{\delta}}{\PF}$, and we obtain the desired result.
\end{proof}
 We provide the following lemma for Algorithm \ref{algorithm2}, which is similar to Lemma \ref{lemma B.1} in the deterministic case.
\begin{lemma}
  \label{lemma C.5}
  Let $\left\{\xmd{k}\right\}_{k\in[T]}$ and $\left\{\xag{k}\right\}_{k\in[T]}$ be the two sequences generated by Algorithm \ref{algorithm2}. Then we have that for all $k\in[T]$,
  \begin{align*}
      \norm{\xmd{k}-\xag{k-1}}^2\leq \frac{1}{A_k\cdot A_{k-1}}\sumlimits{i=1}{k-1} \frac{A_i^2\cdot \left(\lambda_i-\beta\right)^2}{A_i-A_{i-1}}\norm{\vg_i}^2.
  \end{align*}
\end{lemma}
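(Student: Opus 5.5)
The argument is the proof of Lemma \ref{lemma B.1} verbatim, with $\nablaf{i}{md}$ replaced by the stochastic gradient $\vg_i$. That proof used only the linear update structure of the algorithm, never properties of the gradient itself. First, from Algorithm \ref{algorithm2} I will write
\begin{align*}
\xag{k}-\vx_k=\xmd{k}-\beta\vg_k-\vx_{k-1}+\lambda_k\vg_k=\frac{A_{k-1}}{A_k}\left(\xag{k-1}-\vx_{k-1}\right)+\left(\lambda_k-\beta\right)\vg_k,
\end{align*}
which follows by substituting the definition of $\xmd{k}$. Multiplying by $A_k$ and telescoping from $\xag{0}=\vx_0$ gives $\xag{k}-\vx_k=\frac{1}{A_k}\sum_{i=1}^k A_i(\lambda_i-\beta)\vg_i$. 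The triangle inequality then bounds $\norm{\xag{k}-\vx_k}$ by $\frac{1}{A_k}\sum_{i=1}^k A_i|\lambda_i-\beta|\norm{\vg_i}$.

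Second, the mixing step gives $\xmd{k}-\xag{k-1}=\left(1-\frac{A_{k-1}}{A_k}\right)\left(\vx_{k-1}-\xag{k-1}\right)$. Combining this with the previous bound at index $k-1$ yields
\[
\norm{\xmd{k}-\xag{k-1}}\leq\left(1-\frac{A_{k-1}}{A_k}\right)\frac{1}{A_{k-1}}\sum_{i=1}^{k-1}A_i|\lambda_i-\beta|\norm{\vg_i}.
\]

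Third, I square this and write each summand as the weight $w_i=A_i-A_{i-1}$ times $\frac{A_i|\lambda_i-\beta|}{A_i-A_{i-1}}\norm{\vg_i}$. The weights sum to $A_{k-1}-A_0$, so Jensen's (Cauchy--Schwarz) inequality gives
\[
\Big(\sum_i w_i u_i\Big)^2\le (A_{k-1}-A_0)\sum_i \frac{A_i^2(\lambda_i-\beta)^2}{A_i-A_{i-1}}\norm{\vg_i}^2 .
\]

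Finally, I simplify the prefactor
\[
\left(1-\frac{A_{k-1}}{A_k}\right)^2\frac{A_{k-1}-A_0}{A_{k-1}^2}=\frac{(A_k-A_{k-1})^2(A_{k-1}-A_0)}{A_k^2A_{k-1}^2}.
\]
By Lemma \ref{lemma 2.1}, $(A_k-A_{k-1})^2=B_k<A_k$, and trivially $A_{k-1}-A_0\le A_{k-1}$. So the prefactor is at most $\frac{1}{A_kA_{k-1}}$, which gives the claimed bound.

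No step is a real obstacle. The only points needing care are that Lemma \ref{lemma 2.1} remains valid for Algorithm \ref{algorithm2}, since $A_t,B_t$ are deterministic, and that the case $k=1$ is trivial: there the sum is empty and $\xmd{1}=\xag{0}$.
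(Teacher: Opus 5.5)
Your proposal is correct and is essentially the paper's own argument. The paper proves this lemma by noting that the proof of Lemma \ref{lemma B.1} carries over verbatim with $\nablaf{i}{md}$ replaced by $\vg_i$, and you have written out that argument in full: telescoping $A_k(\xag{k}-\vx_k)$, the mixing identity, weighted Cauchy--Schwarz with weights $A_i-A_{i-1}$, and the bound $(A_k-A_{k-1})^2=B_k<A_k$ from Lemma \ref{lemma 2.1}, together with the correct remarks that $A_t,B_t$ are deterministic and that the case $k=1$ is trivial.
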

\begin{proof}
  Lemma \ref{lemma C.5} can be seen as a corollary of Lemma \ref{lemma B.1}.
  As long as we replace the accurate gradient $\nablaf{k}{md}$ in Lemma \ref{lemma B.1} with the stochastic gradient $\vg_t$, the proof is finished.
\end{proof}
\subsubsection{Convergence analysis}
In the next two lemmas, we assume that $\trianglemd{l}$ is bounded in the first $t$ iterations and derive the iteration sequence based on the above analysis, in preparation for the induction argument in Lemma \ref{lemma C.6}.
\begin{lemma}
  \label{lemma C.3}
    Suppose that $\fxmd{l}-f^*\leq \Ftwo, \forall l\in[t]$. 
    Then, under (\ref{eq 32}), for all $l\in[t]$,
    \begin{align}
      \label{eq 27}
  A_l \triangleag{l}+ \frac{2}{\beta}\norm{\vx_l-\vx^*}^2\leq & A_{l-1}\triangleag{l-1}+ \frac{2}{\beta}\norm{\vx_{l-1}-\vx^*}^2
  -\frac{1}{2}\beta A_l\norm{\nablaf{l}{md}}^2+\frac{1}{2}\beta A_l\norm{\rvxi{l}}^2\notag\\
  &+\la \rvxi{l},-\beta A_l\nablaf{l}{md}+\left(A_l-A_{l-1}\right)\left(\vx^*-\vx_{l-1}\right)\ra.
\end{align}
\end{lemma}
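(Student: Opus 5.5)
The plan is to redo the one-step argument of Lemma \ref{lemma C.2}, now with $\vg_l=\nablaf{l}{md}+\rvxi{l}$ and $\lambda_l=\frac{1}{4}\beta(A_l-A_{l-1})$. The only genuinely new issue is the step-size locality check needed to apply Lemma \ref{lemma A.2}. We fix $l\in[t]$ and write $\mathcal{L}:=L_0+L_1\left(g(\Ftwo)\right)^{\frac{p}{2}}+L_2\Ftwo^q$.

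\textbf{Step 1 (locality).} From $\trianglemd{l}\leq\Ftwo$ and Corollary \ref{corollary 1} we get $\norm{\nablaf{l}{md}}\leq\sqrt{g(\Ftwo)}$. Lemma \ref{lemma A.3} then gives $\mathcal{M}_l\leq\mathcal{M}$, so under (\ref{eq 32}) we have $\norm{\rvxi{l}}\leq\mathcal{M}\sqrt{\logte}$. Hence
\begin{align*}
\norm{\xag{l}-\xmd{l}}=\beta\norm{\vg_l}\leq\beta\left(\sqrt{g(\Ftwo)}+\mathcal{M}\sqrt{\logte}\right)\leq\min\left\{\frac{1}{L_1},\frac{1}{L_2}\right\},
\end{align*}
where the last step uses $\beta\leq 1/\mathcal{G}_{1,1}$ and $\beta\leq 1/\mathcal{G}_{1,2}$. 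I expect this to be the main obstacle: it is exactly where the sub-Gaussian bound (\ref{eq 32}) and the inductive hypothesis are needed, and it is what makes the first components of $\mathcal{G}_1$ necessary.

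\textbf{Step 2 (descent).} With locality in hand, Lemma \ref{lemma A.2} gives
\begin{align*}
\fxag{l}\leq\fxmd{l}-\beta\la\nablaf{l}{md},\vg_l\ra+\frac{\mathcal{L}\beta^2}{2}\norm{\vg_l}^2 .
\end{align*}
We expand $\la\nablaf{l}{md},\vg_l\ra=\norm{\nablaf{l}{md}}^2+\la\nablaf{l}{md},\rvxi{l}\ra$ and bound $\norm{\vg_l}^2\leq 2\norm{\nablaf{l}{md}}^2+2\norm{\rvxi{l}}^2$. Since $\mathcal{L}\beta\leq\frac{1}{4}$ (from $\beta\leq 1/\mathcal{G}_{1,3}$), the quadratic term is at most $\frac{\beta}{4}\left(\norm{\nablaf{l}{md}}^2+\norm{\rvxi{l}}^2\right)$.

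\textbf{Step 3 (convexity and the three-point identity).} Exactly as in (\ref{1.8}), convexity gives
\begin{align*}
\fxmd{l}\leq\frac{A_{l-1}}{A_l}\fxag{l-1}+\left(1-\frac{A_{l-1}}{A_l}\right)f^*+\left(1-\frac{A_{l-1}}{A_l}\right)\la\nablaf{l}{md},\vx_{l-1}-\vx^*\ra .
\end{align*}
In the last term we replace the true gradient by $\vg_l$, which produces the correction $\la\rvxi{l},\vx^*-\vx_{l-1}\ra$. For the $\vg_l$ part we use the identity (\ref{eq 9}) with $\vg_l$ in place of the gradient:
\begin{align*}
\la\vg_l,\vx_{l-1}-\vx^*\ra=\frac{1}{2\lambda_l}\left(\norm{\vx_{l-1}-\vx^*}^2-\norm{\vx_l-\vx^*}^2\right)+\frac{\lambda_l}{2}\norm{\vg_l}^2 .
\end{align*}
After multiplying through by $A_l$, the choice of $\lambda_l$ gives $\frac{A_l-A_{l-1}}{2\lambda_l}=\frac{2}{\beta}$, which is precisely the potential coefficient in the claim. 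For the leftover term, Lemma \ref{lemma 2.1} gives $(A_l-A_{l-1})^2<A_l$, so
\begin{align*}
\frac{(A_l-A_{l-1})\lambda_l}{2}\norm{\vg_l}^2=\frac{\beta(A_l-A_{l-1})^2}{8}\norm{\vg_l}^2\leq\frac{\beta A_l}{4}\left(\norm{\nablaf{l}{md}}^2+\norm{\rvxi{l}}^2\right).
\end{align*}

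\textbf{Step 4 (collect).} We combine Steps 2 and 3, multiply by $A_l$, and subtract $A_lf^*$. The coefficient of $\norm{\nablaf{l}{md}}^2$ is $-\beta A_l+\frac{\beta A_l}{4}+\frac{\beta A_l}{4}=-\frac{1}{2}\beta A_l$. The coefficient of $\norm{\rvxi{l}}^2$ is at most $\frac{1}{2}\beta A_l$. The two cross terms combine into $\la\rvxi{l},-\beta A_l\nablaf{l}{md}+(A_l-A_{l-1})(\vx^*-\vx_{l-1})\ra$. This yields (\ref{eq 27}).
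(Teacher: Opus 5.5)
Your proposal is correct and follows essentially the same route as the paper. It uses the locality check via $\beta\leq 1/\mathcal{G}_{1,1},1/\mathcal{G}_{1,2}$ together with (\ref{eq 32}), the descent inequality of Lemma \ref{lemma A.2}, the convexity bound (\ref{1.8}) combined with the three-point identity for $\vg_l$, the split $\norm{\vg_l}^2\leq 2\norm{\nablaf{l}{md}}^2+2\norm{\rvxi{l}}^2$, and the bounds $(A_l-A_{l-1})^2<A_l$ and $\bigl(L_0+L_1(g(\mathcal{F}_2))^{p/2}+L_2\mathcal{F}_2^q\bigr)\beta\leq\frac14$; the only cosmetic difference is that you multiply by $A_l$ before bounding the leftover three-point term, whereas the paper bounds $\frac{A_l-A_{l-1}}{A_l}\lambda_l\leq\frac{\beta}{4}$ first and multiplies at the end.
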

\begin{proof}
    With the assumption that $\trianglemd{l}\leq \Ftwo, \forall l\in[t]$ and applying Corollary \ref{corollary 1}, we have $\norm{\nablaf{l}{md}}\leq \sqrt{g\left(\Ftwo\right)}, \forall l\in[t]$. 
    Therefore,
    \begin{align*}
        \norm{\xag{l}-\xmd{l}}=&\beta\norm{\nablaf{l}{md}+\rvxi{l}}
        \leq  \beta\left(\norm{\nablaf{l}{md}}+\norm{\rvxi{l}}\right)\notag\\
        \leq & \beta\left(\sqrt{g(\Ftwo)}+\mathcal{M}\sqrt{\logte}\right)
        \leq  \min\left\{1/L_1,1/L_2\right\},
    \end{align*}
    where the first inequality follows from the triangle inequality and the second inequality holds since (\ref{eq 32}).
    The last inequality holds since $\beta\leq 1/\mathcal{G}_{1,1}$ and $\beta\leq 1/\mathcal{G}_{1,2}$ with $\mathcal{G}_{1,1}, \mathcal{G}_{1,2}$ defined in (\ref{setting 2}).
    By Lemma \ref{lemma A.2}, we have
    \begin{align}
        \label{6.4}
        \fxag{l} \leq & \fxmd{l}+\la \nablaf{l}{md},\xag{l}-\xmd{l}\ra+\frac{\Ldescentlemma}{2}\norm{\xag{l}-\xmd{l}}^2\notag\\
        = & \fxmd{l}-\beta\norm{\nablaf{l}{md}}^2-\beta\la \nablaf{l}{md},\rvxi{l}\ra \notag\\
        &+\frac{\Ldescentlemma}{2}\beta^2\norm{\nablaf{l}{md}+\rvxi{l}}^2.
    \end{align}
    Note that (\ref{1.8}) is derived from the convexity of $f$ and the iteration step 
    \begin{align*}
      \xmd{t}=\frac{A_{t-1}}{A_t}\xag{t-1}+\left(1-\frac{A_{t-1}}{A_t}\right)\vx_{t-1},
    \end{align*}
    which is the same in Algorithm \ref{algorithm1} and Algorithm \ref{algorithm2}.
    Thus, (\ref{1.8}) holds here.
Combining (\ref{1.8}) and (\ref{6.4}), 
\begin{align}
      \fxag{l} \leq & \frac{A_{l-1}}{A_l} \fxag{l-1}+\left(1-\frac{A_{l-1}}{A_l}\right)f^* + \left(1-\frac{A_{l-1}}{A_l}\right) \la \nablaf{l}{md},\vx_{l-1}-\vx^*\ra \notag\\
       &-\beta\norm{\nablaf{l}{md}}^2-\beta\la \nablaf{l}{md},\rvxi{l}\ra +\frac{\Ldescentlemma}{2}\beta^2\norm{\nablaf{l}{md}+\rvxi{l}}^2.
\end{align}
Also, by the iteration step, we have
\begin{align*}
     &\norm{\vx_{l-1}-\vx^*}^2-2\lambda_l \la \nablaf{l}{md}+\rvxi{l},\vx_{l-1}-x^*\ra+\lambda_l^2 \norm{\nablaf{l}{md}+\rvxi{l}}^2\notag\\
        =&\norm{\vx_{l-1}-\lambda_l \left(\nablaf{l}{md}+\rvxi{l}\right)-\vx^*}^2=\norm{\vx_l-\vx^*}^2.
\end{align*}
Hence, 
 \begin{align}
  \label{eq 30}
    \la \nablaf{l}{md}+\rvxi{l},\vx_{l-1}-\vx^*\ra =\frac{1}{2\lambda_l}\left[\norm{\vx_{l-1}-\vx^*}^2-\norm{\vx_l-\vx^*}^2\right]+\frac{\lambda_l}{2}\norm{\nablaf{l}{md}+\rvxi{l}}^2.
 \end{align}
Combining with the fact that 
\begin{align}
  \label{eq 31}
    \norm{\nablaf{l}{md}+\rvxi{l}}^2
  =  \norm{\nablaf{l}{md}}^2+ 2 \la \rvxi{l}, \nablaf{l}{md} \ra +\norm{\rvxi{l}}^2
   \leq 2\norm{\nablaf{l}{md}}^2+ 2\norm{\rvxi{l}}^2,
\end{align}
%\deleted{we obtain that 
%\begin{align}
%        \fxag{l}\leq & \frac{A_{l-1}}{A_l} \fxag{l-1}+\left(1-\frac{A_{l-1}}{A_l}\right)f^*+\frac{A_l-A_{l-1}}{2A_l\cdot\lambda_l}\left[\norm{\vx_{l-1}-\vx^*}^2-\norm{\vx_l-\vx^*}^2\right]\notag\\
%        & -\beta\left(1-\frac{\left(\Ldescentlemma\right)\beta}{2}-\frac{\lambda_l\left(A_l-A_{l-1}\right)}{2\beta A_l}\right)\norm{\nablaf{l}{md}}^2\notag\\
%        &+\left(\frac{\left(\Ldescentlemma\right)\beta^2}{2}+\frac{\lambda_l\left(A_l-A_{l-1}\right)}{2A_l}\right)\norm{\rvxi{l}}^2\notag\\
%        &+\la \rvxi{l},\left(-\beta+\left(\Ldescentlemma\right)\beta^2+\frac{\lambda_l\left(A_l-A_{l-1}\right)}{A_l}\right)\nablaf{l}{md}\ra\notag\\
%        & +\la \rvxi{l},\frac{A_l-A_{l-1}}{A_l}\left(\vx^*-\vx_{l-1}\right)\ra.
%\end{align}
%}
%\deleted{Using the fact that $\la \va,\vb\ra\leq \norm{\va}\norm{\vb}\leq \frac{1}{2}\left(\norm{\va}^2+\norm{\vb}^2\right)$ derived from Cauchy-Schwarz inequality and Young's inequality,} 
we have
\begin{align}
      \fxag{l}\leq & \frac{A_{l-1}}{A_l} \fxag{l-1}+\left(1-\frac{A_{l-1}}{A_l}\right)f^*+\frac{A_l-A_{l-1}}{2A_l\cdot\lambda_l}\left[\norm{\vx_{l-1}-\vx^*}^2-\norm{\vx_l-\vx^*}^2\right]\notag\\
        & -\beta\left(1-\left(\Ldescentlemma\right)\beta-\frac{\lambda_l\left(A_l-A_{l-1}\right)}{\beta A_l}\right)\norm{\nablaf{l}{md}}^2\notag\\
        &+\left(\left(\Ldescentlemma\right)\beta^2+\frac{\lambda_l\left(A_l-A_{l-1}\right)}{A_l}\right)\norm{\rvxi{l}}^2\notag\\
        & +\la \rvxi{l},-\beta \nablaf{l}{md}+\frac{A_l-A_{l-1}}{A_l}\left(\vx^*-\vx_{l-1}\right)\ra.
\end{align}
Since the setting of $\lambda_l$ in (\ref{lambda}), we have
\begin{align*}
\frac{A_l-A_{l-1}}{2A_l\cdot\lambda_l}= \frac{2}{A_l\cdot \beta},
\end{align*}
and
\begin{align*}
  \frac{A_l-A_{l-1}}{A_l}\lambda_l=\frac{A_l-A_{l-1}}{4A_l}\cdot\beta\left(A_l-A_{l-1}\right)\leq \frac{\beta}{4},
\end{align*}
where the inequality follows from Lemma \ref{lemma 2.1}.
Combining with the constraint that $\beta\leq 1/\mathcal{G}_{1,3}$, we have
\begin{align*}
    \fxag{l}\leq & \frac{A_{l-1}}{A_l} \fxag{l-1}+\left(1-\frac{A_{l-1}}{A_l}\right)f^*+\frac{2}{A_l\cdot \beta}\left[\norm{\vx_{l-1}-\vx^*}^2-\norm{\vx_l-\vx^*}^2\right]\notag\\
        &-\frac{1}{2}\beta\norm{\nablaf{l}{md}}^2+\frac{1}{2}\beta\norm{\rvxi{l}}^2+\la \rvxi{l},-\beta \nablaf{l}{md}+\frac{A_l-A_{l-1}}{A_l}\left(\vx^*-\vx_{l-1}\right)\ra.
\end{align*}
Multiplying $A_l$ on both sides and re-arranging the inequality, we obtain the desired result.
\end{proof}

\begin{lemma}
  \label{lemma C.4}
    Under the condition of \lemmaref{lemma C.3},  let \eqref{eq 32}, \eqref{6.2} and \eqref{6.3}. Then,
    \begin{align}
        \label{eq 6.20}
        \gapdistance{l}{l}+\frac{1}{4}\beta\sumlimits{i=1}{l}A_i\norm{\nablaf{i}{md}}^2\leq \PF, \forall 0\leq l\leq t,
    \end{align}
    where 
   \begin{align}
    \label{PF}
        \PF=&\frac{2\mathcal{C}_2}{\beta} +\frac{17}{2} T A_T\beta\mathcal{M}^2\logte,
\end{align}
and  $\mathcal{C}_2$ is defined in \eqref{setting 4}.
\end{lemma}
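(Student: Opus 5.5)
The plan is to telescope the one-step inequality (\ref{eq 27}) of Lemma \ref{lemma C.3} and absorb the two martingale sums with (\ref{6.2}) and (\ref{6.3}). The circular dependence on $\norm{\vx_{k-1}-\vx^*}^2$ in (\ref{6.3}) is handled by a strong induction on $l$.

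For $l=0$ the left-hand side of (\ref{eq 6.20}) equals $A_0\triangleag{0}+\frac{2}{\beta}\norm{\vx_0-\vx^*}^2\le \frac{\mathcal{C}_2}{\beta}\le\PF$, since $A_0=1/\beta$. Now assume (\ref{eq 6.20}) holds for all indices up to $l-1$, where $l\le t$. In particular $\frac{2}{\beta}\norm{\vx_{k-1}-\vx^*}^2\le \PF$ for every $k\le l$.

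Sum (\ref{eq 27}) over $i=1,\dots,l$ and move the $-\frac12\beta A_i\norm{\nablaf{i}{md}}^2$ terms to the left. This gives
\begin{align*}
A_l\triangleag{l}+\frac{2}{\beta}\norm{\vx_l-\vx^*}^2+\frac12\beta\sumlimits{i=1}{l}A_i\norm{\nablaf{i}{md}}^2\le \frac{\mathcal{C}_2}{\beta}+\frac12\beta\sumlimits{i=1}{l}A_i\norm{\rvxi{i}}^2+\beta\, S_1+S_2,
\end{align*}
where $S_1$ is the left-hand side of (\ref{6.2}) and $S_2$ is the left-hand side of (\ref{6.3}). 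I bound the pieces as follows.

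\textbf{Noise squares.} By (\ref{eq 32}), Lemma \ref{lemma A.3} and $\trianglemd{i}\le\Ftwo$, we have $\norm{\rvxi{i}}^2\le \mathcal{M}_i^2\logte\le\mathcal{M}^2\logte$. Hence $\frac12\beta\sum_i A_i\norm{\rvxi{i}}^2\le \frac12 TA_T\beta\mathcal{M}^2\logte$.

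\textbf{$\beta S_1$.} By (\ref{6.2}) and $\mathcal{M}_k\le\mathcal{M}$, $A_k\le A_T$, we get $\beta S_1\le \frac14\beta\sum_k A_k\norm{\nablaf{k}{md}}^2+3\beta A_T\mathcal{M}^2\log\frac{T}{\delta}$. The first term is absorbed into the left side, leaving $\frac14\beta\sum A_i\norm{\nablaf{i}{md}}^2$ there, which is exactly the coefficient in (\ref{eq 6.20}).

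\textbf{$S_2$.} By (\ref{6.3}) and the induction hypothesis $\norm{\vx_{k-1}-\vx^*}^2\le \beta\PF/2$,
\begin{align*}
S_2\le \frac{3\log\frac{T}{\delta}}{2\PF}\cdot\frac{\beta\PF}{2}\, T A_T\mathcal{M}^2+\frac{\PF}{2}=\frac34 TA_T\beta\mathcal{M}^2\log\frac{T}{\delta}+\frac{\PF}{2}.
\end{align*}
This is the step that makes the induction necessary: without the hypothesis, the factor $1/\PF$ in (\ref{6.3}) cannot be cancelled.

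Collecting everything, the left-hand side of (\ref{eq 6.20}) is at most
\begin{align*}
\frac{\mathcal{C}_2}{\beta}+\Big(\tfrac12+3+\tfrac34\Big)TA_T\beta\mathcal{M}^2\logte+\frac{\PF}{2},
\end{align*}
using $T\ge1$ and $\log\frac{T}{\delta}\le\logte$. Since $\frac{17}{4}TA_T\beta\mathcal{M}^2\logte+\frac{\mathcal{C}_2}{\beta}=\frac{\PF}{2}$ by (\ref{PF}), the total is at most $\PF$, which closes the induction. This exactly matches the constant $\frac{17}{2}$, which suggests the intended accounting.

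The main point needing care is the ordering of the induction. Inequality (\ref{6.3}) at step $l$ involves only $\vx_{k-1}$ with $k\le l$, so its terms are controlled by the hypothesis at earlier indices. The high-probability events are fixed beforehand and hold for all $l$ simultaneously, so no additional union bound is needed.
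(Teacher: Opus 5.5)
Your proposal is correct and follows the paper's proof essentially step for step: a strong induction on $l$, telescoping \eqref{eq 27}, bounding $\norm{\rvxi{i}}^2\le\mathcal{M}^2\logte$ via \eqref{eq 32}, absorbing half of the gradient sum with \eqref{6.2}, and using the induction hypothesis $\norm{\vx_{k-1}-\vx^*}^2\le\frac{1}{2}\beta\PF$ to cancel the $1/\PF$ factor in \eqref{6.3}. Your accounting $\frac12+3+\frac34=\frac{17}{4}$, together with $\frac{\mathcal{C}_2}{\beta}+\frac{17}{4}TA_T\beta\mathcal{M}^2\logte=\frac{\PF}{2}$, is exactly how the paper closes the induction. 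You also correctly cite \eqref{eq 32}, rather than \eqref{6.2} as the paper's text does, for the noise-square bound.
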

\begin{proof}
%    In what follows, we assume \ref{eq 32}, \ref{6.2} and \ref{6.3} always hold and then we deduce \ref{eq 6.20}.
%    Since \ref{eq 32}, \ref{6.2} and \ref{6.3} are probability events and hold with probability at least $1-\delta$ separately, it follows that \ref{eq 6.20} hold with probability at least $1-3\delta$.
    It is apparent that $$\gapdistance{0}{0}\leq \PF.$$
    Suppose that for some $k\in[t-1]$, 
    \begin{align}
      \label{eq 50}
      \gapdistance{l}{l}+\frac{1}{4}\beta\sumlimits{i=1}{l}A_i\norm{\nablaf{i}{md}}^2\leq \PF, \forall 0\leq l\leq k.
    \end{align}
    In what follows, we will bound 
    \begin{align*}
      \gapdistance{k+1}{k+1}+\frac{1}{4}\beta\sumlimits{i=1}{k+1}A_i\norm{\nablaf{i}{md}}^2.
    \end{align*}
    Note that $\fxmd{l}-f^*\leq\Ftwo, \forall l\in[t]$, according to Lemma \ref{lemma C.3},
    (\ref{eq 27}) and $\mathcal{M}_l\leq \mathcal{M}$ hold here for all $l\in[k+1]$. 
    Thus, summing up (\ref{eq 27}) over $l\in[k+1]$, we have
    \begin{align}
        \label{6.21}
        \gapdistance{k+1}{k+1}  \leq \gapdistance{0}{0}-\frac{1}{2}\beta \sumlimits{i=1}{k+1}A_i\norm{\nablaf{i}{md}}^2\notag\\
        +\frac{1}{2}\beta\sumlimits{i=1}{k+1}A_i\norm{\rvxi{i}}^2-\beta\sumlimits{i=1}{k+1}A_i\la \rvxi{i}, \nablaf{i}{md}\ra +\sumlimits{i=1}{k+1}\left(A_i-A_{i-1}\right)\la \rvxi{i},\vx^*-\vx_{i-1}\ra.
    \end{align}
    Applying (\ref{6.2}) and letting $l=k+1$, we have
    \begin{align}
        \label{6.22}
        -\beta\sumlimits{i=1}{k+1}A_i\la  \rvxi{i}, \nablaf{i}{md}\ra \leq & \frac{1}{4A_T\mathcal{M}^2}\beta\sumlimits{i=1}{k+1}A_i^2\norm{\nablaf{i}{md}}^2\mathcal{M}_i^2+3A_T\beta\mathcal{M}^2\log \frac{T}{\delta}\notag\\
        \leq & \frac{1}{4}\beta \sumlimits{i=1}{k+1} A_i \norm{\nablaf{i}{md}}^2+3A_T\beta\mathcal{M}^2\log \frac{T}{\delta},
    \end{align}
    where the second inequality follows from $\mathcal{M}_i\leq \mathcal{M}$ and $A_i\leq A_T$ for all $i\in[k+1]$.
    Similarly, applying (\ref{6.3}), we obtain that
    \begin{align}
        \label{6.23}
        \sumlimits{i=1}{k+1}\left(A_i-A_{i-1}\right)\la \rvxi{i},\vx^*-\vx_{i-1}\ra \leq & \frac{3\log \frac{T}{\delta}}{2\PF} \sumlimits{i=1}{k+1}A_i\norm{\vx^*-\vx_{i-1}}^2\mathcal{M}_i^2+\frac{\PF}{2}\notag\\
        \leq &  \frac{3}{4}\beta\log \frac{T}{\delta} \sumlimits{i=1}{k+1}A_i \mathcal{M}_i^2+\frac{\PF}{2}\notag\\
        \leq &  \frac{3}{4}\beta T \cdot A_T  \mathcal{M}^2\log \frac{T}{\delta}+\frac{\PF}{2},
    \end{align}
    where the second inequality holds since
    \begin{align*}
      \norm{\vx_i-\vx^*}^2\leq \frac{1}{2}\beta\cdot\PF, \quad \forall 0\leq i\leq k,
    \end{align*}
    derived from \eqref{eq 50}, and the last inequality follows from $\mathcal{M}_i\leq \mathcal{M}$ and $A_i\leq A_T$ for all $i\in[k+1]$.
    Combining \eqref{6.21}, \eqref{6.22} and \eqref{6.23}, we have
    \begin{align*}
       & \gapdistance{k+1}{k+1} \notag\\
        \leq &  \gapdistance{0}{0}
        -\frac{1}{2}\beta \sumlimits{i=1}{k+1}A_i\norm{\nablaf{i}{md}}^2+\frac{1}{2}\beta\sumlimits{i=1}{k+1}A_i\norm{\rvxi{i}}^2\notag\\
        &+ \frac{1}{4}\beta\sumlimits{i=1}{k+1}A_i\norm{\nablaf{i}{md}}^2+3A_T\beta\mathcal{M}^2\log \frac{T}{\delta} + \frac{3}{4}\beta T \cdot A_T  \mathcal{M}^2\log \frac{T}{\delta}+\frac{\PF}{2}.
    \end{align*}
 Applying \eqref{6.2} with the assumption that $\trianglemd{l}\leq \Ftwo, \forall l\in[t]$, 
 \begin{align*}
  \norm{\rvxi{l}}^2\leq \mathcal{M}^2\logte.
 \end{align*}
 Combining the above inequalities, we obtain that
    \begin{align*}
       &\gapdistance{k+1}{k+1}\notag\\
       \leq & \gapdistance{0}{0}-\frac{1}{4}\beta\sumlimits{i=1}{k+1}A_i\norm{\nablaf{i}{md}}^2\notag\\
        &+\left(\frac{1}{2}\logte+\frac{3}{4}\log\frac{T}{\delta}\right) T\cdot A_T\beta \mathcal{M}^2+3A_T\beta\mathcal{M}^2\log \frac{T}{\delta}+\frac{\PF}{2}\notag\\
        \leq & \gapdistance{0}{0}-\frac{1}{4}\beta\sumlimits{i=1}{k+1}A_i\norm{\nablaf{i}{md}}^2 + \frac{17}{4} T A_T \beta\mathcal{M}^2\logte+\frac{\PF}{2}.
    \end{align*}
    Hence, we could deduce that 
\begin{align}
    \label{eq 6.29}
    \gapdistance{k+1}{k+1}+\frac{1}{4}\beta\sumlimits{i=1}{k+1}A_i\norm{\nablaf{i}{md}}^2\leq \PF,
\end{align}
since 
\begin{align*}
    \PF=&\frac{2}{\beta} \left(\triangleag{0}+2\norm{\vx_0-\vx^*}^2\right) +\frac{17}{2} T A_T\beta\mathcal{M}^2\logte.
\end{align*}
\end{proof}
Based on previous lemmas, we will provide the upper bound of $\trianglemd{t}$ for all $t\in[T]$.
\begin{lemma}
    \label{lemma C.6}
    Under the condition of Theorem \ref{theorem 2}, let (\ref{eq 32}), (\ref{6.2}) and (\ref{6.3}). Then,
    \begin{align}
      \label{eq C.6}
        \functionvaluegap{t}{md}\leq \Ftwo, \forall t\in[T],
    \end{align}
    where $\Ftwo$ is defined in (\ref{setting 3}).
\end{lemma}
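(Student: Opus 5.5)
We argue by induction on $t$, in parallel with Lemma \ref{lemma C.3.1}. We work on the event where \eqref{eq 32}, \eqref{6.2} and \eqref{6.3} hold. For the base case, $\xmd{1}=\frac{A_0}{A_1}\xag{0}+(1-\frac{A_0}{A_1})\vx_0=\vx_0$, so $\trianglemd{1}=\triangleag{0}\leq \mathcal{C}_2\leq\mathcal{H}\leq\Ftwo$. For the inductive step, assume $\trianglemd{l}\leq \Ftwo$ for all $l\in[t]$. Then Lemma \ref{lemma C.3} and Lemma \ref{lemma C.4} apply up to index $t$, which gives
\begin{align*}
A_t\triangleag{t}+\frac{2}{\beta}\norm{\vx_t-\vx^*}^2+\frac{1}{4}\beta\sumlimits{i=1}{t}A_i\norm{\nablaf{i}{md}}^2\leq \PF .
\end{align*}

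\textbf{Step 1: bound $\triangleag{t}$.} Since $A_t\geq 1/\beta$, we get $\triangleag{t}\leq \beta\PF=2\mathcal{C}_2+\frac{17}{2}TA_T\beta^2\mathcal{M}^2\logte$. By Lemma \ref{lemma 2.1}, $A_T\leq T^2+1/\beta$. Hence $TA_T\beta^2\mathcal{M}^2\leq T^3\beta^2\mathcal{M}^2+T\beta\mathcal{M}^2\leq 2$, using $\beta\leq 1/(\mathcal{M}T^{3/2})$ and $\beta\leq 1/(\mathcal{M}^2T)$. Therefore $\triangleag{t}\leq 2\mathcal{C}_2+17\logte=\mathcal{H}$.

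\textbf{Step 2: bound $\norm{\xmd{t+1}-\xag{t}}$.} By Lemma \ref{lemma C.5} with $\lambda_i=\frac14\beta(A_i-A_{i-1})$ and Lemma \ref{lemma 2.1}, we obtain, as in \eqref{eq 16},
\begin{align*}
\norm{\xmd{t+1}-\xag{t}}^2\leq \beta^2\sum_{i\le t}(A_i-A_{i-1})\norm{\vg_i}^2\leq 2\beta^2\sum_{i\le t}\sqrt{A_i}\left(\norm{\nablaf{i}{md}}^2+\norm{\rvxi{i}}^2\right).
\end{align*}
For the gradient part, use $\sqrt{A_i}\leq \sqrt{\beta}A_i$ (since $A_i\beta\geq1$) and $\frac14\beta\sum A_i\norm{\nablaf{i}{md}}^2\leq\PF$. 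This bounds the gradient part by $8\beta^{3/2}\PF$, which is at most $\sqrt{\beta}\,(16\mathcal{C}_2+136\logte)$. It is small once $\beta\leq 1/\mathcal{G}_{1,4}$, together with the other step-size restrictions, which absorb the $\logte$ term. For the noise part, use \eqref{eq 32} to get $\norm{\rvxi{i}}^2\leq\mathcal{M}^2\logte$ (note $\mathcal{M}_i\leq\mathcal{M}$ by monotonicity of $g$). Then $\sum_{i\le t}\sqrt{A_i}\leq T\sqrt{A_T}\lesssim T^2+T\beta^{-1/2}$. This gives contributions of order $\beta^2T^2\mathcal{M}^2\logte$ and $\beta^{3/2}T\mathcal{M}^2\logte$. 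These are made at most a small multiple of $1/(L_1+L_2)^2$ precisely by $\beta\leq 1/(\mathcal{G}_2T^{6/5})$ and $\beta\leq1/(\mathcal{G}_3T^{2/3})$, with the constant $595$ chosen to make this work. The target conclusion of this step is $\norm{\xmd{t+1}-\xag{t}}\leq 1/(L_1+L_2)\leq\min\{1/L_1,1/L_2\}$.

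\textbf{Step 3: conclude.} As in \eqref{eq 18}, apply Lemma \ref{lemma A.2} at $\xag{t}$ together with Cauchy--Schwarz. Then use Corollary \ref{corollary 1}, i.e. $\norm{\nablaf{t}{ag}}\leq\sqrt{g(\triangleag{t})}\leq\sqrt{g(\mathcal{H})}$, to get
\begin{align*}
\trianglemd{t+1}\leq \mathcal{H}+\frac{\sqrt{g(\mathcal{H})}}{L_1+L_2}+\frac{L_0+L_1 (g(\mathcal{H}))^{p/2}+L_2\mathcal{H}^q}{2(L_1+L_2)^2}=\Ftwo .
\end{align*}
This uses monotonicity of $g$ and of the power functions. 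It closes the induction.

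The main obstacle is Step 2. The exponents in the $\beta$-constraints of \eqref{beta} must exactly absorb the $T$-growth of $\sum\sqrt{A_i}\norm{\rvxi{i}}^2$ and of the $TA_T\beta$ term inside $\PF$. The $\logte$ part of $\PF$ in the gradient sum also needs care. I would first try splitting $A_T\leq T^2+1/\beta$ everywhere and matching each resulting monomial in $\beta,T$ against one entry of the minimum in \eqref{beta}.
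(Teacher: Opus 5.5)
Your induction skeleton, Step~1 ($\triangleag{t}\le\beta\PF\le\mathcal{H}$) and Step~3 (descent at $\xag{t}$ plus Corollary~\ref{corollary 1}) coincide with the paper's proof. Step~2 as you wrote it does not close, and this is the obstacle you flagged yourself.

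After the correct estimate of the gradient part by $8\beta^{3/2}\PF$, you insert Step~1's bound $\beta\PF\le\mathcal{H}=2\mathcal{C}_2+17\logte$. That leaves a bare term $136\sqrt{\beta}\logte$, and nothing in \eqref{beta} makes it $\le c\,(L_1+L_2)^{-2}$. The entry $\mathcal{G}_{1,4}$ only yields $\sqrt{\beta}\,\mathcal{C}_2\le 1/(68(L_1+L_2)^2)$, which says nothing about $\logte$ when $\mathcal{C}_2\ll\logte$. The entries $\mathcal{G}_2,\mathcal{G}_3$ do carry $(L_1+L_2)^2\logte$, but multiplied by $\mathcal{M}^2$. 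That is exactly the factor you cancelled in Step~1 via $\beta\le 1/(\mathcal{M}T^{3/2})$ and $\beta\le 1/(\mathcal{M}^2T)$. Concretely, take $A=B=C=0$, so $\mathcal{M}=0$ and every $T$-dependent entry of \eqref{beta} is $+\infty$. Take also $p=q=0$, $L_0=0$ and $\mathcal{C}_2$ small. Then $\beta=1/\mathcal{G}_1$ is of order $1/((L_1+L_2)^2\logte)$, and your term is of order $\sqrt{\logte}/(L_1+L_2)$, far above $(L_1+L_2)^{-2}$ when $L_1+L_2$ is large.

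The repair is what the paper does: in Step~2, do not use the $\mathcal{H}$ bound. Expand $\PF$ instead, so that $\logte$ stays attached to $\mathcal{M}^2$:
\[
8\beta^{3/2}\PF=16\sqrt{\beta}\,\mathcal{C}_2+68\,\beta^{5/2}TA_T\mathcal{M}^2\logte .
\]
Bound the noise part the same way: first $\sqrt{A_i}\le\sqrt{\beta}A_i$, then \eqref{eq 32} with $\mathcal{M}_i\le\mathcal{M}$, giving $2\beta^{5/2}TA_T\mathcal{M}^2\logte$. Only then split $A_T\le T^2+1/\beta$. This gives
\[
\norm{\xmd{t+1}-\xag{t}}^2\le 16\sqrt{\beta}\,\mathcal{C}_2+70\,\beta^{5/2}T^3\mathcal{M}^2\logte+70\,\beta^{3/2}T\mathcal{M}^2\logte\le\frac{8}{17}\cdot\frac{1}{(L_1+L_2)^2},
\]
using $\mathcal{G}_{1,4}$, $\mathcal{G}_2$ and $\mathcal{G}_3$ respectively. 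The paper's constant $595=578+17$ comes from exactly this merge of the $\PF$ term with the noise term. The $\mathcal{H}$ bound is then needed only in Step~3.

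A smaller point: your split $T\sqrt{A_T}\lesssim T^2+T\beta^{-1/2}$ produces the monomial $\beta^2T^2$. That is not the monomial $\mathcal{G}_2$ controls, which is $\beta^{5/2}T^3$. It is harmless, since $\beta^2T^2\le\frac{1}{2}\left(\beta^{5/2}T^3+\beta^{3/2}T\right)$, but applying $\sqrt{A_i}\le\sqrt{\beta}A_i$ first avoids the issue.
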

\begin{proof}
%    The following analysis is based on \eqref{eq 6.20}, which holds with probability at least $1-3\delta$.
%    Therefore, \eqref{eq C.6} also holds with probability at least $1-3\delta$.
%    In what follows, we assume \eqref{eq 6.20} always holds.
    It is apparent that $\fxmd{1}-f^*=\fxag{0}-f^*\leq \Ftwo$.
    Suppose that for some $t\in[T]$, 
    $$\fxmd{l}-f^*\leq \Ftwo, \forall l\in[t].$$
    Then, by \lemmaref{lemma C.4},  \eqref{eq 6.20} holds.
    Next, we will bound $\fxmd{t+1}-f^*$.
    By \lemmaref{lemma C.5}, we have
    \begin{align}
      \label{eq 44}
      \norm{\xmd{t+1}-\xag{t}}^2\leq &\frac{1}{A_{t+1}\cdot A_t}\sumlimits{i=1}{t} \frac{A_i^2\cdot \left(\lambda_i-\beta\right)^2}{A_i-A_{i-1}}\norm{\nablaf{i}{md}+\rvxi{i}}^2\notag\\
    \leq & 2\sumlimits{i=1}{t} \frac{\lambda_i^2+\beta^2}{A_i-A_{i-1}}\norm{\nablaf{i}{md}+\rvxi{i}}^2,
    \end{align}
    where the second inequality holds since $\left(a-b\right)^2\leq 2a^2+2b^2$ and $A_i\leq A_t\leq A_{t+1}, \forall i\in[t]$.
    Also, since $\lambda_t=\frac{1}{4}\beta\left(A_t-A_{t-1}\right)$ for all $t\in[T]$, we have
    \begin{align*}
      \norm{\xmd{t+1}-\xag{t}}^2\leq & 2\beta^2\sumlimits{i=1}{t}\frac{\frac{1}{16}\left(A_i-A_{i-1}\right)^2+1}{A_i-A_{i-1}}\norm{\nablaf{i}{md}+\rvxi{i}}^2\notag\\
      \leq & \frac{17}{8}\beta^2\sumlimits{i=1}{t}\left(A_i-A_{i-1}\right)\norm{\nablaf{i}{md}+\rvxi{i}}^2\notag\\
      \leq & \frac{17}{4}\beta^2\sumlimits{i=1}{t}\left(A_i-A_{i-1}\right)\left(\norm{\nablaf{i}{md}}^2+\norm{\rvxi{i}}^2\right),
    \end{align*}
    where the second inequality follows from \lemmaref{lemma 2.1} and the last inequality holds since $\norm{\va+\vb}^2\leq 2\norm{\va}^2+2\norm{\vb}^2$.
    Applying \lemmaref{lemma 2.1} and using the fact that $\sqrt{\beta A_i}=\sqrt{\beta B_i+1}\geq1, \forall i\in[T]$, we have
    \begin{align}
      \label{eq 58}
      \norm{\xmd{t+1}-\xag{t}}^2\leq & \frac{17}{4}\beta^2 \sumlimits{i=1}{t}\sqrt{A_i}\left(\norm{\nablaf{i}{md}}^2+\norm{\rvxi{i}}^2\right)
      \notag\\
      \leq & \frac{17}{4}\beta^{\frac{5}{2}}\sumlimits{i=1}{t}A_i\left(\norm{\nablaf{i}{md}}^2+\norm{\rvxi{i}}^2\right).
    \end{align}
    Since the assumption that $\fxmd{l}-f^*\leq \Ftwo, \forall l\in[t]$, by \eqref{eq 6.20}, we have 
    \begin{align*}
      \beta \sumlimits{i=1}{t}A_i\norm{\nablaf{i}{md}}^2\leq 4\PF.
    \end{align*}
    Combining with \eqref{eq 58}, \eqref{eq 32} and recalling the expression of $\PF$ in \eqref{PF}, we obtain that 
    \begin{align*}
      \norm{\xmd{t+1}-\xag{t}}^2\leq & 17\beta^\frac{3}{2}\PF+ \frac{17}{4} \beta^{\frac{5}{2}}  T  A_T \mathcal{M}^2\logte\notag\\
      = & 34\sqrt{\beta} \cdot \mathcal{C}_2 + \frac{289}{2}\beta^\frac{5}{2} T  A_T  \mathcal{M}^2\logte + \frac{17}{4}\beta^\frac{5}{2} T  A_T \mathcal{M}^2\logte\notag\\
      = & 34\sqrt{\beta}\cdot \mathcal{C}_2 + \frac{595}{4}\beta^\frac{5}{2} T  A_T  \mathcal{M}^2\logte.
    \end{align*}
  Combining with \lemmaref{lemma 2.1} and the setting that $A_T=B_T+1/\beta$, we have
  \begin{align*}
    \norm{\xmd{t+1}-\xag{t}}^2  \leq  34\sqrt{\beta}\cdot  \mathcal{C}_2 + \frac{595}{4}\beta^\frac{5}{2} T^3 \mathcal{M}^2\logte +\frac{595}{4}\beta^{\frac{3}{2}}T\mathcal{M}^2\logte.
  \end{align*}
    Since $\beta \leq \min\left\{1/\mathcal{G}_{1,4},1/\left(\mathcal{G}_2T^{\frac{6}{5}}\right),1 / \left(\mathcal{G}_3T^{\frac{2}{3}}\right)\right\}$, where $\mathcal{G}_{1,4}, \mathcal{G}_2, \mathcal{G}_3$ are defined in \eqref{setting}, \eqref{setting 2}, 
    \begin{align*}
      \norm{\xmd{t+1}-\xag{t}}^2\leq \frac{1}{\left(L_1+L_2\right)^2}.
    \end{align*}
    Hence, applying \lemmaref{lemma A.2} and Cauchy-Schwarz inequality, we have
    \begin{align}
      \label{eq 45}
      \fxmd{t+1} \leq & \fxag{t}+\la \nablaf{t}{ag}, \xmd{t+1}-\xag{t}\ra + \frac{L_0+L_1\norm{\nablaf{t}{ag}}^p+L_2\left(\triangleag{t}\right)^q}{2}\norm{\xmd{t+1}-\xag{t}}^2\notag\\
      \leq & \fxag{t}+ \norm{\nablaf{t}{ag}}\cdot \norm{\xmd{t+1}-\xag{t}} +\frac{L_0+L_1\norm{\nablaf{t}{ag}}^p+L_2\left(\triangleag{t}\right)^q}{2}\norm{\xmd{t+1}-\xag{t}}^2\notag\\
      \leq & \fxag{t}+\frac{1}{L_1+L_2}\norm{\nablaf{t}{ag}}+\frac{L_0+L_1\norm{\nablaf{t}{ag}}^p+L_2\left(\triangleag{t}\right)^q}{2\left(L_1+L_2\right)^2}.
    \end{align}
    Since the assumption that $\trianglemd{l}\leq \Ftwo, \forall l\in[t]$, by \lemmaref{lemma C.4}, we have
    \begin{align}
      \label{eq 60}
      \triangleag{t}\leq \frac{\PF}{A_t} \leq & \beta\cdot \PF,
    \end{align}
    where the second inequality holds since $A_t\geq 1/\beta$.
    Plugging \eqref{PF} into \eqref{eq 60}, we obtain that
    \begin{align*}
      \triangleag{t}\leq & 2\mathcal{C}_2 + \frac{17}{2}T^3\beta^2 \mathcal{M}^2\logte+\frac{17}{2}T\beta\mathcal{M}^2\logte 
      \leq  2\mathcal{C}_2+ 17 \logte=\mathcal{H},
    \end{align*}
    where the last inequality follows from 
    $$\beta\leq \frac{1}{\mathcal{M}T^{\frac{3}{2}}}, \quad \text{and} \quad \beta\leq \frac{1}{\mathcal{M}^2T}.$$ 
    Note that $\mathcal{H}$ is independent on $\Ftwo$.
    By \corollaryref{corollary 1}, we have
   $
      \norm{\nablaf{t}{ag}}\leq \sqrt{g(\mathcal{H})}.$
    Combining with \eqref{eq 45} and subtracting $f^*$ from both sides, we obtain that
    \begin{align*}
      \trianglemd{t+1}\leq \mathcal{H}+\frac{\sqrt{g(\mathcal{H})}}{L_1+L_2}+\frac{L_0+L_1\left(g(\mathcal{H})\right)^{\frac{p}{2}}+L_2\mathcal{H}^q}{2\left(L_1+L_2\right)^2} = \Ftwo.
    \end{align*}
    Now we finish the induction and obtain the desired result.
\end{proof}
With the above lemmas, we are ready to prove the final convergence result.
\begin{proof}[Proof of Theorem \ref{theorem 2}]
    In what follows, we assume (\ref{eq 32}), (\ref{6.2}) and (\ref{6.3}) always hold, and under these conditions we prove the desired error bounds.
    Using Lemmas \ref{lemma C.1}, \ref{lemma 6.1} and \ref{lemma D.2.2}, (\ref{eq 32}), (\ref{6.2}) and (\ref{6.3}) hold with probability at least $1-3\delta$.
    Thus, the desired error bounds also hold with probability at least  $1-3\delta$.
    By Lemma \ref{lemma C.6},
    (\ref{eq C.6}) holds. 
    Based on Lemma \ref{lemma C.4}, we obtain that
    \begin{align*}
      \triangleag{T}\leq \frac{\PF}{A_T}\leq \frac{8\mathcal{C}_2}{T^2\beta} + \frac{17}{2}T\beta\mathcal{M}^2\logte.
    \end{align*}
    Since the constraints of $\beta$ in (\ref{beta}), we have
    \begin{align}
      \label{Complete theorem 2}
      \triangleag{T} \leq & \frac{8\mathcal{C}_2}{T^2}\left(L_1+L_2\right)\left(\sqrt{g\left(\Ftwo\right)}+\mathcal{M}\sqrt{\logte}\right) \notag\\
      & + \frac{32\mathcal{C}_2}{T^2} \left(L_0+L_1\left(g\left(\Ftwo\right)\right)^{\frac{p}{2}}+L_2\Ftwo^q +    1156\left(L_1+L_2\right)^4\mathcal{C}_2^2 \right)\notag\\
      & + \frac{8\mathcal{C}_2}{T^\frac{4}{5}}  \left(595\right)^{\frac{2}{5}}\left(L_1+L_2\right)^{\frac{4}{5}}\mathcal{M}^{\frac{4}{5}}\left(\logte\right)^{\frac{2}{5}}  + \frac{8\mathcal{C}_2}{T^{\frac{4}{3}}} \left(595\right)^{\frac{2}{3}}\left(L_1+L_2\right)^{\frac{4}{3}}\mathcal{M}^{\frac{4}{3}}\left(\logte\right)^{\frac{2}{3}}   \notag\\
      & +  \frac{8\mathcal{C}_2\mathcal{M}^2}{T}+ \frac{\mathcal{M}}{\sqrt{T}}  \left(\frac{17}{2}\logte  +  8\mathcal{C}_2\right).
    \end{align}
\end{proof}

\section{Convergence of SNAG under Lipschitz smoothness for stochastic optimization}
\label{section 5}
\begin{sloppypar}
  We apply our analysis to smooth stochastic optimization and demonstrate that SNAG could reach the accelerated convergence rate in expectation under the relaxed affine-variance noises and standard smoothness.
\end{sloppypar}
\subsection{Convergence results}
\begin{theorem}
  \label{theorem 3}
Let $T>0$, $f$ be $L$-smooth and convex. Suppose that $\left\{\xag{t}\right\}_{t\in[T]}$ is a sequence generated by \Algref{algorithm2} with step sizes 
\begin{align}
  \label{setting2}
  \beta=\min\left\{\frac{1}{2L\left(1+B\right)},\frac{1}{\mathcal{Q}^{\frac{1}{2}}T^{\frac{3}{2}}},\frac{1}{\mathcal{Q}T}\right\},
\end{align}
\begin{align}
  \label{eq 61}
  \lambda_t=\frac{\beta}{2\left(1+B\right)}\left(A_t-A_{t-1}\right),
\end{align}
where $\mathcal{Q}=A\Fthree+C$ is a constant depending on the parameters of smoothness and noise with $\Fthree$ defined in (\ref{Fthree}).
Under Assumptions \ref{assumption 1}, \ref{assumption 2} and \ref{assumption 4}, we have\footnote{The detail convergence result is presented in (\ref{Complete theorem 3}).}
\iffalse 
\begin{align*}
      \mathbb{E}\left[\fxag{T}-f^*\right]\leq & \mathcal{O}\left(\frac{L}{T^2}+\sqrt{\frac{A+B+C}{T}}\right).
  \end{align*}
\fi
\begin{align}
  \label{eq 11}
      \mathbb{E}\left[\fxag{T}-f^*\right]\leq & \mathcal{O}\left(\frac{1+B}{T^2}+\sqrt{\frac{A+C}{T}}\right).
  \end{align}
\end{theorem}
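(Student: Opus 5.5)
Under $L$-smoothness the descent lemma holds globally, so the induction on bounded function values used for Theorem \ref{theorem 2} is not needed for step-size validity. The only place a bound on $\trianglemd{t}$ enters is the noise variance $A\trianglemd{t}$. My plan is to follow the one-step recursion of Lemma \ref{lemma C.3}, take conditional expectations, and control $\mathbb{E}[\trianglemd{l}]$ by an auxiliary bound $\Fthree$.

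\textbf{One-step inequality.} Since $f$ is $L$-smooth, I will write
\[
f(\xag{l})\leq f(\xmd{l})-\beta\la\nablaf{l}{md},\vg_l\ra+\tfrac{L\beta^2}{2}\norm{\vg_l}^2 .
\]
I then take $\mathbb{E}_l$. The cross term $\la\rvxi{l},\cdot\ra$ vanishes, and
\[
\mathbb{E}_l\norm{\vg_l}^2\leq (1+B)\norm{\nablaf{l}{md}}^2+A\trianglemd{l}+C .
\]
Combining this with (\ref{1.8}) and (\ref{eq 30}), with $\lambda_l=\frac{\beta}{2(1+B)}(A_l-A_{l-1})$, the coefficient in front of $\norm{\vx_{l-1}-\vx^*}^2-\norm{\vx_l-\vx^*}^2$ becomes $\frac{1+B}{\beta A_l}$. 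The $\norm{\nablaf{l}{md}}^2$ terms carry the factor $(1+B)\left(L\beta^2/2+\lambda_l(A_l-A_{l-1})/(2A_l)\right)\leq \beta/2$, using $\beta\leq 1/(2L(1+B))$ and Lemma \ref{lemma 2.1}. These are dominated by $-\beta\norm{\nablaf{l}{md}}^2$. After multiplying by $A_l$, I expect
\[
\mathbb{E}\Big[A_l\triangleag{l}+\tfrac{1+B}{\beta}\norm{\vx_l-\vx^*}^2\Big]\leq \mathbb{E}\Big[A_{l-1}\triangleag{l-1}+\tfrac{1+B}{\beta}\norm{\vx_{l-1}-\vx^*}^2\Big]+\beta A_l\big(A\,\mathbb{E}[\trianglemd{l}]+C\big).
\]

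\textbf{Bounding $\mathbb{E}[\trianglemd{l}]$; this is the main obstacle.} By convexity, $\xmd{l}$ is a convex combination of $\xag{l-1}$ and $\vx_{l-1}$, so
\[
\trianglemd{l}\leq \triangleag{l-1}+\tfrac{L}{2}\norm{\vx_{l-1}-\vx^*}^2 .
\]
Here I use $\triangle(\vx_{l-1})\leq \frac{L}{2}\norm{\vx_{l-1}-\vx^*}^2$. Both terms are controlled by the potential $\Phi_{l-1}$ divided by $\min\{A_{l-1},(1+B)/\beta\}\geq 1/\beta$, giving $\mathbb{E}[\trianglemd{l}]\lesssim \beta\,\mathbb{E}[\Phi_{l-1}]$, where $L\beta\leq 1$. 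I will then run an induction (a discrete Gr\"onwall argument) on the claim
\[
\mathbb{E}\Phi_l\leq 2\Big(A_0\triangleag{0}+\tfrac{1+B}{\beta}\norm{\vx_0-\vx^*}^2\Big)+2\beta T A_T C .
\]
The added error is at most $\beta^2 A\,T A_T\,\mathbb{E}\Phi\leq \tfrac12\mathbb{E}\Phi$, thanks to $\beta\leq 1/(\mathcal{Q}^{1/2}T^{3/2})$ and $\beta\leq 1/(\mathcal{Q}T)$ together with $A_T\leq T^2+1/\beta$. This identifies $\Fthree$ as a constant of order $\triangleag{0}+(1+B)\norm{\vx_0-\vx^*}^2+1$, so that $\mathbb{E}[\trianglemd{l}]\leq\Fthree$. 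The delicate point is closing this self-referential bound without circularity; I will choose $\Fthree$ first, independent of $\beta$ beyond the stated constraints, and verify the induction.

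\textbf{Final rate.} Summing the recursion gives
\[
\mathbb{E}[\triangleag{T}]\leq \frac{A_0\triangleag{0}+\frac{1+B}{\beta}\norm{\vx_0-\vx^*}^2}{A_T}+\beta T\mathcal{Q} .
\]
Using $A_T\geq T^2/4$, the first term is $\mathcal{O}\big((1+B)/(\beta T^2)\big)$. Inserting the three branches of $\beta$ yields:
\begin{itemize}
\item from $\beta=1/(2L(1+B))$: a term $\mathcal{O}\big(L(1+B)^2/T^2\big)$, i.e. the $(1+B)/T^2$ part;
\item from $\beta=1/(\mathcal{Q}^{1/2}T^{3/2})$: a term $(1+B)\mathcal{Q}^{1/2}/\sqrt{T}$;
\item from $\beta=1/(\mathcal{Q}T)$: a term $\mathcal{Q}/T$.
\end{itemize}
The noise term satisfies $\beta T\mathcal{Q}\leq \sqrt{\mathcal{Q}/T}$. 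Since $\mathcal{Q}=A\Fthree+C$, this gives (\ref{eq 11}).
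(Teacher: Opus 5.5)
Your argument is correct and reaches the same final bound as \eqref{Complete theorem 3} up to constants. Your one-step inequality is the paper's Lemma~\ref{lemma D.1} with the gradient sum dropped. The difference is in how $\mathbb{E}[\triangle_l^{md}]$ is bounded.

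The paper (Lemma~\ref{lemma D.2}) descends from $\vx_t^{ag}$ to $\vx_{t+1}^{md}$, obtaining $\triangle_{t+1}^{md}\le 2\triangle_t^{ag}+L\norm{\vx_{t+1}^{md}-\vx_t^{ag}}^2$. It then controls the displacement through \eqref{eq 44} and the term $-\frac12\beta\sum_l A_l\mathbb{E}\norm{\nabla f(\vx_l^{md})}^2$, which it keeps in Lemma~\ref{lemma D.1} for exactly this purpose. That deliberately reuses the $(L_0,L_1,L_2)$ architecture, where smoothness is only local and $f(\vx_{l-1})-f^*$ cannot be bounded by $\norm{\vx_{l-1}-\vx^*}^2$.

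You instead use convexity at the averaging step plus $f(\vx_{l-1})-f^*\le\frac L2\norm{\vx_{l-1}-\vx^*}^2$. Both pieces then already sit inside the potential $\Phi$, so you need no displacement lemma, no gradient-sum term and no $\beta^{5/2}$ bookkeeping. Your route is shorter, but it relies on global smoothness and does not transfer to Theorem~\ref{theorem 2}.

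One point needs fixing. The constant hidden in $\mathbb{E}[\triangle_l^{md}]\lesssim\beta\,\mathbb{E}[\Phi_{l-1}]$ is $1+\frac{L}{2(1+B)}$, not an absolute constant. Invoking $L\beta\le 1$ there would turn $\beta\Phi$ into $\Phi$, which is useless for closing the induction. So the $\mathcal{F}_3$ your induction needs is of order $\bigl(1+\frac{L}{1+B}\bigr)(\mathcal{C}_3+1)$. By contrast, \eqref{Fthree} grows only like $\sqrt L$. For large $L$ your closing condition therefore fails with the paper's $\mathcal{F}_3$: you are proving the theorem for a larger $\mathcal{F}_3$, hence a different $\beta$ from the one stated.

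The remedy has three parts:
\begin{enumerate}
\item Keep the convex weight: $1-\frac{A_l}{A_{l+1}}=\frac{\sqrt{B_{l+1}}}{A_{l+1}}\le\frac{\sqrt\beta}{2}$, by AM--GM on $A_{l+1}=B_{l+1}+1/\beta$.
\item Keep the factor $\frac{1}{2(1+B)}$ that your one-step computation actually produces on the noise term.
\item Induct directly on $\mathbb{E}[\triangle_k^{md}]\le\mathcal{F}_3$ for $k\le l$.
\end{enumerate}
The induction hypothesis gives $\mathbb{E}\Phi_l\le\frac{\mathcal{C}_3}{\beta}+\frac{\beta TA_T\mathcal{Q}}{2(1+B)}$. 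Since $\beta^2TA_T\mathcal{Q}\le\beta^2T^3\mathcal{Q}+\beta T\mathcal{Q}\le 2$, you get
\[
\mathbb{E}[\triangle_{l+1}^{md}]\le\beta\max\Bigl\{1,\tfrac{L\sqrt\beta}{4(1+B)}\Bigr\}\mathbb{E}\Phi_l\le\max\Bigl\{1,\tfrac{\sqrt L}{4\sqrt2}\Bigr\}(\mathcal{C}_3+1)\le\mathcal{F}_3,
\]
with the paper's $\mathcal{F}_3$. This also removes the self-referential step you flagged: nothing has to be absorbed back into $\mathbb{E}\Phi$.
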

The above theorem relaxes the bounded variance noise assumption in \cite{ghadimi2016accelerated} while providing the optimal expected convergence rate.
Furthermore, Theorem \ref{theorem 3} improves the convergence rate of order $\mathcal{O}\left(1/T+C/\sqrt{T}\right)$ for SGD and SNAG in \cite{yu2025convergence} under the same assumption.
Compared to Theorem \ref{theorem 2}, the suboptimal term $\mathcal{O}\left(\sqrt{B/T}\right)$ with respect to $B$ disappears in (\ref{eq 11}), which aligns with the expected result of $\mathcal{O}\left(\left(B+1\right)/T^2\right)$ and almost-sure result of $o\left(\left(B+1\right)/T^2\right)$ in \cite{hermant2025gradient} where they focused on smooth stochastic optimization with noise satisfying (\ref{strong growth condition}).
\subsection{Proof of Theorem \ref{theorem 3}}
\label{proof of theorem 3}
We first provide the following lemma as a key to the induction argument in Lemma \ref{lemma D.2}.
\begin{lemma}
  \label{lemma D.1}
  Under the conditions of Theorem \ref{theorem 3}, for all $t\in[T]$, it holds that
 \begin{align*}
  &\mathbb{E}\left[A_t \triangleag{t}\right]+ \frac{1+B}{\beta}\mathbb{E}\left[\norm{\vx_t-\vx^*}^2\right] \notag\\
  \leq& \frac{\mathcal{C}_3}{\beta}-\frac{1}{2}\beta \sumlimits{l=1}{t}A_l\mathbb{E}\left[\norm{\nablaf{l}{md}}^2\right]+\frac{1}{2\left(1+B\right)}\beta \sumlimits{l=1}{t}A_l \mathbb{E}\left[A\trianglemd{l}+C\right],
\end{align*}
where 
 \begin{align}
    \label{C3}
    \mathcal{C}_3=\triangleag{0}+\left(1+B\right)\norm{\vx_0-\vx^*}^2.
  \end{align}
\end{lemma}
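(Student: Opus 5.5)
The plan is to mimic the one-step analysis of Lemma \ref{lemma C.3}, now with an $L$-smooth $f$, and then take expectations so that the martingale terms vanish. Since $f$ is $L$-smooth, the descent inequality holds globally and no localization is needed. It gives
\begin{align*}
\fxag{l}\leq \fxmd{l}-\beta\la\nablaf{l}{md},\vg_l\ra+\frac{L\beta^2}{2}\norm{\vg_l}^2 .
\end{align*}
As in (\ref{1.8}), which only uses convexity and the update of $\xmd{l}$, we have $\fxmd{l}\leq \frac{A_{l-1}}{A_l}\fxag{l-1}+\left(1-\frac{A_{l-1}}{A_l}\right)f^*+\frac{A_l-A_{l-1}}{A_l}\la\nablaf{l}{md},\vx_{l-1}-\vx^*\ra$.

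The term $\la\nablaf{l}{md},\vx_{l-1}-\vx^*\ra$ is rewritten as $\la\vg_l,\vx_{l-1}-\vx^*\ra-\la\rvxi{l},\vx_{l-1}-\vx^*\ra$. The first piece is handled by the identity (\ref{eq 30}):
\begin{align*}
\la\vg_l,\vx_{l-1}-\vx^*\ra=\frac{1}{2\lambda_l}\left[\norm{\vx_{l-1}-\vx^*}^2-\norm{\vx_l-\vx^*}^2\right]+\frac{\lambda_l}{2}\norm{\vg_l}^2 .
\end{align*}
With $\lambda_l=\frac{\beta}{2(1+B)}(A_l-A_{l-1})$ we get $\frac{A_l-A_{l-1}}{2A_l\lambda_l}=\frac{1+B}{A_l\beta}$, which produces the weight $\frac{1+B}{\beta}$ on the distance term. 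Lemma \ref{lemma 2.1} gives $\frac{(A_l-A_{l-1})\lambda_l}{2A_l}\leq\frac{\beta}{4(1+B)}$. Multiplying by $A_l$, the one-step inequality reads
\begin{align*}
A_l\triangleag{l}+\frac{1+B}{\beta}\norm{\vx_l-\vx^*}^2\leq{}& A_{l-1}\triangleag{l-1}+\frac{1+B}{\beta}\norm{\vx_{l-1}-\vx^*}^2-\beta A_l\la\nablaf{l}{md},\vg_l\ra\\
&+A_l\left(\frac{L\beta^2}{2}+\frac{\beta}{4(1+B)}\right)\norm{\vg_l}^2+(A_l-A_{l-1})\la\rvxi{l},\vx^*-\vx_{l-1}\ra .
\end{align*}

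Next take the conditional expectation $\mathbb{E}_l$. Because $\xmd{l}$ and $\vx_{l-1}$ are measurable with respect to $\rvz_1,\dots,\rvz_{l-1}$, Assumption \ref{assumption 2} kills the last inner product and turns $-\beta A_l\la\nablaf{l}{md},\vg_l\ra$ into $-\beta A_l\norm{\nablaf{l}{md}}^2$. Assumption \ref{assumption 4} gives
\begin{align*}
\mathbb{E}_l\norm{\vg_l}^2\leq(1+B)\norm{\nablaf{l}{md}}^2+A\trianglemd{l}+C .
\end{align*}
The coefficient of $\norm{\vg_l}^2$ is at most $\frac{\beta}{2(1+B)}$: indeed $\beta\leq\frac{1}{2L(1+B)}$ gives $\frac{L\beta^2}{2}\leq\frac{\beta}{4(1+B)}$, and the other part is $\frac{\beta}{4(1+B)}$ already. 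Hence the gradient contribution is at most $\frac{\beta}{2}A_l\norm{\nablaf{l}{md}}^2$, and the net gradient term is $-\frac{\beta}{2}A_l\norm{\nablaf{l}{md}}^2$. The noise contribution is at most $\frac{\beta}{2(1+B)}A_l\left(A\trianglemd{l}+C\right)$.

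Finally, take full expectations and telescope over $l\in[t]$. The starting quantity is
\begin{align*}
A_0\triangleag{0}+\frac{1+B}{\beta}\norm{\vx_0-\vx^*}^2=\frac{1}{\beta}\left(\triangleag{0}+(1+B)\norm{\vx_0-\vx^*}^2\right)=\frac{\mathcal{C}_3}{\beta},
\end{align*}
using $A_0=1/\beta$, and the claimed bound follows.

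The main obstacle is the bookkeeping of the coefficient of $\norm{\vg_l}^2$. I must avoid the lossy split (\ref{eq 31}) and keep $\vg_l$ intact so that the cross term becomes exact in expectation. Then the factor $1+B$ from the noise has to be absorbed exactly by the choices of $\lambda_l$ and $\beta$, leaving $-\frac12\beta A_l\norm{\nablaf{l}{md}}^2$ with a constant of precisely $\frac{1}{2(1+B)}$ on the noise term. Integrability of all terms is also needed; it follows inductively from the finite second moments under Assumption \ref{assumption 4}.
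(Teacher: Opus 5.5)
Your proof is correct and follows essentially the same route as the paper: the descent inequality at $\xmd{l}$, the convexity bound (\ref{1.8}), the identity (\ref{eq 30}) with $\lambda_l=\frac{\beta}{2(1+B)}(A_l-A_{l-1})$, unbiasedness to remove the cross terms, Assumption \ref{assumption 4}, and $\beta\le\frac{1}{2L(1+B)}$ to obtain the constants $\frac12$ and $\frac{1}{2(1+B)}$, then telescoping with $A_0=1/\beta$. The only difference is bookkeeping: the paper expands $\norm{\nablaf{l}{md}+\rvxi{l}}^2$ exactly before taking expectations (it uses only the equality part of (\ref{eq 31}) here, not the lossy inequality), while you keep $\vg_l$ intact and use $\mathbb{E}_l\norm{\vg_l}^2=\norm{\nablaf{l}{md}}^2+\mathbb{E}_l\norm{\rvxi{l}}^2$, which amounts to the same computation.
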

\begin{proof}
  By the descent lemma for Lipschitz smooth functions and the iteration step in Algorithm \ref{algorithm2}, we have
  \begin{align*}
    \fxag{l}\leq & \fxmd{l}+\la \nablaf{l}{md},\xag{l}-\xmd{l}\ra+\frac{L}{2}\norm{\xag{l}-\xmd{l}}^2\notag\\
        = & \fxmd{l}-\beta\norm{\nablaf{l}{md}}^2-\beta\la \nablaf{l}{md},\rvxi{l}\ra+\frac{L}{2}\beta^2\norm{\nablaf{l}{md}+\rvxi{l}}^2.
  \end{align*}
  Note that (\ref{1.8}), (\ref{eq 30}) and (\ref{eq 31}) still holds here as they are independent of the smoothness condition. Thus,
  \begin{align}
    \label{eq 81}
      \fxag{l} \leq & \frac{A_{l-1}}{A_l} \fxag{l-1}+\left(1-\frac{A_{l-1}}{A_l}\right)f^* + \left(1-\frac{A_{l-1}}{A_l}\right) \la \nablaf{l}{md},\vx_{l-1}-\vx^*\ra \notag\\
      & -\beta\norm{\nablaf{l}{md}}^2-\beta\la \nablaf{l}{md},\rvxi{l}\ra +\frac{L}{2}\beta^2\norm{\nablaf{l}{md}+\rvxi{l}}^2\notag\\
      \leq & \frac{A_{l-1}}{A_l} \fxag{l-1}+\left(1-\frac{A_{l-1}}{A_l}\right)f^*+\frac{A_l-A_{l-1}}{2A_l\cdot\lambda_l}\left[\norm{\vx_{l-1}-\vx^*}^2-\norm{\vx_l-\vx^*}^2\right]\notag\\
        & -\beta\left(1-\frac{L\beta}{2}-\frac{\lambda_l\left(A_l-A_{l-1}\right)}{2\beta A_l}\right)\norm{\nablaf{l}{md}}^2+\left(\frac{L\beta^2}{2}+\frac{\lambda_l\left(A_l-A_{l-1}\right)}{2A_l}\right)\norm{\rvxi{l}}^2\notag\\
        &+\la \rvxi{l},\left(-\beta+L\beta^2+\frac{\lambda_l\left(A_l-A_{l-1}\right)}{A_l}\right)\nablaf{l}{md}\ra +\la \rvxi{l},\frac{A_l-A_{l-1}}{A_l}\left(\vx^*-\vx_{l-1}\right)\ra.
  \end{align}
By Assumption \ref{assumption 4}, we obtain that for all $l\in[T]$, 
\begin{align}
  \label{eq 82.1}
  \mathbb{E}\left[\norm{\rvxi{_l}}^2\right]=\mathbb{E}\left[\mathbb{E}_l\left[\norm{\rvxi{l}}^2\right] \right]\leq \mathbb{E}\left[A \trianglemd{l}+B \norm{\nablaf{l}{md}}^2+C\right].
\end{align}
With multiplying $A_l$ and taking expectation on both sides of (\ref{eq 81}), we have 
\begin{align}
  \label{eq 82}
   \mathbb{E}\left[A_l \triangleag{l}\right] \leq & \mathbb{E} \left[A_{l-1} \triangleag{l-1}\right] +\frac{A_l-A_{l-1}}{2\lambda_l}\mathbb{E}\left[\norm{\vx_{l-1}-\vx^*}^2-\norm{\vx_l-\vx^*}^2\right] \notag\\
  & - \beta A_l\left(1-\frac{L\beta}{2}-\frac{\lambda_l\left(A_l-A_{l-1}\right)}{2\beta A_l}\right)\mathbb{E} \left[\norm{\nablaf{l}{md}}^2\right] \notag\\
  & +\beta A_l \left(\frac{L\beta}{2}+\frac{\lambda\left(A_l-A_{l-1}\right)}{2\beta A_l}\right)\mathbb{E}\left[\norm{\rvxi{l}}^2\right]\notag\\
  \leq & \mathbb{E} \left[A_{l-1} \triangleag{l-1}\right] +\frac{A_l-A_{l-1}}{2\lambda_l}\mathbb{E}\left[\norm{\vx_{l-1}-\vx^*}^2-\norm{\vx_l-\vx^*}^2\right] \notag\\
  & - \beta A_l\left(1-\frac{L\beta}{2}-\frac{\lambda_l\left(A_l-A_{l-1}\right)}{2\beta A_l}\right)\mathbb{E} \left[\norm{\nablaf{l}{md}}^2\right]\notag\\
  & + \beta A_l \left(\frac{L\beta}{2}+\frac{\lambda_l\left(A_l-A_{l-1}\right)}{2\beta A_l}\right)  \mathbb{E}\left[A \trianglemd{l}+B \norm{\nablaf{l}{md}}^2+C\right]\notag\\
  = & \mathbb{E} \left[A_{l-1} \triangleag{l-1}\right] +\frac{A_l-A_{l-1}}{2\lambda_l}\mathbb{E}\left[\norm{\vx_{l-1}-\vx^*}^2-\norm{\vx_l-\vx^*}^2\right] \notag\\
  & - \beta A_l \left(1- \left(1+B\right) \left(\frac{L\beta}{2}+\frac{\lambda_l\left(A_l-A_{l-1}\right)}{2\beta A_l}\right) \right) \mathbb{E}\left[\norm{\nablaf{l}{md}}^2\right] \notag\\
  & + \beta A_l \left(\frac{L\beta}{2}+\frac{\lambda_l\left(A_l-A_{l-1}\right)}{2\beta A_l}\right)  \mathbb{E} \left[A \trianglemd{l}+C\right],
\end{align} 
where the second inequality follows from (\ref{eq 82.1}).
Since $\lambda_k=\frac{\beta}{2(1+B)}\left(A_k-A_{k-1}\right)$, we have 
\begin{align*}
\frac{A_l-A_{l-1}}{2\lambda_l}= \frac{1+B}{\beta},
\end{align*}
and
\begin{align*}
  \frac{\lambda_l\left(A_l-A_{l-1}\right)}{2\beta A_l}=\frac{\left(A_l-A_{l-1}\right)^2}{4 A_l\left(1+B\right)}\leq \frac{1}{4\left(1+B\right)},
\end{align*}
where the inequality follows from Lemma \ref{lemma 2.1}.
Combining with (\ref{eq 82}), we have 
\begin{align*}
\mathbb{E}\left[A_l \triangleag{l}\right]  \leq &  \mathbb{E} \left[A_{l-1} \triangleag{l-1}\right] +\frac{1+B}{\beta}\mathbb{E}\left[\norm{\vx_{l-1}-\vx^*}^2-\norm{\vx_l-\vx^*}^2\right] \notag\\
& - \beta A_l\left(1-\left(1+B\right)\left(\frac{L\beta}{2}+\frac{1}{4\left(1+B\right)}\right)\right) \mathbb{E}\left[\norm{\nablaf{l}{md}}^2\right]\notag\\
& + \beta A_l\left(\frac{L \beta}{2}+\frac{1}{4\left(1+B\right)}\right) \mathbb{E} \left[A \trianglemd{l}+C\right]\notag\\
\leq & \mathbb{E} \left[A_{l-1} \triangleag{l-1}\right] +\frac{1+B}{\beta}\mathbb{E}\left[\norm{\vx_{l-1}-\vx^*}^2-\norm{\vx_l-\vx^*}^2\right] \notag\\
& - \frac{1}{2}\beta A_l \mathbb{E}\left[\norm{\nablaf{l}{md}}^2\right]+\frac{1}{2\left(1+B\right)}\beta A_l \mathbb{E}\left[A\trianglemd{l}+C\right],
\end{align*}
where the last inequality follows from $\beta \leq \frac{1}{2L\left(1+B\right)}$.
Re-arranging the above inequality and summing up over $l\in[t]$, we obtain that 
\begin{align*}
  &\mathbb{E}\left[A_t \triangleag{t}\right]+ \frac{1+B}{\beta}\mathbb{E}\left[\norm{\vx_t-\vx^*}^2\right]\notag\\
  \leq & A_0\triangleag{0}+\frac{1+B}{\beta}\norm{\vx_0-\vx^*}^2-\frac{1}{2}\beta \sumlimits{l=1}{t}A_l\mathbb{E}\left[\norm{\nablaf{l}{md}}^2\right] +\frac{1}{2\left(1+B\right)}\beta \sumlimits{l=1}{t}A_l \mathbb{E}\left[A\trianglemd{l}+C\right]\notag\\
  = & \frac{\mathcal{C}_3}{\beta}-\frac{1}{2}\beta \sumlimits{l=1}{t}A_l\mathbb{E}\left[\norm{\nablaf{l}{md}}^2\right]+\frac{1}{2\left(1+B\right)}\beta \sumlimits{l=1}{t}A_l \mathbb{E}\left[A\trianglemd{l}+C\right],
\end{align*}
where the last line holds since $A_0=1/\beta$.
\end{proof}
Similar to Lemma \ref{lemma C.6}, we will bound the function value gap in expectation by induction.
\begin{lemma}
  \label{lemma D.2}
  Under the condition of Theorem \ref{theorem 3}, we have 
  \begin{align*}
    \mathbb{E}\left[\fxmd{t}-f^*\right]\leq \Fthree, \forall t\in[T],
  \end{align*}
  where 
  \begin{align}
    \label{Fthree}
  \Fthree=\left(2+5\sqrt{2L\left(1+B\right)}\right)\mathcal{C}_3+1+10\sqrt{2L},
  \end{align}
  with $\mathcal{C}_3$ defined in (\ref{C3}).
\end{lemma}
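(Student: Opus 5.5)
The proof goes by induction on $t$, following the pattern of Lemma \ref{lemma C.3.1} and Lemma \ref{lemma C.6}, but now carried out in expectation. The base case is immediate: $\xmd{1}=\xag{0}=\vx_0$, so $\mathbb{E}[\trianglemd{1}]=\triangleag{0}\leq\mathcal{C}_3\leq\Fthree$. For the inductive step, assume $\mathbb{E}[\trianglemd{l}]\leq\Fthree$ for all $l\in[t]$; the goal is to bound $\mathbb{E}[\trianglemd{t+1}]$.

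\textbf{Step 1 (bound on the potential).} Apply Lemma \ref{lemma D.1} and insert the hypothesis $\mathbb{E}[A\trianglemd{l}+C]\leq A\Fthree+C=\mathcal{Q}$ together with $A_l\leq A_T$. This gives, simultaneously,
\begin{align*}
\mathbb{E}\left[A_t\triangleag{t}\right]+\frac{1+B}{\beta}\mathbb{E}\left[\norm{\vx_t-\vx^*}^2\right]+\frac{1}{2}\beta\sumlimits{l=1}{t}A_l\mathbb{E}\left[\norm{\nablaf{l}{md}}^2\right]\leq\frac{\mathcal{C}_3}{\beta}+\frac{\beta TA_T\mathcal{Q}}{2(1+B)}.
\end{align*}
By Lemma \ref{lemma 2.1}, $A_T\leq T^2+1/\beta$. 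The step-size constraints $\beta\leq 1/(\mathcal{Q}^{1/2}T^{3/2})$ and $\beta\leq 1/(\mathcal{Q}T)$ then give $\beta^2TA_T\mathcal{Q}\leq\beta^2T^3\mathcal{Q}+\beta T\mathcal{Q}\leq 2$. Since $A_t\geq 1/\beta$, this yields $\mathbb{E}[\triangleag{t}]\leq\mathcal{C}_3+1$. It also gives $\beta\sum_{l\leq t}A_l\mathbb{E}\norm{\nablaf{l}{md}}^2\leq 2\mathcal{C}_3/\beta+2/\beta$.

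\textbf{Step 2 (gap between consecutive points).} Next bound $\mathbb{E}\norm{\xmd{t+1}-\xag{t}}^2$ using Lemma \ref{lemma C.5}. Here $\lambda_i=\beta(A_i-A_{i-1})/(2(1+B))$, so $(\lambda_i-\beta)^2\leq 2\lambda_i^2+2\beta^2$. Then Lemma \ref{lemma 2.1}, i.e.\ $(A_i-A_{i-1})^2=B_i\leq A_i$ and $A_i-A_{i-1}\geq 1$, reduces the bound, exactly as in (\ref{eq 58}), to
\begin{align*}
\mathbb{E}\norm{\xmd{t+1}-\xag{t}}^2\leq c\,\beta^{5/2}\sumlimits{i=1}{t}A_i\,\mathbb{E}\left[\norm{\nablaf{i}{md}}^2+\norm{\rvxi{i}}^2\right],
\end{align*}
with $c$ an absolute constant. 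Assumption \ref{assumption 4} gives $\mathbb{E}\norm{\rvxi{i}}^2\leq(1+B)\mathbb{E}\norm{\nablaf{i}{md}}^2+\ldots$; more precisely $\mathbb{E}\norm{\rvxi{i}}^2\leq\mathcal{Q}+B\,\mathbb{E}\norm{\nablaf{i}{md}}^2$. Combining with Step 1, the gradient part contributes $O\big((1+B)\sqrt{\beta}(\mathcal{C}_3+1)\big)$. The $\mathcal{Q}$ part contributes $\beta^{5/2}TA_T\mathcal{Q}\leq 2\sqrt{\beta}$.

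\textbf{Step 3 (descent across the gap).} Use the $L$-smooth descent lemma together with Young's inequality and $\norm{\nabla f(\vx)}^2\leq 2L(f(\vx)-f^*)$:
\begin{align*}
\trianglemd{t+1}\leq\triangleag{t}+\frac{\norm{\nablaf{t}{ag}}^2}{2L}+L\norm{\xmd{t+1}-\xag{t}}^2\leq 2\triangleag{t}+L\norm{\xmd{t+1}-\xag{t}}^2.
\end{align*}
Taking expectations, the first term gives $2(\mathcal{C}_3+1)$. The second is at most $L\sqrt{\beta}\cdot O\big((1+B)(\mathcal{C}_3+1)\big)$. Using $\beta\leq 1/(2L(1+B))$, we get $L\sqrt{\beta}(1+B)\leq\sqrt{L(1+B)/2}$, and the terms without $(1+B)$ are handled through $L\sqrt{\beta}\leq\sqrt{L/2}$. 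This produces exactly the shape $(2+c'\sqrt{2L(1+B)})\mathcal{C}_3+c''+c'''\sqrt{2L}$ of $\Fthree$, which closes the induction.

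\textbf{Main obstacle.} The delicate step is the constant bookkeeping in Steps 2--3. The $B\,\mathbb{E}\norm{\nabla f}^2$ part of the noise must be absorbed through the gradient sum from Lemma \ref{lemma D.1}, at a cost of a factor $(1+B)$. That factor must then be cancelled exactly by $\sqrt{\beta}\leq 1/\sqrt{2L(1+B)}$, so the resulting coefficients have to fit the numerical constants $5$ and $10$ in (\ref{Fthree}). There is also an apparent circularity, since $\mathcal{Q}$ depends on $\Fthree$. It is harmless: $\beta$ is fixed in advance using $\mathcal{Q}$, and the induction only ever uses $\mathbb{E}[A\trianglemd{l}+C]\leq\mathcal{Q}$ for earlier indices.
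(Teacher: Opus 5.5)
Your route matches the paper's: induction on $\mathbb{E}[\trianglemd{t}]$, Lemma~\ref{lemma D.1} as the potential, the estimate $\norm{\xmd{t+1}-\xag{t}}^2\le 5\beta^{5/2}\sum_{i\le t}A_i(\norm{\nabla f(\xmd{i})}^2+\norm{\rvxi{i}}^2)$, and $\trianglemd{t+1}\le 2\triangleag{t}+L\norm{\xmd{t+1}-\xag{t}}^2$. However, the bookkeeping you write down does not close the induction for the stated $\Fthree$. The ``main obstacle'' you flag is resolved incorrectly in two places.

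\textbf{First problem.} In Step 1 you bound the noise term of Lemma~\ref{lemma D.1} by $2/\beta$, which drops its factor $1/(1+B)$. The correct consequence is $\beta\sum_{l\le t}A_l\mathbb{E}\norm{\nabla f(\xmd{l})}^2\le 2\mathcal{C}_3/\beta+2/(\beta(1+B))$. With your version, the gradient part in Step 2, after multiplication by $5\beta^{5/2}(1+B)$, contains the non-$\mathcal{C}_3$ term $10(1+B)\sqrt{\beta}$. Multiplying by $L$ and using $\sqrt{\beta}\le(2L(1+B))^{-1/2}$ turns this into $5\sqrt{2L(1+B)}$.
\begin{itemize}
\item This term grows with $B$, so it is not of the form $c''+c'''\sqrt{2L}$.
\item It cannot be hidden in the $\mathcal{C}_3$-term, because $\mathcal{C}_3$ can be arbitrarily small.
\item Your remark that such terms are handled by $L\sqrt{\beta}\le\sqrt{L/2}$ contradicts your own Step 2.
\end{itemize}
If you keep the factor, $(1+B)$ cancels on the noise part. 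Then $\mathbb{E}\norm{\xmd{t+1}-\xag{t}}^2\le 10\sqrt{\beta}(1+B)\mathcal{C}_3+20\sqrt{\beta}$, and hence $L\,\mathbb{E}\norm{\xmd{t+1}-\xag{t}}^2\le 5\sqrt{2L(1+B)}\,\mathcal{C}_3+10\sqrt{2L}/\sqrt{1+B}$, exactly as in the paper.

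\textbf{Second problem.} Even with the first fix, your bound $\mathbb{E}[\triangleag{t}]\le\mathcal{C}_3+1$ is too weak. It gives $2\mathcal{C}_3+2$ in Step 3. At $B=0$ the resulting upper bound is $(2+5\sqrt{2L})\mathcal{C}_3+2+10\sqrt{2L}$, which exceeds $\Fthree$ by $1$, so the induction hypothesis is not reproduced. The loss comes from replacing $\sum_{l\le t}A_l$ by $TA_T$ before dividing by $A_t\ge 1/\beta$, which costs the factor $\beta A_T\le \beta T^2+1$. Instead, divide Lemma~\ref{lemma D.1} by $A_t$ first and use $A_l\le A_t$ for $l\le t$. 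By $\beta\le 1/(\mathcal{Q}T)$ this gives $\mathbb{E}[\triangleag{t}]\le\mathcal{C}_3+\beta t\mathcal{Q}/(2(1+B))\le\mathcal{C}_3+\tfrac12$, so $2\mathbb{E}[\triangleag{t}]\le 2\mathcal{C}_3+1$. With both corrections the pieces sum to at most $\Fthree$ and the induction closes.
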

\begin{proof}
  We will prove this lemma by induction.
    Obviously, we have
    \begin{align*}
      \mathbb{E}\left[\fxmd{1}-f^*\right]=\fxag{0}-f^*\leq \Fthree.
    \end{align*}
    Suppose that for some $t\in[T]$, 
    \begin{align*}
       \mathbb{E}\left[\fxmd{l}-f^*\right]\leq \Fthree, \forall l\in[t].
    \end{align*}
    Next, we will bound $\mathbb{E}\left[\fxmd{t+1}-f^*\right]$.
    Since (\ref{eq 44}) is independent of the smoothness condition, it still holds here.
    \begin{align*}
      \norm{\xmd{t+1}-\xag{t}}^2 \leq & 2\sumlimits{i=1}{t} \frac{\lambda_i^2+\beta^2}{A_i-A_{i-1}}\norm{\nablaf{i}{md}+\rvxi{i}}^2\notag\\
      \leq & 2\sumlimits{i=1}{t}\frac{\frac{1}{4}\left(A_i-A_{i-1}\right)^2\beta^2+\beta^2}{A_i-A_{i-1}}\norm{\nablaf{i}{md}+\rvxi{i}}^2\notag\\
      \leq & \frac{5}{2}\beta^2\sumlimits{i=1}{t} \left(A_i-A_{i-1}\right)\norm{\nablaf{i}{md}+\rvxi{i}}^2\notag\\
      \leq & 5\beta^2\sumlimits{i=1}{t}\left(A_i-A_{i-1}\right)\left(\norm{\nablaf{i}{md}}^2+\norm{\rvxi{i}}^2\right),
    \end{align*}
    where the second inequality holds since the constraint of $\lambda_i$ in \eqref{eq 61}, the third inequality follows from Lemma \ref{lemma 2.1} and the last inequality holds since $\norm{\va+\vb}^2\leq 2\left(\norm{\va}^2+\norm{\vb}^2\right)$.
    Applying Lemma \ref{lemma 2.1} again and using the fact that
    \begin{align*}
      \sqrt{\beta A_t}=\sqrt{\beta \left(B_t+1/\beta\right)}\geq 1, \forall t\in[T],
    \end{align*}
    we have 
    \begin{align}
      \label{eq 47}
      \norm{\xmd{t+1}-\xag{t}}^2\leq & 5\beta^2\sumlimits{i=1}{t}\sqrt{A_i}\left(\norm{\nablaf{i}{md}}^2+\norm{\rvxi{i}}^2\right)\notag\\
      \leq  & 5\beta^{\frac{5}{2}}\sumlimits{i=1}{t}A_i\left(\norm{\nablaf{i}{md}}^2+\norm{\rvxi{i}}^2\right).
    \end{align}
    Taking expectation on both sides of the above inequality and combining with (\ref{eq 82.1}), we obtain that
    \begin{align*}
       \mathbb{E}\left[\norm{\xmd{t+1}-\xag{t}}^2\right] \leq & 5 \beta^{\frac{5}{2}}\sumlimits{i=1}{t}A_i \left(\mathbb{E}\left[\norm{\nablaf{i}{md}}^2\right]+\mathbb{E}\left[A\trianglemd{i}+B\norm{\nablaf{i}{md}}^2+C\right]\right)\notag\\
      = & 5 \beta^{\frac{5}{2}}\left(1+B\right)\sumlimits{i=1}{t}A_i\mathbb{E} \left[\norm{\nabPsi{i}{md}}^2\right]+5\beta^{\frac{5}{2}}\sumlimits{i=1}{t}A_i\mathbb{E}\left[A\trianglemd{i}+C\right]\notag\\
      \leq & 10\beta^{\frac{1}{2}}\left(1+B\right)\mathcal{C}_3+10\beta^{\frac{5}{2}}\sumlimits{i=1}{t}A_i\mathbb{E}\left[A\trianglemd{i}+C\right]\notag\\
      \leq & 10\beta^{\frac{1}{2}}\left(1+B\right)\mathcal{C}_3+10\beta^{\frac{5}{2}}\cdot T \cdot A_T \left(A\Fthree+C\right)\notag\\
      \leq & 10\beta^{\frac{1}{2}}\left(1+B\right)\mathcal{C}_3+10\beta^{\frac{5}{2}}T^3\left(A\Fthree+C\right)+10\beta^{\frac{3}{2}}T\left(A\Fthree+C\right),
    \end{align*}
    where the second inequality follows from Lemma \ref{lemma D.1}, the third inequality holds since $A_i\leq A_T, \forall i\in[t]$ and the assumption that $\mathbb{E}\left[\trianglemd{i}\right]\leq \Fthree, \forall i\in[t]$, and the last inequality follows from Lemma \ref{lemma 2.1} with $A_t=B_t+1/\beta, \forall t\in[T]$.
    Since the constraints of $\beta$ in (\ref{setting2}), we have
    \begin{align*}
      \mathbb{E}\left[\norm{\xmd{t+1}-\xag{t}}^2\right]\leq & \frac{5\sqrt{2\left(1+B\right)}}{\sqrt{L}}\mathcal{C}_3+\frac{10\sqrt{2}}{\sqrt{L\left(1+B\right)}}.
    \end{align*}
    Applying the descent lemma again, we obtain that 
    \begin{align*}
      \fxmd{t+1}\leq & \fxag{t}+\la \nablaf{t}{ag}, \xmd{t+1}-\xag{t}\ra+\frac{L}{2}\norm{\xmd{t+1}-\xag{t}}^2\notag\\
      \leq & \fxag{t}+\norm{\nablaf{t}{ag}}\cdot \norm{\xmd{t+1}-\xag{t}}+\frac{L}{2}\norm{\xmd{t+1}-\xag{t}}^2\notag\\
      \leq & \fxag{t}+\frac{1}{2L}\norm{\nablaf{t}{ag}}^2+\frac{L}{2}\norm{\xmd{t+1}-\xag{t}}^2+\frac{L}{2}\norm{\xmd{t+1}-\xag{t}}^2\notag\\
      \leq & \fxag{t}+\left(\fxag{t}-f^*\right)+L\cdot \norm{\xmd{t+1}-\xag{t}}^2,
    \end{align*}
    where we apply Cauchy-Schwarz inequality in the second inequality and apply Young's inequality in the third line. The last inequality follows from Lemma \ref{Lemma A.1}.
    Subtracting $f^*$ from both sides and taking expectation, we have 
    \begin{align*}
      \mathbb{E}\left[\trianglemd{t+1}\right]\leq & 2\mathbb{E}\left[\triangleag{t}\right]+L\cdot \mathbb{E}\left[\norm{\xmd{t+1}-\xag{t}}^2\right].
    \end{align*}
    With the assumption that $\fxmd{l}-f^*\leq \Fthree, \forall l\in[t]$ and applying Lemma \ref{lemma D.1}, we obtain that 
    \begin{align*}
      \mathbb{E}\left[\triangleag{t}\right]\leq & \frac{1}{A_t\beta} \mathcal{C}_3+\frac{1}{2A_t\left(1+B\right)}\beta\cdot \sumlimits{i=1}{t}A_i\mathbb{E}\left[A\trianglemd{l}+C\right]\notag\\
      \leq  &\mathcal{C}_3+\frac{1}{2\left(1+B\right)}\beta T\left(A\Fthree+C\right)
      \leq  \mathcal{C}_3+\frac{1}{2},
    \end{align*}
    where the second inequality holds since $A_t\geq 1/\beta$ and $A_i\leq A_t, \forall \in[t]$, and the last line follows from the definition of $\beta$.
    Therefore, we have 
    \begin{align*}
      \mathbb{E}\left[\trianglemd{t+1}\right]\leq & \left(2+5\sqrt{2L\left(1+B\right)}\right)\mathcal{C}_3+1+10\sqrt{2L}=\Fthree.
    \end{align*}
    Now we finish the induction and obtain the desired result.
\end{proof}
Based on Lemma \ref{lemma D.1} and Lemma \ref{lemma D.2}, we could obtain the final convergence rate.
\begin{proof}[Proof of Theorem  \ref{theorem 3}]
    By Lemma \ref{lemma D.2}, we have $\mathbb{E}\left[\fxmd{t}-f^*\right]\leq \Fthree, \forall t\in[T]$.
    Then, combining Lemma \ref{lemma D.1}, Assumption \ref{assumption 4} and the fact that $A_t\leq A_T, \forall t\in[T]$, we obtain that  
    \begin{align}
      \label{Complete theorem 3}
      \mathbb{E}\left[\fxag{T}-f^*\right]\leq & \frac{1}{A_T\beta}\mathcal{C}_3+\frac{\beta}{2A_T\left(1+B\right)}T\cdot A_T \left(A\Fthree+C\right)\notag\\
      \leq & \frac{8L\left(1+B\right)\mathcal{C}_3}{T^2}+\frac{4\mathcal{C}_3\mathcal{Q}}{T}+\frac{4\mathcal{C}_3\sqrt{\mathcal{Q}}}{\sqrt{T}}+ \frac{\sqrt{\mathcal{Q}}}{2\sqrt{T}},
    \end{align}
    where the second inequality holds since Lemma \ref{lemma 2.1} and the setting of $\beta$ in (\ref{setting2}).
\end{proof}

%We refer to \Secref{proof of theorem 3} for the complete proof of \ref{theorem 3}.

\section{Numerical experiment}
\label{numerical experiment}
In this section, we show the practical convergence behavior of SNAG with two different step setting (see Algorithm \ref{algorithm2}, Algorithm \ref{algorithm3}) compared to SGD, i.e., 
\begin{align}
  \vx_{t+1}=\vx_t-\eta \nabla_\vz f(\vx_t;\vz_t),
\end{align}
on the two-layer neural network (\ref{two layer}) and phase retrieval model (\ref{phase retrieval}).
We prove that both the two models satisfy the $\left(L_0,L_1,L_2\right)$-smoothness condition in Section \ref{section B}.

\textbf{Two-layer neural network}
Considering the following problem,
  \begin{align}
    \label{two layer}
    \min\limits_{\vx\in\mathbb{R}^{\tilde{d}}}F(\vx)=\frac{1}{n}\sumlimits{i=1}{n}f\left(y_i\Phi\left(\vx,\vw_i\right)\right),
  \end{align}
  where $\vw_i\in\mathbb{R}^d$ is the data point and its associated label $y_i\in\left\{\pm 1\right\}$. The function $f(\cdot)$ is the exponential loss i.e., $f(t)=\exp(-t)$   
  and $\Phi(\cdot)$ is a two-layer neural network with $m$ neurons defined as 
  \begin{align*}
    \Phi(\vx, \vw)=\sumlimits{j=1}{m}a_j\sigma(\la \vx_j,\vw\ra),
  \end{align*}
  Here $\sigma:\mathbb{R}\rightarrow \mathbb{R}$ is the activation function and $\vx_j\in\mathbb{R}^d$ denotes the input weight vector of the $j$th hidden neuron. 
  $\vx\in\mathbb{R}^{\tilde{d}}$ represents the concatenation of these weights, i.e., $\vx=\left[\vx_1;\vx_2;\cdots;\vx_m\right]$ where $\tilde{d}=md$.
  We assume that only $\vx_j$ can be updated during training, while $\va_j\in\mathbb{R}$ are initialized randomly and kept fixed.

We conduct experiment on the specific shallow neural network  with $m=30$ hidden neurons, exponential loss $f(t)=\exp(-t)$ and smoothed-leaky-ReLU activation function, i.e.,
\begin{align*}
  \sigma(t)= t \mathbb{I}(t\geq 0)+0.2 t\mathbb{I}(t<0),
\end{align*}
where $\mathbb{I}(\cdot)$ is the $0-1$ indicator function. 
We generate the data point $\vw_i\in\mathbb{R}^d$, where dimension $d=10$, coordinate-wise from Gaussian distribution $\mathcal{N}\left(0,25\right)$ with its binary label $y_i\in\left\{\pm 1\right\}$ chosen randomly. 
The second layer weights are generated randomly from $a_j\in\left\{\pm \frac{1}{m}\right\}$ and kept fixed during training.

\textbf{Phase retrieval}
Phase retrieval is a classic model in the field of machine learning and signal processing \cite{drenth1994principles,miao1999extending,chen2023generalized}.
In this setting, we are aimed to solve the following problem, i.e.,
\begin{align}
  \label{phase retrieval}
  \min\limits_{\vx\in\mathbb{R}^d} f(\vx):=\frac{1}{2m}\sumlimits{i=1}{m}\left(y_i-\left|\va_r^\top\vx\right|^2\right)^2.
\end{align}
Here, $y_i$ represents the intensity measurements, i.e., $y_i=\left|\va_i^\top \vz\right|, \forall i\in[m]$ with $\va_i\in\mathbb{R}^d$ being the fixed parameters and $\vz\in\mathbb{R}^d$ being the true objects. 

The data in our experiment are generated by $y_i=\left|\va_i^{\top}\vz\right|^2+\bm{\epsilon}_i, i\in[m]$, where each coordinate of both the measurement vector $\va_i\in\mathbb{R}^d$ and the true parameter $\vz$ satisfy Gaussian distribution $\mathcal{N}\left(0,0.5\right)$, and $\bm{\epsilon}_i\sim \mathcal{N}\left(0,25\right)$ is the noise.
Here, we set the number of samples $m=1000$ and the dimension $d=10$.

\begin{figure}[!h]
\begin{subfigure}{0.49\linewidth}
  \centering
  \includegraphics[width=0.8\linewidth]{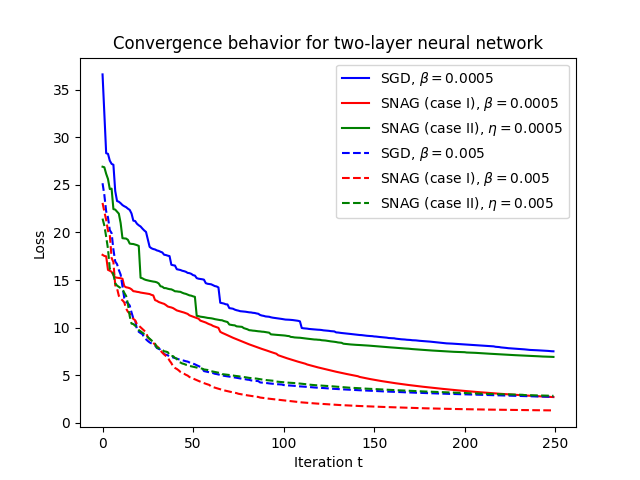}
  \caption{Two-layer neural network}
  \label{fig a}
\end{subfigure}
\begin{subfigure}{0.49\linewidth}
  \centering
  \includegraphics[width=0.8\linewidth]{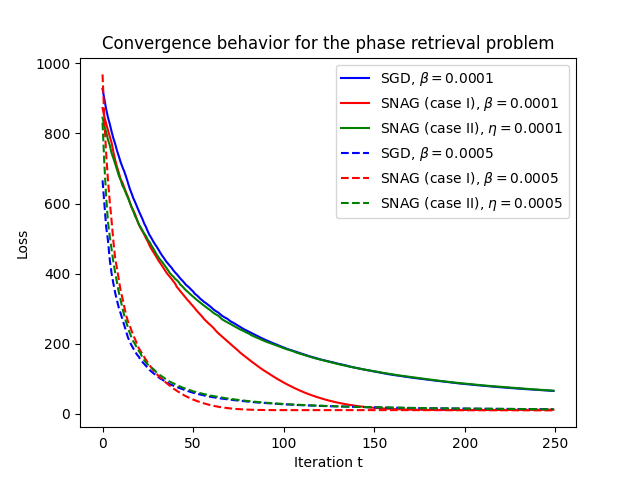}
  \caption{Phase retrieval model}
  \label{fig b}
\end{subfigure}
%\vspace{-1 in}
\caption{Experiment results. We run each algorithm 100 times and plot the average loss at each iteration.}
\label{figure 1}
\end{figure}

\textbf{Experiment Setup}
We set $\beta=\eta$ in Algorithm \ref{algorithm2} (named SNAG (case I) in the legend) and $\lambda_t=\eta$ in Algorithm \ref{algorithm3} (named SNAG (case II) in the legend) where $\eta$ is also the step size of SGD.
The stochastic gradient in each step is computed by samples randomly chosen with batch size 10.
We start the training process with the initial vector satisfying $\mathcal{N}\left(1,25\right)$.

\textbf{Results}
As Figure \ref{figure 1} shows, SGD and SNAG with $\lambda_t=\eta$ exhibit comparable performance under these two possibly non-convex setting, complementing their theoretical analysis.
SNAG (case I) performs best among the three especially with small step size though we only prove its acceleration theoretically in the convex case.

\section{Conclusion} 
In this paper, motivated by several machine learning problems, we propose a new general smoothness, which generalizes the global smoothness and $\left(L_0,L_1\right)$-smoothness.
Under this condition, we analyze NAG method and obtain the accelerated convergence rate of order $\mathcal{O}\left(1/T^2\right)$ for convex optimizations with access to accurate gradients.
For stochastic optimization, we obtain accelerated probabilistic convergence rates of order $\tilde{\mathcal{O}}\left(1/T^2+\sqrt{\left(A+B+C\right)/T}\right)$ under sub-Gaussian relaxed affine-variance noises.
Furthermore, we apply our analysis to smooth optimizations and obtain the result of order $\mathcal{O}\left(\left(B+1\right)/T^2+\sqrt{\left(A+C\right)/T}\right)$ in expectation under expected relaxed affine-variance noises.
All the above derived convergence rates are optimal without further assumptions.

\section*{Declarations}
\subsection*{Funding}
This work was supported in part by the National Natural Science Foundation of China under grant number 12471096, and the National Key Research and Development Program of China under grant number 2021YFA1003500.
\iffalse
\subsection*{Conflict of interest}
The authors have no relevant financial or non-financial interests to disclose.
\fi

\appendix
\section{Examples satisfying the $\left(L_0,L_1,L_2\right)$-smoothness condition}
\label{section B}
\subsection{Two-layer neural networks}  
\label{section two-layer}
Recall the two-layer neural network model in (\ref{two layer}) and we have the following lemma from \cite{taheri2023fast}. 
We refer interested readers to see the proof in their paper.
  \begin{lemma}[Lemma 5 in \cite{taheri2023fast}]
  \label{lemma two layer}
  Let $F$ be in (\ref{two layer}) and $\Phi$ be a two layer neural network with the activation function satisfying that there exist $L,\alpha,l>0$, such that
  \begin{align*}
    \left| \sigma''(t)\right|\leq L, \quad \alpha\leq \sigma'(t)\leq l,\quad \forall t\in\mathbb{R}.
  \end{align*}
  Then, $F$ is self-bounded of gradient and Hessian with constants $h=\frac{lR}{\sqrt{m}}, H=\frac{LR^2}{m^2}+\frac{l^2R^2}{m}$, i.e.,
  \begin{align*}
    \norm{\nabla F(\vx)}\leq h F(\vx),\quad \norm{\nabla^2 F(\vx)}\leq H F(\vx),
  \end{align*}
  where $R=\max\limits_{i\in[n]}\norm{\vw_i}$.
\end{lemma}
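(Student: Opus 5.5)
We need to prove Lemma (Lemma 5 of taheri): $F$ is self-bounded, with $\norm{\nabla F(\vx)}\leq hF(\vx)$ and $\norm{\nabla^2 F(\vx)}\leq HF(\vx)$.

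The plan is a direct chain-rule computation that exploits $f'=-f$ and $f''=f$ for the exponential loss, so that every derivative of $f(y_i\Phi(\vx,\vw_i))$ is the loss value times a bounded factor. Throughout, write $u_i(\vx)=y_i\Phi(\vx,\vw_i)$ and $F=\frac{1}{n}\sum_i f(u_i)$, with $f(u_i)=\exp(-u_i)>0$.

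\textbf{Gradient bound.} The gradient is $\nabla F(\vx)=-\frac1n\sum_i f(u_i)\,y_i\nabla_\vx\Phi(\vx,\vw_i)$. For each $i$, the $j$-th block of $\nabla_\vx\Phi(\vx,\vw_i)$ is $a_j\sigma'(\la\vx_j,\vw_i\ra)\vw_i$. We have $|a_j|=1/m$ (the setting of the paper) and $|\sigma'|\le l$. Hence
\[
\norm{\nabla_\vx\Phi}^2\le \sum_{j=1}^m \frac{l^2R^2}{m^2}=\frac{l^2R^2}{m},
\]
that is, $\norm{\nabla_\vx\Phi}\le lR/\sqrt m=h$. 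The triangle inequality together with $|y_i|=1$ and $f\ge0$ then gives $\norm{\nabla F}\le \frac1n\sum_i f(u_i)h=hF$.

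\textbf{Hessian bound.} The Hessian is
\[
\nabla^2F=\frac1n\sum_i f(u_i)\left(\nabla_\vx\Phi\,\nabla_\vx\Phi^\top - y_i\nabla^2_\vx\Phi\right).
\]
The rank-one term has operator norm $\norm{\nabla_\vx\Phi}^2\le l^2R^2/m$. The Hessian $\nabla_\vx^2\Phi$ is block diagonal, with $j$-th block $a_j\sigma''(\la\vx_j,\vw_i\ra)\vw_i\vw_i^\top$. Its operator norm is therefore the maximum over blocks, which is at most $LR^2/m$. The paper's constant $LR^2/m^2$ presumably reflects a different normalization of $a_j$ in the source; since I am citing \cite{taheri2023fast}, I would check that step against their scaling, and otherwise state the bound with whichever power of $m$ their normalization yields. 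Summing with weights $f(u_i)\ge0$ gives $\norm{\nabla^2F}\le HF$.

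\textbf{Main obstacle.} The only delicate point is this constant bookkeeping, in particular the block-diagonal operator norm versus a Frobenius bound. The lower bound $\alpha$ on $\sigma'$ is not needed for these two inequalities. I also note that the leaky ReLU used in the experiments is not $C^2$; the lemma applies to its smoothed version, consistent with the text calling it ``smoothed-leaky-ReLU''.
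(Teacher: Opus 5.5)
The paper gives no proof of this lemma; it defers to \cite{taheri2023fast}. Your direct chain-rule computation is the natural argument, and it is correct. Using $f'=-f$ and $f''=f$ you get $\nabla^2F=\frac1n\sum_i f(u_i)\bigl(\nabla\Phi_i\nabla\Phi_i^\top-y_i\nabla^2\Phi_i\bigr)$. With $|a_j|=1/m$ this yields $h=lR/\sqrt m$ and $\norm{\nabla^2F(\vx)}\le \frac{(L+l^2)R^2}{m}F(\vx)$.

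The weak point is your hedge on the Hessian constant. You attribute the mismatch to ``a different normalization'' and suggest checking it against the source. In fact, under $a_j\in\{\pm 1/m\}$ the stated constant $H=\frac{LR^2}{m^2}+\frac{l^2R^2}{m}$ is false. This is the normalization the paper uses, and the one that makes $h=lR/\sqrt m$ come out of your gradient computation. The following example shows it:

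Take $n=1$, $y_1=1$, $\norm{\vw_1}=R$. Choose $\vx_1$ so that $|\sigma''(\la\vx_1,\vw_1\ra)|$ equals (or is arbitrarily close to) $L=\sup|\sigma''|$. Choose $a_1=\pm1/m$ with sign opposite to $\sigma''(\la\vx_1,\vw_1\ra)$. Let $\bm{v}$ be the unit vector equal to $\vw_1/R$ in block $1$ and zero elsewhere. Then
\begin{align*}
\bm{v}^\top\nabla^2F(\vx)\bm{v}=F(\vx)\left(\frac{\sigma'(\la\vx_1,\vw_1\ra)^2R^2}{m^2}+\frac{|\sigma''(\la\vx_1,\vw_1\ra)|R^2}{m}\right)\approx F(\vx)\left(\frac{\sigma'(\la\vx_1,\vw_1\ra)^2R^2}{m^2}+\frac{LR^2}{m}\right),
\end{align*}
which exceeds $HF(\vx)$ as soon as $L(1-1/m)>l^2$. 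This already happens for a smoothed leaky ReLU with a moderately sharp transition, $l=1$, $m=30$.

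So your $LR^2/m$ is the right order, and the $m^{2}$ is an error in the statement or its transcription; it is not something a careful bookkeeping step could recover. You should state the lemma with $H=(L+l^2)R^2/m$. You should also make $|a_j|=1/m$ an explicit hypothesis: the lemma as written places no constraint on $a_j$, and neither bound holds for arbitrary $a_j$.

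The correction costs nothing downstream. The paper uses $H$ only through $L_2=\max\{h,H\mathrm{e}\}$ in the next lemma, and that argument is valid for any correct $H$.
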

In the next lemma, we denote $F^*$ is the minimum of $F(\vx)$ in (\ref{two layer}), i.e., $F(\vx)\geq F^*, \forall \vx\in \mathbb{R}^{\tilde{d}}$.
\begin{lemma}
  Under the condition of Lemma \ref{lemma two layer}, $F\left(\vx\right)$ in (\ref{two layer}) is $\left(L_0,0,L_2\right)$-smooth,
  where $L_0$ and $L_2$ are non-negative constants such that $L_2=\max\left\{h, H\rm{e}\right\}$, $L_0=L_2F^*$.
\end{lemma}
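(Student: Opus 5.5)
We need to show that for $\norm{\vx-\vy}\leq 1/L_2$ (with $L_1=0$, so the other constraint is vacuous),
\begin{align*}
\norm{\nabla F(\vx)-\nabla F(\vy)}\leq \left(L_0+L_2\left(F(\vx)-F^*\right)\right)\norm{\vx-\vy}=L_2F(\vx)\norm{\vx-\vy},
\end{align*}
using $L_0=L_2F^*$. The plan is to integrate the Hessian along the segment and use the Hessian self-bound $\norm{\nabla^2F}\leq HF$ from Lemma \ref{lemma two layer}. This reduces everything to controlling how much $F$ can grow along the segment, which is where the gradient self-bound $\norm{\nabla F}\leq hF$ enters.

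\textbf{Step 1 (growth of $F$ along the segment).} Set $\vz(s)=\vx+s(\vy-\vx)$ for $s\in[0,1]$ and $\phi(s)=F(\vz(s))>0$. By Cauchy--Schwarz and the gradient bound,
\begin{align*}
|\phi'(s)|\leq \norm{\nabla F(\vz(s))}\norm{\vy-\vx}\leq h\,\phi(s)\norm{\vy-\vx}.
\end{align*}
By Grönwall's inequality, $\phi(s)\leq F(\vx)\exp\left(hs\norm{\vy-\vx}\right)$. Since $L_2\geq h$ and $\norm{\vx-\vy}\leq 1/L_2$, the exponent is at most $1$. Hence $F(\vz(s))\leq \mathrm{e}F(\vx)$ on the whole segment.

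\textbf{Step 2 (Hessian integration).} Write $\nabla F(\vy)-\nabla F(\vx)=\int_0^1\nabla^2F(\vz(s))(\vy-\vx)\,ds$. This gives
\begin{align*}
\norm{\nabla F(\vy)-\nabla F(\vx)}\leq \int_0^1 HF(\vz(s))\,ds\,\norm{\vy-\vx}\leq H\mathrm{e}F(\vx)\norm{\vy-\vx}\leq L_2F(\vx)\norm{\vy-\vx},
\end{align*}
where the last step uses $L_2\geq H\mathrm{e}$. Writing $F(\vx)=(F(\vx)-F^*)+F^*$ gives exactly the $(L_0,0,L_2)$ form with $L_0=L_2F^*$, so Definition \ref{definition 1} holds with $q=1$.

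\textbf{Main obstacle.} The delicate point is twice differentiability. The stated hypotheses $|\sigma''|\leq L$ and $\alpha\leq\sigma'\leq l$ make $F$ twice differentiable, so the integral representation in Step 2 is legitimate. For the leaky-ReLU used in the experiments this fails at the kink, so I would confine the argument to the lemma's smooth activations. The remaining care is in the Grönwall step: I would apply it to $\log\phi$, which is valid because the exponential loss keeps $F>0$.
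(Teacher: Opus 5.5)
Your proposal is correct and follows essentially the same route as the paper. Like the paper, you apply Gr\"onwall along the segment using the gradient self-bound $\norm{\nabla F}\leq hF$, integrate the Hessian self-bound, absorb the resulting exponential factor via $L_2\geq\max\{h,H\mathrm{e}\}$, and split $F(\vx)=F^*+(F(\vx)-F^*)$; the only cosmetic difference is that the paper keeps $\exp(h/L_2)$ and then uses $L_2\geq H\exp(h/L_2)$, whereas you bound the exponential by $\mathrm{e}$ directly.
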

\begin{proof}
    For any $\norm{\vx-\vy}\leq 1/L_2$, define $\gamma(t)=t(\vy-\vx)+\vx, t\in[0,1]$.
    Then, for any $\mu\in[0,1]$ we have,
    \begin{align}
        F(\gamma(\mu))=&\int_{0}^{\mu}\la \nabla F(\gamma(t)), \vy-\vx\ra d t+F(\vx)\notag\\
        \leq &\int_{0}^{\mu} \norm{\nabla F(\gamma(t))}\cdot \norm{\vy-\vx} d t+F(\vx)\notag\\
        \leq &h \norm{\vy-\vx}\int_{0}^{\mu} F(\gamma(t))d t+F(\vx),
    \end{align}
    where the first inequality holds since Cauchy-Schwarz inequality and the second inequality follows from Lemma \ref{lemma two layer}.
    By Gronwall's inequality, we have
    \begin{align}
        F(\gamma(\mu))\leq & F(\vx)\cdot \exp(\mu h\norm{\vy-\vx}), \quad \mu\in[0,1].
    \end{align}
    Moreover, we have 
    \begin{align}
        \nabla F(\vy)-\nabla F(\vx)=&\nabla F(\gamma(1))-\nabla F(\gamma(0))=\int_{0}^{1} \nabla^2 F(\gamma(t))(\vy-\vx) dt,
    \end{align}
    which implies, 
    \begin{align}
        \norm{\nabla F(\vy)-\nabla F(\vx)}=&\norm{\int_{0}^{1} \nabla^2 F(\gamma(t))(\vy-\vx) dt}\notag\\
         \leq & \int_{0}^{1}\norm{\nabla^2 F(\gamma(t))}\norm{\vy-\vx}dt\notag\\
         \leq & H \norm{\vy-\vx}\int_{0}^{1}F(\gamma(t)) dt\notag\\
         \leq & H \norm{\vy-\vx}\int_{0}^{1} F(\vx)\cdot \exp(th\norm{\vy-\vx})d t.
    \end{align}
    Since $\norm{\vy-\vx}\leq \frac{1}{\twolayerL}$, we have 
    \begin{align}
         \norm{\nabla F(\vy)-\nabla F(\vx)}\leq & H F(\vx) \exp(\frac{h}{L_2})\norm{\vy-\vx}\notag\\
         = & \left(H\exp(\frac{h}{L_2})F^*+  H \exp(\frac{h}{L_2})\left(F(\vx)-F^*\right)\right)\norm{\vy-\vx}.
    \end{align}
    By the constraints that $L_2=\max\left\{h, H\text{e}\right\}$, we have 
    \begin{align*}
      L_2\geq H \exp(\frac{h}{L_2}).
    \end{align*}
    Combining with the fact that $F^*$ is positive for the exponential loss, we have
    \begin{align}
        \norm{\nabla F(\vy)-\nabla F(\vx)}\leq \left(L_0+L_2\left(F(\vx)-F^*\right)\right)\norm{\vy-\vx}.
    \end{align}
\end{proof}
\subsection{Phase retrieval model}
We then provide the proof that the phase retrieval model in (\ref{phase retrieval}) satisfying $\left(L_0,L_1,L_2\right)$-smoothness condition.
The following lemma is presented in \cite{chen2023generalized}.
\begin{lemma}
  The function $f(\vx)$ in (\ref{phase retrieval}) belongs to $\mathcal{L}^*_{\text{sym}}\left(\frac{2}{3}\right)$, i.e., for any $\vx, \vy\in\mathbb{R}^d$,
\begin{align}
  \label{phase retrial inequality}
  \norm{\nabla f(\vx)-\nabla f(\vy)}\leq \left(L_0'+L_1'\norm{\nabla f(\vx)}^{\frac{2}{3}}+L_2'\norm{\vx-\vy}^2\right)\norm{\vx-\vy},
\end{align}
where $L_0',L_1',L_2'$ are non-negative constants.
\end{lemma}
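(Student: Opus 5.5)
We will argue directly from the explicit polynomial structure of $f$ in (\ref{phase retrieval}), via a Hessian bound integrated along a segment. Write $\va_i$ for the measurement vectors, $Y=\max_i|y_i|$, and $K=\max_i\norm{\va_i}^2$. A direct computation gives
\begin{align*}
\nabla f(\vx)=\frac{2}{m}\sumlimits{i=1}{m}\left((\va_i^\top\vx)^2-y_i\right)(\va_i^\top\vx)\,\va_i,\qquad
\nabla^2 f(\vx)=\frac{1}{m}\sumlimits{i=1}{m}\left(6(\va_i^\top\vx)^2-2y_i\right)\va_i\va_i^\top .
\end{align*}
Let $P$ be the orthogonal projection onto $V=\mathrm{span}\{\va_i\}$. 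Everything depends on $\vx$ only through $P\vx$. Hence
\begin{align*}
\norm{\nabla^2 f(\vx)}\leq c_0+c_1\norm{P\vx}^2,\qquad c_0=2YK,\quad c_1=6K^2 .
\end{align*}

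\textbf{Step 1: integrate the Hessian along the segment.} Set $\gamma(t)=\vx+t(\vy-\vx)$ and use $\nabla f(\vy)-\nabla f(\vx)=\int_0^1\nabla^2 f(\gamma(t))(\vy-\vx)\,dt$. Since $\norm{P\gamma(t)}^2\leq 2\norm{P\vx}^2+2\norm{\vy-\vx}^2$, this yields
\begin{align*}
\norm{\nabla f(\vx)-\nabla f(\vy)}\leq\left(c_0+2c_1\norm{P\vx}^2+2c_1\norm{\vx-\vy}^2\right)\norm{\vx-\vy}.
\end{align*}
This already produces the $L_2'\norm{\vx-\vy}^2$ term. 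It remains to convert $\norm{P\vx}^2$ into $L_0'+L_1'\norm{\nabla f(\vx)}^{2/3}$.

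\textbf{Step 2 (main step): the gradient grows cubically in $\norm{P\vx}$.} Define
\begin{align*}
\mu=\min_{\vu\in V,\ \norm{\vu}=1}\frac{1}{m}\sumlimits{i=1}{m}(\va_i^\top\vu)^4 .
\end{align*}
Then $\mu>0$, because $\sum_i(\va_i^\top\vu)^4=0$ forces $\vu\perp V$, and the unit sphere of $V$ is compact. Testing the gradient against $P\vx$ gives
\begin{align*}
\la\nabla f(\vx),P\vx\ra=\frac{2}{m}\sumlimits{i=1}{m}(\va_i^\top\vx)^4-\frac{2}{m}\sumlimits{i=1}{m}y_i(\va_i^\top\vx)^2\geq 2\mu\norm{P\vx}^4-2YK\norm{P\vx}^2 .
\end{align*}
By Cauchy--Schwarz, $\norm{\nabla f(\vx)}\geq 2\mu\norm{P\vx}^3-2YK\norm{P\vx}$. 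We then split into two cases.
\begin{itemize}
\item[(a)] If $\norm{P\vx}^2\geq 2YK/\mu$, then $\norm{\nabla f(\vx)}\geq\mu\norm{P\vx}^3$, so $\norm{P\vx}^2\leq\mu^{-2/3}\norm{\nabla f(\vx)}^{2/3}$.
\item[(b)] Otherwise, $\norm{P\vx}^2\leq 2YK/\mu$.
\end{itemize}
Combining the cases,
\begin{align*}
\norm{P\vx}^2\leq \frac{2YK}{\mu}+\mu^{-2/3}\norm{\nabla f(\vx)}^{2/3}.
\end{align*}

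\textbf{Step 3: assemble the constants.} Substituting Step 2 into Step 1 gives (\ref{phase retrial inequality}) with
\begin{align*}
L_0'=c_0+\frac{4c_1YK}{\mu},\qquad L_1'=2c_1\mu^{-2/3},\qquad L_2'=2c_1 .
\end{align*}

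The only delicate point is the coercivity in Step 2. It relies on $\mu>0$, which is why we restrict to $V$ rather than assume the $\va_i$ span $\mathbb{R}^d$. Under the Gaussian data used in the experiments, with $m\geq d$, the $\va_i$ span $\mathbb{R}^d$ almost surely, so $P$ is then the identity.
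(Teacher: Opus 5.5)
Your proof is correct, and it takes a genuinely different route from the paper. The paper gives no argument of its own: it imports the lemma from \cite{chen2023generalized}. In that work the class $\mathcal{L}^*_{\text{sym}}(\alpha)$ is defined symmetrically, using $\max_{\theta\in[0,1]}\norm{\nabla f(\vx_\theta)}^{\alpha}$ over the segment $\vx_\theta=\vx+\theta(\vy-\vx)$. The displayed inequality is, as the paper's ``i.e.'' indicates, the asymmetric reformulation of that class for $\alpha\in(0,1)$, with extra term $\norm{\vx-\vy}^{\alpha/(1-\alpha)}=\norm{\vx-\vy}^2$ at $\alpha=2/3$.

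You bypass that machinery entirely. Integrating the quadratic Hessian growth $\norm{\nabla^2 f(\vx)}\leq c_0+c_1\norm{P\vx}^2$ along the segment produces the $\norm{\vx-\vy}^2$ term directly. The quartic coercivity constant $\mu>0$ on $\mathrm{span}\{\va_i\}$ then turns $\norm{P\vx}^2$ into $\norm{\nabla f(\vx)}^{2/3}$. Your route gives a short, self-contained proof with explicit constants in terms of $Y$, $K$, $\mu$, and it needs no spanning assumption on the $\va_i$. The cited route gives generality, since the same equivalence applies to every member of the class.

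Combining your Steps 1 and 2 before integrating gives the pointwise bound $\norm{\nabla^2 f(\vx)}\leq c_0+2c_1YK/\mu+c_1\mu^{-2/3}\norm{\nabla f(\vx)}^{2/3}$. Integrating this along the segment yields the symmetric form as well. So your argument covers the literal ``belongs to $\mathcal{L}^*_{\text{sym}}(2/3)$'' claim, not only the displayed inequality. You should set aside the trivial case where all $\va_i=0$: there $\mu$ is undefined, but $f$ is constant.
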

Thus, we could derive Lemma \ref{lemma B.4}.
\begin{lemma}
  \label{lemma B.4}
  Suppose that $f(\vx)$ is the phase retrieval model defined in (\ref{phase retrial inequality}). Then, $f(\vx)$ is $(L_0,L_1,0)$-smooth, where $L_0=L_0'+L_2'/L_1^2$ and $L_1=L_1'$.
\end{lemma}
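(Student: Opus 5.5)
We need to show that for $\norm{\vx-\vy}\leq 1/L_1$,
\begin{align*}
\norm{\nabla f(\vx)-\nabla f(\vy)}\leq \left(L_0'+L_2'/L_1^2+L_1\norm{\nabla f(\vx)}^{\frac{2}{3}}\right)\norm{\vx-\vy},
\end{align*}
with $L_1=L_1'$ and $p=2/3\in[0,2)$. Since $L_2=0$, the constraint in Definition \ref{definition 1} reads $\norm{\vx-\vy}\leq\min\{1/L_1,1/0\}=1/L_1$, using the convention $1/0=+\infty$.

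The plan is to start from (\ref{phase retrial inequality}), which holds for all $\vx,\vy$. The only term that is not of the required form is the cubic contribution $L_2'\norm{\vx-\vy}^2\cdot\norm{\vx-\vy}$. On the admissible region $\norm{\vx-\vy}\leq 1/L_1$ we have $\norm{\vx-\vy}^2\leq 1/L_1^2$, so this term is at most $(L_2'/L_1^2)\norm{\vx-\vy}$.

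Adding this to $L_0'$ gives $L_0=L_0'+L_2'/L_1^2$. Together with $L_1=L_1'$ and the exponent $p=2/3$, this yields exactly the $(L_0,L_1,0)$-smoothness inequality, and the $(f(\vx)-f^*)^q$ term is simply absent.

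The only delicate point is the degenerate case $L_1'=0$. There $1/L_1=+\infty$, the admissible region is all of $\mathbb{R}^d$, and $L_2'/L_1^2$ is infinite, so the bound is vacuous but still formally true. If a finite constant is wanted, I would note that for the phase retrieval objective $L_1'>0$ (the $\norm{\nabla f}^{2/3}$ growth is genuinely needed), or else restrict to $L_1'>0$; the resulting constant is then finite. Nothing else is an obstacle: the proof is a one-line bound of $\norm{\vx-\vy}^2$ on the restricted neighborhood.
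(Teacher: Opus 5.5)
Your proof is correct and is the same as the paper's. Both restrict (\ref{phase retrial inequality}) to $\norm{\vx-\vy}\leq 1/L_1$ and bound $L_2'\norm{\vx-\vy}^2$ by $L_2'/L_1^2$, which is then absorbed into $L_0$; your remark on the degenerate case $L_1'=0$, where $L_0$ would be infinite, is a fair caveat that the paper leaves implicit by tacitly assuming $L_1'>0$.
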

\begin{proof}
  By (\ref{phase retrial inequality}), for any $\vx, \vy\in \mathbb{R}^d$ such that $\norm{\vx-\vy}\leq\frac{1}{L_1}$, we have 
  \begin{align*}
    \norm{\nabla f(\vx)-\nabla f(\vy)}\leq & \left(L_0'+L_1'\norm{\nabla f(\vx)}^{\frac{2}{3}}+L_2'/L_1^2\right)\norm{\vx-\vy}\notag\\
    = & \left(L_0+L_1\norm{\nabla f(\vx)}^{\frac{2}{3}}\right) \norm{\vx-\vy}.
  \end{align*}
\end{proof}

\section{Complementary lemmas}
The following lemma characterizes the relationship between the gradient and the function value gap under the smoothness condition.
Refer to \cite{attia2023sgd} for a proof.
\begin{lemma}
  \label{Lemma A.1}
  Let $f(\cdot): \mathbb{R}^d \rightarrow \mathbb{R}$ be an $L$-smooth function with minimum $f^*$. Then, we have 
  \begin{align*}
    \norm{\nabla f(\vx_t)}^2\leq 2L\left(f\left(\vx_t\right)-f^*\right).
  \end{align*}
\end{lemma}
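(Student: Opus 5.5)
The claim is Lemma~\ref{Lemma A.1}: if $f$ is $L$-smooth and bounded below by $f^*$, then $\norm{\nabla f(\vx_t)}^2\leq 2L\left(f(\vx_t)-f^*\right)$. The plan is to combine the standard descent lemma with one gradient step from $\vx_t$ and then compare against the global infimum $f^*$. Convexity is not needed, only Lipschitz continuity of the gradient and the lower bound from Assumption~\ref{assumption 1}.

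\emph{Step 1 (descent lemma).} For any $\vx,\vy\in\R^d$, $L$-smoothness gives
\begin{align*}
f(\vy)\leq f(\vx)+\la \nabla f(\vx),\vy-\vx\ra+\frac{L}{2}\norm{\vy-\vx}^2 .
\end{align*}
To prove it, write $f(\vy)-f(\vx)=\int_0^1\la \nabla f(\vx+s(\vy-\vx)),\vy-\vx\ra\,ds$. Subtract $\la\nabla f(\vx),\vy-\vx\ra$ and bound the remaining integrand by Cauchy--Schwarz and the Lipschitz property, which gives $\int_0^1 Ls\norm{\vy-\vx}^2ds=\frac{L}{2}\norm{\vy-\vx}^2$.

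\emph{Step 2 (gradient step).} Take $\vx=\vx_t$ and $\vy=\vx_t-\frac{1}{L}\nabla f(\vx_t)$. Step 1 then gives
\begin{align*}
f(\vy)\leq f(\vx_t)-\frac{1}{L}\norm{\nabla f(\vx_t)}^2+\frac{1}{2L}\norm{\nabla f(\vx_t)}^2=f(\vx_t)-\frac{1}{2L}\norm{\nabla f(\vx_t)}^2 .
\end{align*}
Since $f^*=\inf f\leq f(\vy)$, rearranging yields $\norm{\nabla f(\vx_t)}^2\leq 2L\left(f(\vx_t)-f^*\right)$.

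There is no real obstacle. The only points to handle are the degenerate case $L=0$ and the fact that $f^*$ must be a global infimum. If $L=0$, then $\nabla f$ is constant, and boundedness below forces that constant to be $0$, so the inequality holds trivially. The global infimum is what lets us compare with the point $\vy$, which may lie anywhere in $\R^d$.
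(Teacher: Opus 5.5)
Your proof is correct and follows the standard route. The paper gives no proof of its own here, only a pointer to \cite{attia2023sgd}, and the same argument (descent lemma, one gradient step from $\vx_t$, comparison with $f^*$) is exactly what the paper carries out under generalized smoothness in Lemma~\ref{lemma A.3}, with your separate treatment of $L=0$ being a harmless extra.
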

Lemma \ref{lemma A.2} and Lemma \ref{lemma A.3} are the key to the analysis for $\left(L_0,L_1,L_2\right)$-smooth functions.
\begin{lemma}
    \label{lemma A.2}
    If $f(\cdot): \mathbb{R}^d \rightarrow \mathbb{R}$ is $\left(L_0,L_1,L_2\right)$-smooth with minimum $f^*$, then for any $\vx,\vy\in\mathbb{R}^d$ such that $\norm{\vx-\vy}\leq\min\left\{1/L_1,1/L_2\right\}$, we have
    \begin{align*}
        f(\vy)\leq f(\vx)+\la \nabla f(\vx),\vy-\vx\ra +\frac{L_0+L_1\norm{\nabla f(\vx)}^p+L_2\left(f(\vx)-f(\vx^*)\right)^q}{2}\norm{\vx-\vy}^2.
    \end{align*}
\end{lemma}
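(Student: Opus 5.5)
We prove the descent inequality of Lemma \ref{lemma A.2} by integrating the gradient along the segment joining $\vx$ and $\vy$, in the same way one proves the descent lemma for $L$-smooth functions.

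Set $\gamma(t)=\vx+t(\vy-\vx)$ for $t\in[0,1]$. For every $t$ we have $\norm{\gamma(t)-\vx}=t\norm{\vy-\vx}\leq \norm{\vy-\vx}\leq\min\{1/L_1,1/L_2\}$, so Definition \ref{definition 1} applies to the pair $(\vx,\gamma(t))$. The order of the pair matters here: the constant in Definition \ref{definition 1} is evaluated at its first argument, and we take that argument to be the fixed point $\vx$. Hence
\begin{align*}
\norm{\nabla f(\gamma(t))-\nabla f(\vx)}\leq \left(L_0+L_1\norm{\nabla f(\vx)}^p+L_2\left(f(\vx)-f^*\right)^q\right)t\norm{\vy-\vx}.
\end{align*}
Since $f$ is differentiable and $\nabla f$ is continuous (it is locally Lipschitz by the definition), the fundamental theorem of calculus along $\gamma$ gives
\begin{align*}
f(\vy)-f(\vx)-\la\nabla f(\vx),\vy-\vx\ra=\int_0^1\la \nabla f(\gamma(t))-\nabla f(\vx),\vy-\vx\ra\,dt .
\end{align*}
We bound the integrand by Cauchy--Schwarz and then by the previous display, and use $\int_0^1 t\,dt=1/2$. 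This yields exactly
\begin{align*}
f(\vy)\leq f(\vx)+\la \nabla f(\vx),\vy-\vx\ra+\frac{L_0+L_1\norm{\nabla f(\vx)}^p+L_2\left(f(\vx)-f^*\right)^q}{2}\norm{\vx-\vy}^2 .
\end{align*}
Finally, $f^*=f(\vx^*)$ by Assumption \ref{assumption 1}, which matches the form stated in the lemma.

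The only delicate point is making sure every intermediate point $\gamma(t)$ stays inside the admissible radius $\min\{1/L_1,1/L_2\}$ around $\vx$, while the smoothness constant is always evaluated at the fixed endpoint $\vx$ rather than at $\gamma(t)$. This is what lets the constant factor out of the integral unchanged, and it follows from the segment construction together with the convention $1/0=+\infty$. No Gronwall-type argument is needed, because the definition already anchors the constant at $\vx$.
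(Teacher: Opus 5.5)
Your proof is correct and is essentially the paper's argument. You write $f(\vy)-f(\vx)-\la\nabla f(\vx),\vy-\vx\ra$ as an integral along the segment, apply Cauchy--Schwarz, invoke Definition~\ref{definition 1} with the constant anchored at the fixed endpoint $\vx$, and use $\int_0^1 t\,dt=1/2$. Your side remarks, that every point of the segment stays within the admissible radius and that continuity of $\nabla f$ justifies the fundamental theorem of calculus, just make explicit details the paper leaves implicit.
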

\begin{proof}
    \begin{align*}
        &f(\vy)-f(\vx)-\la \vy-\vx, \nabla f\left(\vx\right)\ra\notag\\
        =&\int_{0}^{1}\la \nabla f(\theta \vy+\left(1-\theta\right)\vx)- \nabla f(\vx),\vy-\vx\ra d\theta\notag\\
        \leq & \int_{0}^{1}\norm{\nabla f(\theta \vy+\left(1-\theta\right)\vx)-\nabla f(\vx)}\cdot \norm{\vx-\vy} d\theta\notag\\
        \leq & \int_{0}^{1}\theta \cdot \left(L_0+L_1\norm{\nabla f(\vx)}^p+L_2\left(f(\vx)-f^*\right)^q\right)\norm{\vx-\vy}^2 d \theta\notag\\
        = & \frac{L_0+L_1\norm{\nabla f(\vx)}^p+L_2\left(f(\vx)-f^*\right)^q}{2}\norm{\vx-\vy}^2,
    \end{align*}
    where the first inequality holds since Cauchy-Schwarz inequality and the second inequality follows from the definition of $\left(L_0,L_1,L_2\right)$-smoothness.
\end{proof}
\begin{lemma}
  \label{lemma A.3}
    If $f(\cdot): \mathbb{R}^d \rightarrow \mathbb{R}$ is $\left(L_0,L_1,L_2\right)$-smooth with minimum $f^*$, then we have
    \begin{align}
       \norm{\nabla f(\vx_t)}^2\leq 4L_0\triangle_t+\left(2-p\right)\left(\left(2p\right)^{\frac{p}{2}}2L_1\triangle_t\right)^{\frac{2}{2-p}}+4L_2\triangle_t^{q+1}+8\left(L_1+L_2\right)^2\triangle_t^2,
    \end{align}
    where $\triangle_t=f(\vx_t)-f^*$.
\end{lemma}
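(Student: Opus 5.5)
Since no global Lipschitz constant is available, I would apply Lemma \ref{lemma A.2} with a short, gradient-aligned step from $\vx_t$ and compare the resulting decrease with the lower bound $f^*$. Concretely, write $\triangle_t=f(\vx_t)-f^*$ and $G=\norm{\nabla f(\vx_t)}$, and set $\vy=\vx_t-\eta\nabla f(\vx_t)$ with $\eta>0$. To use Lemma \ref{lemma A.2} we need the displacement condition $\eta G\le \min\{1/L_1,1/L_2\}$. Under that condition, together with $f(\vy)\ge f^*$, we get
\begin{align*}
f^*\le f(\vx_t)-\eta G^2+\frac{L_0+L_1G^p+L_2\triangle_t^q}{2}\eta^2G^2 .
\end{align*}

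\textbf{Case 1: $L_0+L_1G^p+L_2\triangle_t^q\le 1/\eta$ holds for $\eta=1/(2(L_1+L_2)G)$ as well.} More cleanly, I would pick $\eta=\min\{1/(L_0+L_1G^p+L_2\triangle_t^q),\,1/((L_1+L_2)G)\}$, which guarantees the displacement condition. Then $\eta G^2/2\le \triangle_t$, i.e. $G^2\le 2\triangle_t/\eta$. This splits into two cases according to which term attains the minimum. If the first term is the minimum, then $G^2\le 2\triangle_t(L_0+L_1G^p+L_2\triangle_t^q)$. If the second is the minimum, then $G^2\le 2(L_1+L_2)G\triangle_t$, so $G\le 2(L_1+L_2)\triangle_t$ and $G^2\le 4(L_1+L_2)^2\triangle_t^2$. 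This is covered by the term $8(L_1+L_2)^2\triangle_t^2$.

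\textbf{Case 2: the first regime.} Here I bound each summand of $2\triangle_t(L_0+L_1G^p+L_2\triangle_t^q)$ using $G^2\le 3\max\{\cdot\}$, or more simply by absorbing. The key step is the $L_1$ term. The inequality $2L_1\triangle_tG^p\le \frac{1}{2}G^2+c$ follows from Young's inequality with exponents $2/p$ and $2/(2-p)$ (valid since $p<2$). This gives $c=\frac{2-p}{2}\left(\tfrac{p}{2}\cdot 2\right)^{p/(2-p)}(2L_1\triangle_t)^{2/(2-p)}$, up to constant rearrangement into the stated form $(2-p)\left((2p)^{p/2}2L_1\triangle_t\right)^{2/(2-p)}$. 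Absorbing $\frac12 G^2$ on the left and multiplying by 2 yields
\begin{align*}
G^2\le 4L_0\triangle_t+4L_2\triangle_t^{q+1}+2c .
\end{align*}
I would then check that $2c$ is at most $(2-p)\left((2p)^{p/2}2L_1\triangle_t\right)^{2/(2-p)}$ by tuning the Young weight. The case $p=0$ is handled by the convention $0^0=1$.

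Finally, the right-hand side of the claimed inequality is a sum of nonnegative terms that dominates both case bounds, so the claim follows.

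The main obstacle is matching the exact constant in the $L_1$ term. I would first choose the Young weighting $ab\le \frac{a^{r}}{r}+\frac{b^{s}}{s}$ with $a=G^p\epsilon$, $r=2/p$, and $\epsilon$ chosen so that the coefficient of $G^2$ is $1/2$. I would then verify the constant, and if it is loose, rescale $\epsilon$. Since the lemma's constant appears generous, I expect this to close. A minor subtlety is the degenerate case $G=0$ or $L_1=L_2=0$; there the convention $1/0=+\infty$ lets the same argument go through with $\eta=1/L_0$.
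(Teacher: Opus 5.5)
Your proposal is correct and is essentially the paper's argument. The paper takes the step $1/\bigl(L_0+L_1(G+G^p)+L_2(G+\triangle_t^q)\bigr)$, a sum where you use a $\min$ of reciprocals, and applies Lemma \ref{lemma A.2} together with $f\ge f^*$ to get $G^2\le 2L_0\triangle_t+2L_1G^p\triangle_t+2L_2\triangle_t^{q+1}+2(L_1+L_2)G\triangle_t$; it then uses two Young inequalities, each absorbing $\frac14 G^2$, instead of your case split. Your constant check also closes without retuning, since your $2c=(2-p)p^{p/(2-p)}(2L_1\triangle_t)^{2/(2-p)}$ is at most $(2-p)(2p)^{p/(2-p)}(2L_1\triangle_t)^{2/(2-p)}$.
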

\begin{proof}
    Let $\vx=\vx_t-\frac{1}{\Llow}\nabla \fx{t}$. 
    It is easy to verify $\norm{\vx-\vx_t}\leq \min\left\{1/L_1,1/L_2\right\}$.
    By Lemma \ref{lemma A.2}, we have
    \begin{align*}
        f\left(\vx\right)\leq &\fx{t}+\la \nabla \fx{t},\vx-\vx_t\ra+\frac{L_0+L_1\norm{\nabla \fx{t}}^p+L_2\triangle_t^q}{2}\norm{\vx-\vx_t}^2\notag\\
    = &\fx{t}-\frac{\norm{\nabla \fx{t}}^2}{\Llow}\notag\\
    &+\frac{\left(L_0+L_1\norm{\nabla \fx{t}}^p+L_2\triangle_t^q\right)\cdot\norm{\nabla \fx{t}}^2}{2\left(\Llow\right)^2}\notag\\
    \leq & \fx{t}-\frac{\norm{\nabla \fx{t}}^2}{\Llow}\notag\\
    &+\frac{\norm{\nabla \fx{t}}^2}{2\left(\Llow\right)},
    \end{align*}
    which implies
    \begin{align*}
        \frac{\norm{\nabla \fx{t}}^2}{2\left(\Llow\right)}\leq \fx{t}-f\left(\vx\right)\leq \triangle_t.
    \end{align*}
    Re-arranging the above inequality, we obtain that
    \begin{align*}
        \norm{\nabla \fx{t}}^2\leq 2L_0\triangle_t+2L_1\norm{\nabla \fx{t}}^p\triangle_t+2L_2\triangle_t^{q+1}+2\left(L_1+L_2\right)\norm{\nabla \fx{t}}\cdot\triangle_t.
    \end{align*}
    When $p>0$, applying Young's inequality, we have
    \begin{align}
      \label{28}
        \norm{\nabla \fx{t}}^2\leq &2L_0\triangle_t+\frac{1}{4}\norm{\nabla \fx{t}}^2+\left(1-\frac{p}{2}\right)\left(\left(2p\right)^{\frac{p}{2}}2L_1\triangle_t\right)^{\frac{2}{2-p}}\notag\\
        &+2L_2\triangle_t^{q+1}+\frac{1}{4}\norm{\nabla \fx{t}}^2+4\left(L_1+L_2\right)^2\triangle_t^2.
    \end{align}
    Note that (\ref{28}) still holds when $p=0$ since $0^0=1$.
    Hence,
    \begin{align*}
        \norm{\nabla \fx{t}}^2\leq 4L_0\triangle_t+\left(2-p\right)\left(\left(2p\right)^{\frac{p}{2}}2L_1\triangle_t\right)^{\frac{2}{2-p}}+4L_2\triangle_t^{q+1}+8\left(L_1+L_2\right)^2\triangle_t^2.
    \end{align*}
\end{proof}
\begin{corollary}
  \label{corollary 1}
   Let $f(\cdot)$ be an $\left(L_0,L_1,L_2\right)$-smooth function with minimum $f^*$. If $f(\vx_t)-f^*\leq G$, then we have 
   \begin{align*}
    \norm{\nabla f(\vx_t)}^2\leq g(G),
   \end{align*}
   where $g(\cdot): \mathbb{R}\rightarrow\mathbb{R}$ is defined as
   \begin{align}
    \label{g function}
  g(\mu)=4L_0\mu+\left(2-p\right)\left(\left(2p\right)^{\frac{p}{2}}2L_1\mu\right)^{\frac{2}{2-p}}+4L_2\mu^{q+1}+8\left(L_1+L_2\right)^2\mu^2.
\end{align}
\end{corollary}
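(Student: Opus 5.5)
Corollary \ref{corollary 1} follows directly from Lemma \ref{lemma A.3} once we show that the right-hand side there is nondecreasing in $\triangle_t$. Write $\triangle_t=f(\vx_t)-f^*$. By Lemma \ref{lemma A.3},
\begin{align*}
\norm{\nabla f(\vx_t)}^2\leq g(\triangle_t),
\end{align*}
where $g$ is exactly the function in (\ref{g function}) evaluated at $\mu=\triangle_t$. By Assumption \ref{assumption 1}, $\triangle_t\geq 0$, and by hypothesis $\triangle_t\leq G$. So it suffices to show $g(\triangle_t)\leq g(G)$, that is, that $g$ is nondecreasing on $[0,\infty)$.

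We check monotonicity term by term, using $L_0,L_1,L_2\geq 0$.
\begin{itemize}
\item The term $4L_0\mu$ is linear with a nonnegative coefficient, so it is nondecreasing.
\item The term $8(L_1+L_2)^2\mu^2$ is nondecreasing on $\mu\geq 0$.
\item The term $4L_2\mu^{q+1}$ has exponent $q+1\geq 1>0$, so it is nondecreasing on $\mu\geq 0$.
\item For the term $(2-p)\left((2p)^{p/2}2L_1\mu\right)^{2/(2-p)}$: since $p\in[0,2)$, we have $2-p>0$ and $2/(2-p)>0$. The base $(2p)^{p/2}2L_1\mu$ is a nonnegative multiple of $\mu$. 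When $p=0$ the convention $0^0=1$ makes this multiple $2L_1$. A positive power of a nonnegative nondecreasing quantity is nondecreasing.
\end{itemize}
Hence $g$ is nondecreasing on $[0,\infty)$, and $\norm{\nabla f(\vx_t)}^2\leq g(\triangle_t)\leq g(G)$.

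There is no real obstacle here. The only points needing care are confirming $\triangle_t\geq 0$, so that the powers are well defined and monotone, and checking that the $p=0$ case is covered by the convention $0^0=1$, which Lemma \ref{lemma A.3} already relies on.
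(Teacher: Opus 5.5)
Your proof is correct and is essentially the paper's argument. The paper states Corollary~\ref{corollary 1} right after Lemma~\ref{lemma A.3} without a separate proof, and the one step it leaves implicit is exactly what you verify term by term: $g$ is nondecreasing on $[0,\infty)$, including the $p=0$ case via the convention $0^0=1$.
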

The following lemma plays crucial role in our probabilistic analysis. Refer to \cite{li2020high} for a proof.
\begin{lemma}[Lemma 1 in \cite{li2020high}]
  \label{lemma high probability}
  Assume that $\{Z_t\}_{t\in[T]}$ is a martingale difference sequence with respect to $\mathbf{\gamma}_1, \mathbf{\gamma}_2, \cdots, \mathbf{\gamma}_T$ and $\mathbb{E}_t[\exp\left(Z_t^2/\sigma_t^2\right)]\leq \exp(1)$ for all $1 \leq t \leq T$, where $\sigma_t$ is a sequence of measurable random variables with respect to $\mathbf{\gamma}_1, \mathbf{\gamma}_2, \cdots, \mathbf{\gamma}_{t-1}$. Then, for any fixed $\lambda >0$ and $\delta\in(0,1)$, with probability at least $1-\delta$, we have
  $$\sumlimits{t=1}{T}Z_t\leq \frac{3\lambda}{4}\sumlimits{t=1}{T}\sigma_t^2+\frac{1}{\lambda}\log\frac{1}{\delta}.$$
\end{lemma}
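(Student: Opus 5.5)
The plan is the standard Chernoff/supermartingale argument. The main work is a one-step conditional moment generating function bound,
\begin{align*}
\mathbb{E}_t\left[\exp\left(\lambda Z_t\right)\right]\leq \exp\left(\frac{3\lambda^2\sigma_t^2}{4}\right), \quad \forall \lambda>0,
\end{align*}
which uses only $\mathbb{E}_t[Z_t]=0$ and $\mathbb{E}_t[\exp(Z_t^2/\sigma_t^2)]\leq \exp(1)$. Once this holds, the process
\begin{align*}
S_k=\exp\left(\lambda\sumlimits{t=1}{k}Z_t-\frac{3\lambda^2}{4}\sumlimits{t=1}{k}\sigma_t^2\right)
\end{align*}
satisfies $\mathbb{E}_k[S_k]\leq S_{k-1}$, because $\sigma_k$ is measurable with respect to $\mathbf{\gamma}_1,\cdots,\mathbf{\gamma}_{k-1}$. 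By the tower property, $\mathbb{E}[S_T]\leq 1$. Markov's inequality then gives $\mathbb{P}(S_T\geq 1/\delta)\leq \delta$. On the complementary event, $\lambda\sum_t Z_t\leq \frac{3\lambda^2}{4}\sum_t\sigma_t^2+\log\frac{1}{\delta}$, and dividing by $\lambda$ yields the claim. If $\sigma_t=0$, the hypothesis forces $Z_t=0$ almost surely, so that step is trivial.

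The main obstacle is the one-step bound. I would split into two cases according to the size of $\lambda\sigma_t$.

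\emph{Small case}, $\lambda^2\sigma_t^2\leq 16/9$. Use the elementary inequality $\mathrm{e}^x\leq x+\mathrm{e}^{9x^2/16}$ for all real $x$, applied with $x=\lambda Z_t$. Taking $\mathbb{E}_t$ kills the linear term, since $\mathbb{E}_t[Z_t]=0$, leaving $\mathbb{E}_t[\exp(cZ_t^2/\sigma_t^2)]$ with $c=9\lambda^2\sigma_t^2/16\leq 1$. Jensen's inequality for the concave map $u\mapsto u^c$ bounds this by $(\mathbb{E}_t[\exp(Z_t^2/\sigma_t^2)])^c\leq \mathrm{e}^c$. Hence the moment generating function is at most $\exp(9\lambda^2\sigma_t^2/16)\leq \exp(3\lambda^2\sigma_t^2/4)$.

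\emph{Large case}, $\lambda^2\sigma_t^2>16/9$. Young's inequality gives $\lambda Z_t\leq \frac{3\lambda^2\sigma_t^2}{8}+\frac{2Z_t^2}{3\sigma_t^2}$. Jensen with exponent $2/3$ bounds $\mathbb{E}_t[\exp(2Z_t^2/(3\sigma_t^2))]$ by $\mathrm{e}^{2/3}$. The result is at most $\exp(3\lambda^2\sigma_t^2/8+2/3)$. This is at most $\exp(3\lambda^2\sigma_t^2/4)$ because $2/3\leq 3\lambda^2\sigma_t^2/8$ in this regime.

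The only delicate point is verifying the scalar inequality $\mathrm{e}^x\leq x+\mathrm{e}^{9x^2/16}$, which I expect to be the main obstacle. I would handle it by calculus. For $x$ outside a bounded interval, the quadratic exponent dominates. For $|x|$ small, compare Taylor expansions: both sides agree to first order, and the second-order coefficients are $1/2$ on the left versus $9/16$ on the right. A short numerical or monotonicity check covers the intermediate range.
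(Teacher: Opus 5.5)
Your proof is correct and follows the standard route behind the cited result \cite{li2020high}, which the paper invokes without reproducing: a one-step bound $\mathbb{E}_t[\exp(\lambda Z_t)]\leq\exp(3\lambda^2\sigma_t^2/4)$, obtained from your two cases $\lambda\sigma_t\leq 4/3$ and $\lambda\sigma_t>4/3$ (both check out), followed by the exponential supermartingale and Markov's inequality. The step that needs real care is the scalar inequality $\mathrm{e}^x\leq x+\mathrm{e}^{9x^2/16}$: for $x\leq 0$ it follows at once from $\mathrm{e}^x\leq 1+x+x^2/2$, but for $x>0$ the gap is only about $2\times 10^{-3}$ near $x\approx 0.5$, so your local Taylor comparison covers very little and the intermediate-range check must be done with enough precision (or the inequality quoted as a known fact).
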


\section{Non-convex optimization}
\label{section non-convex}
In this section, we re-present SNAG (see Algorithm \ref{algorithm3}) with different step size setting compared to Algorithm \ref{algorithm2} and provide its convergence rate in the non-convex setting.
Algorithm \ref{algorithm3} could reduce to some famous algorithms, such as SGD, and was well studied in \cite{ghadimi2016accelerated,kavis2022high,yu2025convergence}.
Algorithm \ref{algorithm2} can be viewed as a special case of Algorithm \ref{algorithm3}.
%here, particularly the order of $\left\{\lambda_t\right\}_{t\in[T]}$.
\begin{algorithm}[H]
  \caption{Stochastic Nesterov's Accelerated Gradient Descent (SNAG)}
  \label{algorithm3}
  \begin{algorithmic}[1]
  \State {\bfseries Input:} Horizon $T$, $\xag{0}=\vx_0\in \mathbb{R}^d$, step sizes $\left\{\beta_t\right\}_{t\in[T]}, \{\lambda_t\}_{t\in [T]}$, averaging parameters $\left\{\alpha_t\right\}_{t\in[T]}$.
  \For{$t=1,\cdots, T$}
  \State $\xmd{t}=\left(1-\alpha_t\right)\xag{t-1}+\alpha_t \vx_{t-1}$;
  \State \textbf{Set} $\vg_t=\nabla f_\mathbf{z}\left(\vx_t^{md};\rvz_t\right)$;
  \State $\vx_t=\vx_{t-1}-\lambda_t \vg_t$;
  \State $\xag{t}=\xmd{t}-\beta_t \vg_t$.
  \EndFor
  \end{algorithmic}
\end{algorithm}

We have the following results for the above algorithm.
\begin{theorem}	\label{theorem 4}
  Let $T>0$ and $f$ be an $\left(L_0,L_1,L_2\right)$-smooth function.
	Under Assumptions \ref{assumption 1}-\ref{assumption 3},
	consider Algorithm \ref{algorithm3} with $\alpha_t=\frac{2}{t+1},$ $\lambda_t=\eta$ and 
	$\beta_t= \eta \alpha_t + \lambda_t,\forall t\in [T].$
	Let 
	\begin{align*}
	\eta = \min \left\{ {1\over (L_1+L_2) \mathcal{Y}}, \frac{1}{8 \mathcal{W} \left(B \logte+1\right)}, \frac{1}{4\sqrt{A\mathcal{W} T\logte \Ffour}}, \frac{1}{4\sqrt{C\mathcal{W} T\logte}}, \frac{1}{6\mathcal{P}_c^2\logte}\right\},
	\end{align*}
	where 
	\begin{align}
		\label{Y_c}
		\mathcal{Y}= \sqrt{A\logte\Ffour}+\left(\sqrt{B\logte}+1\right)\sqrt{g\left(\Ffour\right)}+\sqrt{C\logte},
	\end{align}
	$$\mathcal{W} = L_0+L_1 \left(g\left(\mathcal{K}\right)\right)^{\frac{p}{2}}+L_2\mathcal{K}^q,$$
	\begin{align}
		\label{P}
		\mathcal{P}_c=\sqrt{A\Ffour+B g\left(\Ffour\right)+C},
	\end{align}
	$$
\mathcal{K}=	\Ffour+\frac{1}{L_1+L_2}\sqrt{g(\Ffour)}+\frac{L_0+L_1\left(g(\Ffour)\right)^{\frac{p}{2}}+L_2\Ffour^q}{2\left(L_1+L_2\right)^2},
	$$
	$$
	\Ffour=  \trianglemd{1}+1+\frac{1}{L_1+L_2}\sqrt{g(1+\trianglemd{1})}+\frac{L_0+L_1\left(g(1+\trianglemd{1})\right)^{\frac{p}{2}}+L_2\left(1+\trianglemd{1}\right)^q}{2\left(L_1+L_2\right)^2},
	$$
	and $g$ is the function given by (\ref{g function}).
	Then with probability at least $1-2\delta,$
	\begin{align}
		\label{nonconvex convergence}
		\frac{1}{T}\sumlimits{l=1}{T} \norm{\nablaf{l}{md}}^2\leq & \frac{2\left(1+\trianglemd{1}\right)}{T}\left(L_1+L_2\right)\mathcal{Y}+ \frac{16\left(1+\trianglemd{1}\right)}{T} \mathcal{W}
		\left(B\logte+1\right)\notag\\
		&+ \frac{8\left(1+\trianglemd{1}\right)}{\sqrt{T}}\left(\sqrt{A \Ffour} + \sqrt{C}\right) \sqrt{\mathcal{W} \logte} + \frac{12\left(1+\trianglemd{1}\right)}{T} \mathcal{P}_c^2\logte.
	\end{align}
	%$$\eta\leq \frac{1}{\left(L_1+L_2\right)\left(\sqrt{A\logte\Ffour}+\left(\sqrt{B\logte}+1\right)\sqrt{g\left(\Ffour\right)}+\sqrt{C\logte}\right)}$$
	%  $$\eta\leq \frac{1}{8 \mathcal{W} \left(B \logte+1\right)}$$
	%  $$\eta\leq \frac{1}{4\sqrt{A\mathcal{W} T\logte \Ffour}}$$
	%  $$\eta\leq \frac{1}{4\sqrt{\mathcal{W}C T \logte}}$$
	%  $$\eta\leq \frac{1}{6\mathcal{P}_c^2\logte}$$
\end{theorem}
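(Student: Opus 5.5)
We will follow the template of the RSAG analysis of \cite{ghadimi2016accelerated} for non-convex problems, combined with the induction-on-function-value device already used in Lemma \ref{lemma C.3.1} and Lemma \ref{lemma C.6}. Let $\Gamma_t=\prod_{k=2}^t(1-\alpha_k)=\frac{2}{t(t+1)}$. As in Lemma \ref{lemma B.1}, we write $\xag{t}-\vx_t$ as a weighted sum of past stochastic gradients with weights $(\lambda_i-\beta_i)=-\eta\alpha_i$. Since $\xmd{t}-\xag{t-1}=\alpha_t(\vx_{t-1}-\xag{t-1})$, Jensen's inequality gives
\begin{align*}
\norm{\xmd{t}-\xag{t-1}}^2\leq \mathcal{O}\left(\eta^2\sumlimits{i=1}{t-1}\frac{\alpha_i^2}{\Gamma_i}\Gamma_{t}\alpha_t^2\cdots\norm{\vg_i}^2\right).
\end{align*}
The ratio $\alpha_i^2\Gamma_{t}/\Gamma_i$ stays bounded, so this is at most $\mathcal{O}(\eta^2\sum_i\norm{\vg_i}^2/t)$ up to constants. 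The key point is that, with $\beta_t=\eta(1+\alpha_t)\leq 2\eta$, every displacement ($\xag{t}-\xmd{t}$, $\vx_t-\vx_{t-1}$, and $\xmd{t+1}-\xag{t}$) is of order $\eta$ times gradient norms.

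The induction hypothesis is $\trianglemd{l}\leq \Ffour$ for all $l\in[t]$, together with the high-probability event (\ref{eq 32}) of Lemma \ref{lemma C.1}. Under this hypothesis, Corollary \ref{corollary 1} gives $\norm{\nablaf{l}{md}}\leq\sqrt{g(\Ffour)}$ and $\norm{\rvxi{l}}^2\leq \mathcal{P}_c^2\logte$. The first step size constraint $\eta\leq 1/((L_1+L_2)\mathcal{Y})$ then makes $\norm{\xag{l}-\xmd{l}}\leq\min\{1/L_1,1/L_2\}$, so Lemma \ref{lemma A.2} applies at $\xmd{l}$. The same holds for the step $\xag{l-1}\to\xmd{l}$, where we bound $\triangleag{l-1}\leq\mathcal{K}$ exactly as in (\ref{eq 18}); this is where $\mathcal{W}$ enters as the local smoothness constant.

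The descent step is the core of the argument. We apply Lemma \ref{lemma A.2} along $\xmd{l}\to\xag{l}$ and then along $\xag{l}\to\xmd{l+1}$. Alternatively, we can work directly with $f(\xmd{l+1})-f(\xmd{l})$, since $\xmd{l+1}-\xmd{l}=-\beta_l\vg_l+\alpha_{l+1}(\vx_l-\xag{l})$. This yields
\begin{align*}
\trianglemd{l+1}\leq \trianglemd{l}-\eta\norm{\nablaf{l}{md}}^2-\eta\la\nablaf{l}{md},\rvxi{l}\ra+\mathcal{W}\eta^2\left(\norm{\vg_l}^2+\text{(accumulated past terms)}\right).
\end{align*}
To handle the cross term $\la\nablaf{l}{md},\vx_l-\xag{l}\ra$, we use Young's inequality against half of $\eta\norm{\nablaf{l}{md}}^2$. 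Summing over $l\leq t$, the accumulated past terms collapse to $\mathcal{O}(\mathcal{W}\eta^2\sum\norm{\vg_l}^2)$ because $\sum_{t\geq i}\alpha_t^2\Gamma_t/\Gamma_i$ is bounded. We then split $\norm{\vg_l}^2\leq 2\norm{\nablaf{l}{md}}^2+2\norm{\rvxi{l}}^2$. The noise bound from (\ref{eq 32}) gives $\norm{\rvxi{l}}^2\leq(A\Ffour+B\norm{\nablaf{l}{md}}^2+C)\logte$. With $\eta\leq 1/(8\mathcal{W}(B\logte+1))$, the $B$-part and the gradient part are absorbed into $\frac{\eta}{4}\norm{\nablaf{l}{md}}^2$, which leaves an additive term $\mathcal{W}\eta^2 T(A\Ffour+C)\logte$. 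The martingale term $-\eta\sum\la\nablaf{l}{md},\rvxi{l}\ra$ is controlled by Lemma \ref{lemma high probability} with $\sigma_l^2=\eta^2\norm{\nablaf{l}{md}}^2\mathcal{P}_c^2$ and $\lambda=1/(6\eta\mathcal{P}_c^2)$, rescaled over $l\in[T]$ as in Lemma \ref{lemma 6.1}. This costs $\frac{\eta}{8}\sum\norm{\nablaf{l}{md}}^2+6\eta\mathcal{P}_c^2\logte$ and accounts for the second failure probability $\delta$.

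Collecting the pieces, we get
\begin{align*}
\trianglemd{t+1}+\frac{\eta}{8}\sumlimits{l=1}{t}\norm{\nablaf{l}{md}}^2\leq\trianglemd{1}+16\mathcal{W}\eta^2T(A\Ffour+C)\logte+6\eta\mathcal{P}_c^2\logte\leq \trianglemd{1}+1+\ldots .
\end{align*}
The last inequality uses the $\sqrt{T}$ and $\mathcal{P}_c^2$ constraints on $\eta$. This right-hand side plays the role of $1+\trianglemd{1}$, and passing through one more application of Lemma \ref{lemma A.2} as in (\ref{eq 18}) closes the induction at $\Ffour$. Dividing by $\eta T/8$ and inverting the minimum defining $\eta$ then produces the four terms of (\ref{nonconvex convergence}).

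The main obstacle will be the bookkeeping of the momentum error $\vx_l-\xag{l}$. Each such term must be bounded using only the induction hypothesis, and its summed contribution must be shown to be $\mathcal{O}(\mathcal{W}\eta^2\sum\norm{\vg_l}^2)$ with absolute constants small enough that absorption works with the stated factor $8$. I would first try to prove the explicit weight identity $\sum_{t=i}^T\alpha_t^2\Gamma_t/\Gamma_i\leq 2$ for $\alpha_t=2/(t+1)$.
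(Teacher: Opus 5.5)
\textbf{There is a genuine gap, and it comes from your choice of potential.} You run the descent on $f(\xmd{l})$, either through $\xmd{l}\to\xag{l}\to\xmd{l+1}$ or directly through $\xmd{l}\to\xmd{l+1}$. The step size and the constants $\mathcal{Y},\mathcal{K},\mathcal{W},\Ffour$ in the theorem are calibrated for tracking $f(\vx_t)$ instead, and two of your steps fail as written.

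(a) \emph{Your segments are too long for Lemma~\ref{lemma A.2}.}
\begin{itemize}
\item We have $\norm{\xag{l}-\xmd{l}}=\beta_l\norm{\vg_l}=\eta(1+\alpha_l)\norm{\vg_l}$, which equals $2\eta\norm{\vg_1}$ at $l=1$. On the direct route, $\xmd{2}-\xmd{1}=-\frac{4}{3}\eta\vg_1$.
\item The constraint $\eta\le 1/((L_1+L_2)\mathcal{Y})$ only gives $\eta\mathcal{Y}\le 1/(L_1+L_2)$. For $L_2=0$, for example, these lengths are not guaranteed to lie within $\min\{1/L_1,1/L_2\}=1/L_1$. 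So your assertion that the first constraint yields $\norm{\xag{l}-\xmd{l}}\le\min\{1/L_1,1/L_2\}$ is false.
\item For the same reason, $\triangleag{l-1}\le\mathcal{K}$ does not follow ``exactly as in (\ref{eq 18})''. That display goes from $\xag{t}$ to $\xmd{t+1}$. You need to go from $\xmd{l-1}$ to $\xag{l-1}$, a step of length up to $2\eta\mathcal{Y}$, which yields a constant larger than $\mathcal{K}$ even if it is admissible.
\end{itemize}

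(b) \emph{The momentum term is mis-accounted.}
\begin{itemize}
\item In $\la\nablaf{l}{md},\xmd{l+1}-\xmd{l}\ra$ there is the piece $\eta\alpha_{l+1}(1-\alpha_l)\la\nablaf{l}{md},\bar{\vg}_{l-1}\ra$, where $\bar{\vg}_{l-1}$ is a convex combination of $\vg_1,\dots,\vg_{l-1}$.
\item This piece is first order in $\eta$ and carries no factor $\mathcal{W}$. It is not a martingale increment: it is determined by the past, and $\xmd{l}$ depends on $\rvxi{i}$ for $i<l$.
\item After Young's inequality its remainder is of order $\eta\alpha_{l+1}^2\norm{\bar{\vg}_{l-1}}^2$, not $\mathcal{O}(\mathcal{W}\eta^2\sum\norm{\vg_l}^2)$. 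In particular, the constraint $\eta\le 1/(8\mathcal{W}(B\logte+1))$ cannot absorb its $B\logte\norm{\nablaf{i}{md}}^2$ part.
\item You would instead have to bound that noise uniformly by $\mathcal{P}_c^2\logte$ and rely on the numerical summability of the weights.
\end{itemize}
Your route might be repaired with a smaller step size and different constants, but it does not prove the theorem with the stated $\eta$.

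\textbf{The missing idea} is the potential used in the paper, which is also the one in the non-convex argument of \cite{ghadimi2016accelerated} that you cite: take $f(\vx_t)$, where $\vx_t=\vx_{t-1}-\eta\vg_t$ is a plain step.
\begin{itemize}
\item Apply Lemma~\ref{lemma A.2} along $\vx_{l-1}\to\vx_l$, a segment of length $\eta\norm{\vg_l}\le\eta\mathcal{Y}$.
\item Split $\nabla f(\vx_{l-1})=\nablaf{l}{md}+(\nabla f(\vx_{l-1})-\nablaf{l}{md})$. The first part gives the descent $-\eta\norm{\nablaf{l}{md}}^2$ and a genuine martingale term $-\eta\la\nablaf{l}{md},\rvxi{l}\ra$.
\item The second part is bounded through Definition~\ref{definition 1} by $\mathcal{W}\norm{\vx_{l-1}-\xmd{l}}\norm{\vg_l}$. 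Here $\xmd{l}-\vx_{l-1}$ is $-(1-\alpha_l)\eta$ times a convex combination of $\vg_1,\dots,\vg_{l-1}$, so its length is also at most $\eta\mathcal{Y}$.
\item The momentum therefore enters only at second order, as $\mathcal{W}\eta^2\norm{\vg_l}^2+\frac{\mathcal{W}}{2}\norm{\vx_{l-1}-\xmd{l}}^2$. The swap $[\sum_{k\ge l}(1-\alpha_k)\varGamma_k]\alpha_l/\varGamma_l\le 2$ then bounds the total by $2\eta^2\mathcal{W}\sum_l\norm{\vg_l}^2$.
\item In this route $\mathcal{K}$ is exactly the bound on $f(\vx_{l-1})-f^*$ obtained from $\trianglemd{l}\le\Ffour$ through the short segment $\xmd{l}\to\vx_{l-1}$.
\item The additive budget $\frac14+\frac14+\frac12$ gives $f(\vx_t)-f^*\le\trianglemd{1}+1$. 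One more short step $\vx_t\to\xmd{t+1}$ produces exactly $\Ffour$.
\end{itemize}
Every segment used has length at most $\eta\mathcal{Y}\le 1/(L_1+L_2)$, which is why the stated $\eta$ suffices.
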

The upper rate from (\ref{nonconvex convergence}) is of order $\tilde{O}({1/T} + \sqrt{(A+C)/T})$, which matches that in \cite{ghadimi2016accelerated} for SNAG with bounded variances  and also
the lower rate in \cite{arjevani2023lower} of finding stationary points in non-convex smooth stochastic optimizations with bounded variances when $C>0$.

Under the $\left(L_0,L_1\right)$-smoothness assumption, \cite{yu2025convergence} analyzed SNAG for non-convex objective functions and they proved that the average of the squared norm converges at the rate of $\tilde{\mathcal{O}}\left(1/T+\sqrt{\left(A+C\right)/T}\right)$ with high probability.
Here, we follow the analytical approach from \cite{yu2025convergence} and make slight modifications to the proof methods to accommodate the more general smooth assumptions.
To prove the theorem,
we first provide several useful lemmas following from \cite{ghadimi2016accelerated,kavis2022high,yu2025convergence}.
\begin{proposition}[Proposition 5.2 in \cite{kavis2022high}]
  \label{proposition 1}
  Denote  $\varGamma_t=\left(1-\alpha_t\right)\varGamma_{t-1}$ with $\alpha_t=\frac{2}{t+1}$ and $\varGamma_1=1$, $\forall t\in [T]$. We have that for all $t\in[T]$,
  \begin{align}
    \label{proposition 1.1}
   \varGamma_{t}\sumlimits{k=1}{t}\frac{\alpha_k}{\varGamma_k}=1,
  \end{align}
  and 
  \begin{align}
    \label{proposition 1.2}
    \left[\sumlimits{k=t}{T}\left(1-\alpha_k\right)\varGamma_k\right]\frac{\alpha_t}{\varGamma_t}\leq 2.
  \end{align}
\end{proposition}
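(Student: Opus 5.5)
We need to prove Proposition \ref{proposition 1}. Here $\varGamma_1=1$ and $\varGamma_t=(1-\alpha_t)\varGamma_{t-1}=\frac{t-1}{t+1}\varGamma_{t-1}$. The plan is to find the closed form $\varGamma_t=\frac{2}{t(t+1)}$ and then verify both claims by elementary telescoping.

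\textbf{Closed form.} We argue by induction on $t$. For $t=1$ we have $\frac{2}{1\cdot 2}=1=\varGamma_1$, consistent with $\alpha_1=1$. For the step,
\[
\varGamma_t=\frac{t-1}{t+1}\cdot\frac{2}{(t-1)t}=\frac{2}{t(t+1)}.
\]
It follows that $\frac{\alpha_k}{\varGamma_k}=\frac{2}{k+1}\cdot\frac{k(k+1)}{2}=k$.

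\textbf{Identity (\ref{proposition 1.1}).} Using $\alpha_k/\varGamma_k=k$,
\[
\varGamma_t\sum_{k=1}^t k=\frac{2}{t(t+1)}\cdot\frac{t(t+1)}{2}=1.
\]

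\textbf{Inequality (\ref{proposition 1.2}).} First compute
\[
(1-\alpha_k)\varGamma_k=\frac{k-1}{k+1}\cdot\frac{2}{k(k+1)}=\frac{2(k-1)}{k(k+1)^2}.
\]
Since $k(k+1)^2\geq (k-1)\,k(k+1)$, we have $\frac{k-1}{k(k+1)^2}\le \frac{1}{k(k+1)}$. Therefore
\[
\sum_{k=t}^T(1-\alpha_k)\varGamma_k\le 2\sum_{k=t}^{\infty}\Big(\frac1k-\frac1{k+1}\Big)=\frac{2}{t}.
\]
Multiplying by $\alpha_t/\varGamma_t=t$ gives a bound of $2$. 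The $k=1$ term vanishes, so the case $t=1$ is covered as well.

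Neither step presents a real obstacle. The only care needed is the base case $\alpha_1=1$, which makes $\varGamma_0$ irrelevant, and the termwise bound used before telescoping.
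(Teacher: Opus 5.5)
Your proof is correct: the closed form $\varGamma_t=\frac{2}{t(t+1)}$ (with the recursion read for $t\ge 2$, since $\alpha_1=1$), the identity $\alpha_k/\varGamma_k=k$, and the telescoping bound $\sum_{k=t}^T(1-\alpha_k)\varGamma_k\le\sum_{k=t}^T\varGamma_k\le 2/t$ give both claims. The paper does not prove this itself but imports it from \cite{kavis2022high}, and your direct verification is the standard argument; your termwise bound is simply $(1-\alpha_k)\le 1$.
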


\begin{lemma}
  \label{high probability sum non-convex constant}
  Given $T\geq 1 $ and $\delta\in \left(0,1\right)$, if Assumptions \ref{assumption 2} and \ref{assumption 3} hold, then with probability at least $1-\delta$, 
  \begin{align}
    \label{inequality high probability non-convex constant}
    \sumlimits{k=1}{l}-\la \nablaf{k}{md},\rvxi{k} \ra\leq \frac{1}{4}\sumlimits{k=1}{l}\frac{\mathcal{P}_k^2}{\mathcal{P}_c^2}\norm{\nablaf{k}{md}}^2+3\mathcal{P}_c^2\log\frac{T}{\delta}, \quad \forall l\in[T],
  \end{align}
  where 
  \begin{align}
    \label{P_t}
    \mathcal{P}_k=\sqrt{A\trianglemd{k}+B g\left(\trianglemd{k}\right)+C}, 
  \end{align}
and $ \mathcal{P}_c$ is given by (\ref{P}).
\end{lemma}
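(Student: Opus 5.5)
The plan mirrors the proofs of Lemmas \ref{lemma 6.1} and \ref{lemma D.2.2}: apply Lemma \ref{lemma high probability} to the martingale difference sequence $Z_k=-\la \nablaf{k}{md},\rvxi{k}\ra$, then rescale $\delta$ so the bound holds uniformly in $l\in[T]$.

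\textbf{Martingale property.} The point $\xmd{k}$ is measurable with respect to $\rvz_1,\dots,\rvz_{k-1}$. By Assumption \ref{assumption 2}, $\mathbb{E}_k[\rvxi{k}]=0$, so $\mathbb{E}_k[Z_k]=0$.

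\textbf{Sub-Gaussian scale.} Take
\[
\sigma_k^2=\norm{\nablaf{k}{md}}^2\left(A\trianglemd{k}+B\norm{\nablaf{k}{md}}^2+C\right),
\]
which is $\rvz_{1:k-1}$-measurable. Cauchy--Schwarz gives $Z_k^2\le \norm{\nablaf{k}{md}}^2\norm{\rvxi{k}}^2$. Assumption \ref{assumption 3} at $\vx=\xmd{k}$ then yields $\mathbb{E}_k[\exp(Z_k^2/\sigma_k^2)]\le \mathrm{e}$. The degenerate case $\sigma_k=0$ is harmless, since then $Z_k=0$ almost surely (or one adds an arbitrarily small $\epsilon$ and lets it vanish). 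Corollary \ref{corollary 1} with $G=\trianglemd{k}$ gives $\norm{\nablaf{k}{md}}^2\le g(\trianglemd{k})$, hence $\sigma_k^2\le \norm{\nablaf{k}{md}}^2\mathcal{P}_k^2$ with $\mathcal{P}_k$ as in (\ref{P_t}).

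\textbf{Concentration and union bound.} For a fixed $l$ and fixed $\lambda>0$, Lemma \ref{lemma high probability} gives, with probability at least $1-\delta/T$,
\[
\sum_{k=1}^l Z_k\le \frac{3\lambda}{4}\sum_{k=1}^l \mathcal{P}_k^2\norm{\nablaf{k}{md}}^2+\frac1\lambda\log\frac{T}{\delta}.
\]
A union bound over $l\in[T]$ makes this hold for all $l$ simultaneously with probability at least $1-\delta$. Choosing the deterministic value $\lambda=\frac{1}{3\mathcal{P}_c^2}$ turns the coefficient into $\frac{1}{4\mathcal{P}_c^2}$ and the additive term into $3\mathcal{P}_c^2\log\frac{T}{\delta}$, which is exactly (\ref{inequality high probability non-convex constant}).

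\textbf{Main obstacle.} The only delicate point is that $\lambda$ in Lemma \ref{lemma high probability} must be fixed in advance, not random. This is why the normalization uses the deterministic constant $\mathcal{P}_c$ from (\ref{P}) rather than the random $\mathcal{P}_k$. No boundedness of $\trianglemd{k}$ is needed at this stage; comparing $\mathcal{P}_k$ with $\mathcal{P}_c$ is deferred to the later induction.
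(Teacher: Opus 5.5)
Your proposal is correct and follows the paper's proof essentially step for step. It uses the same martingale difference $Z_k=-\la \nablaf{k}{md},\rvxi{k}\ra$, the same Cauchy--Schwarz step feeding Assumption \ref{assumption 3} into Lemma \ref{lemma high probability}, Corollary \ref{corollary 1} to pass to $\mathcal{P}_k$, a union bound over $l$ (the paper's ``rescale over $\delta$''), and the deterministic choice $\lambda=1/(3\mathcal{P}_c^2)$. Your remarks on the degenerate case $\sigma_k=0$ and on why $\lambda$ must be non-random are clarifications that the paper leaves implicit.
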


\begin{proof}
  Let $Z_k=-\la \nablaf{k}{md},\rvxi{k}\ra$. 
Note that $\nablaf{k}{md}$ is a random variable dependent on $\rvz_1,\cdots,\rvz_{k-1}$ and $\rvxi{k}$ is dependent on $\rvz_1,\cdots,\rvz_k$. Therefore, it is apparent that $Z_k$ is a martingale difference sequence since
$$\mathbb{E}\left[- \la \nablaf{k}{md},\rvxi{t}\ra|\rvz_1,\cdots,\rvz_{k-1}\right]=-\la \nablaf{k}{md},\mathbb{E}_k[\rvxi{k}]\ra=0.$$
Also by Assumption \ref{assumption 3} and applying Cauchy-Schwarz inequality, we obtain that
\begin{align*}
	&\mathbb{E}_k\left[\exp\left(\frac{Z_k^2}{\norm{\nablaf{k}{md}}^2\left(A\trianglemd{k}+B\norm{\nablaf{k}{md}}^2+C\right)}\right)\right]\notag\\
  \leq & \mathbb{E}_k \left[\exp\left(\frac{\norm{\nablaf{k}{md}}^2\norm{\rvxi{k}}^2}{\norm{\nablaf{k}{md}}^2\left(A\trianglemd{k}+B\norm{\nablaf{k}{md}}^2+C\right)}\right)   \right] \leq \text{e}.
\end{align*}
Therefore, given any $l\in [T]$, applying Lemma \ref{lemma high probability}, we have that for any $\lambda>0$, with probability at least $1-\delta$,
\begin{align*}
	\sumlimits{k=1}{l}Z_t\leq &\frac{3\lambda}{4}\sumlimits{k=1}{l}\norm{\nablaf{k}{md}}^2\left(A\trianglemd{k}+B\norm{\nablaf{k}{md}}^2+C\right)+\frac{1}{\lambda}\log\frac{1}{\delta}\notag\\
  \leq & \frac{3\lambda}{4}\sumlimits{k=1}{l}\norm{\nablaf{k}{md}}^2\mathcal{P}_k^2+\frac{1}{\lambda}\log\frac{1}{\delta}
\end{align*}
where $\mathcal{P}_k$ is defined in (\ref{P_t}).
For any fixed $\lambda$, we can  re-scale over $\delta$ and have that  with probability at least $1-\delta$, for all $l\in[T]$,
\begin{align}
	\label{4.14}
	\sumlimits{k=1}{l}-\la\nablaf{k}{md},\rvxi{k}\ra\leq \frac{3\lambda}{4}\sumlimits{t=1}{l}\norm{\nablaf{k}{md}}^2 \mathcal{P}_k^2+\frac{1}{\lambda}\log\frac{T}{\delta}.
\end{align}
Let $\lambda=\frac{1}{3 \mathcal{P}_c^2}$, and we obtain the desired result.
\end{proof}

\begin{proposition}
 	\label{proposition overline x_t - x_t}
 	Let $\{\vx_t\}_{t\in[T]}$ and $\{\xmd{t}\}_{t\in[T]}$ be generated by Algorithm \ref{algorithm3}. We have
 	\begin{align}
 		\label{overline x_t-x_t}
 		\xmd{t}-\vx_{t-1} =\left(1-\alpha_t\right)\varGamma_{t-1}\sumlimits{k=1}{t-1}\frac{\alpha_k}{\varGamma_k}\frac{\left(\lambda_k-\beta_k\right)}{\alpha_k}\vg_k,
 	\end{align}
 	and
 	\begin{align}
    \label{eq 91}
 		\norm{\xmd{t}-\vx_{t-1}}^2\leq \left(1-\alpha_t\right)\varGamma_t\sumlimits{k=1}{t-1}\frac{\alpha_k}{\varGamma_k}\frac{\left(\lambda_k-\beta_k\right)^2}{\alpha_k^2}\norm{\vg_k}^2.
 	\end{align}
 \end{proposition}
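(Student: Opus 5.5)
The plan is to unroll the recursion for the gap $\xag{t}-\vx_t$, exactly as in the proof of Lemma \ref{lemma B.1}, but with the weights $\varGamma_t$ in place of $A_t$. Then convert the resulting weighted sum into a convex combination via (\ref{proposition 1.1}) and apply Jensen's inequality.

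\textbf{Step 1 (one-step recursion).} From the updates of Algorithm \ref{algorithm3}, $\xmd{t}-\vx_{t-1}=(1-\alpha_t)\left(\xag{t-1}-\vx_{t-1}\right)$. Combining this with the two update lines for $\vx_t$ and $\xag{t}$ gives
\begin{align*}
\xag{t}-\vx_t=\xmd{t}-\vx_{t-1}+(\lambda_t-\beta_t)\vg_t=(1-\alpha_t)\left(\xag{t-1}-\vx_{t-1}\right)+(\lambda_t-\beta_t)\vg_t .
\end{align*}

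\textbf{Step 2 (unrolling).} I will show by induction that
\begin{align*}
\xag{t}-\vx_t=\varGamma_t\sumlimits{k=1}{t}\frac{\lambda_k-\beta_k}{\varGamma_k}\vg_k .
\end{align*}
For the base case $t=1$: since $\alpha_1=1$ and $\xag{0}=\vx_0$, the recursion gives $\xag{1}-\vx_1=(\lambda_1-\beta_1)\vg_1$, which agrees with the formula because $\varGamma_1=1$. Treating $t=1$ directly avoids the undefined $\varGamma_0$. For the inductive step, dividing the recursion by $\varGamma_t=(1-\alpha_t)\varGamma_{t-1}$ turns it into the additive relation $(\xag{t}-\vx_t)/\varGamma_t=(\xag{t-1}-\vx_{t-1})/\varGamma_{t-1}+(\lambda_t-\beta_t)\vg_t/\varGamma_t$, which telescopes. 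Substituting this closed form (at index $t-1$) into $\xmd{t}-\vx_{t-1}=(1-\alpha_t)(\xag{t-1}-\vx_{t-1})$ and writing $\frac{\lambda_k-\beta_k}{\varGamma_k}=\frac{\alpha_k}{\varGamma_k}\cdot\frac{\lambda_k-\beta_k}{\alpha_k}$ yields (\ref{overline x_t-x_t}).

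\textbf{Step 3 (norm bound).} By (\ref{proposition 1.1}) at index $t-1$, the weights $w_k=\varGamma_{t-1}\alpha_k/\varGamma_k$ for $k\in[t-1]$ are nonnegative and sum to $1$. Jensen's inequality (convexity of $\norm{\cdot}^2$) then gives
\begin{align*}
\norm{\varGamma_{t-1}\sumlimits{k=1}{t-1}\frac{\alpha_k}{\varGamma_k}\frac{\lambda_k-\beta_k}{\alpha_k}\vg_k}^2\leq \varGamma_{t-1}\sumlimits{k=1}{t-1}\frac{\alpha_k}{\varGamma_k}\frac{(\lambda_k-\beta_k)^2}{\alpha_k^2}\norm{\vg_k}^2 .
\end{align*}
Multiplying by $(1-\alpha_t)^2$ and using $(1-\alpha_t)^2\varGamma_{t-1}=(1-\alpha_t)\varGamma_t$ gives (\ref{eq 91}).

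The only delicate point is the indexing at $t=1$, where $\varGamma_0$ is undefined and the sum is empty. In that case both sides vanish, since $\alpha_1=1$ gives $\xmd{1}=\vx_0$. Everything else is a routine telescoping argument.
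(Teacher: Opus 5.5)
Your proposal is correct and follows essentially the same route as the paper: unroll $\xag{k}-\vx_k=(1-\alpha_k)(\xag{k-1}-\vx_{k-1})+(\lambda_k-\beta_k)\vg_k$ from $\xag{0}=\vx_0$, substitute into $\xmd{k}-\vx_{k-1}=(1-\alpha_k)(\xag{k-1}-\vx_{k-1})$, and apply Jensen with the convex weights supplied by (\ref{proposition 1.1}). The differences are cosmetic. You apply Jensen directly to the vector sum instead of first using the triangle inequality. You also handle the $t=1$ case, where $\varGamma_0$ is undefined, explicitly, whereas the paper glosses over it.
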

\begin{proof}
  From Algorithm \ref{algorithm3}, we have
  \begin{align*}
    \xag{k}-\vx_{k}=  \xmd{k}-\beta_k \vg_k-\vx_{k-1}+\lambda_k \vg_k
    =  \left(1-\alpha_k\right)\left(\xag{k-1}-\vx_{k-1}\right)+\left(\lambda_k-\beta_k\right)\vg_k.
  \end{align*}
Since $\xag{0}=\vx_0$, we obtain that
\begin{align*}
  \xag{k}-\vx_{k} = & \sumlimits{i=1}{k} \left(\prod_{j=i+1}^k\left(1-\alpha_j\right) \right)  \left(\lambda_i-\beta_i\right)\vg_i
   =  \varGamma_k\sumlimits{i=1}{k} \frac{1}{\varGamma_i}\left(\lambda_i-\beta_i\right)\vg_i.
\end{align*}
Taking the norm function on both sides and applying the triangle inequality, we have
\begin{align}
  \label{eq 92}
  \norm{\xag{k}-\vx_{k}} \leq & \varGamma_k\sumlimits{i=1}{k}\frac{1}{\varGamma_i}\left|\lambda_i-\beta_i\right| \cdot \norm{\vg_i}
  =  \varGamma_k \sumlimits{i=1}{k}\frac{\alpha_i}{\varGamma_i}\frac{\left|\lambda_i-\beta_i\right|}{\alpha_i} \cdot \norm{\vg_i}.
\end{align}
By the iteration step in Algorithm \ref{algorithm3}, we have 
\begin{align*}
  \xmd{k}-\vx_{k-1}=\left(1-\alpha_k\right)\left(\xag{k-1}-\vx_{k-1}\right).
\end{align*}
Combining with (\ref{eq 92}), we obtain that
\begin{align*}
  \norm{\xmd{k}-\vx_{k-1}}= & \left(1-\alpha_k\right) \norm{\xag{k-1}-\vx_{k-1}}
  \leq  \left(1-\alpha_k\right)\varGamma_{k-1}\sumlimits{i=1}{k-1}\frac{\alpha_i}{\varGamma_i}\frac{\left|\lambda_i-\beta_i\right|}{\alpha_i} \cdot \norm{\vg_i}.
\end{align*}
Similarly, by the convexity of norm square and (\ref{proposition 1.1}), 
	\begin{align*}
 		\norm{\xmd{k}-\vx_{k-1}}^2\leq & \left(1-\alpha_k\right)^2\varGamma_{k-1}\sumlimits{i=1}{k-1}\frac{\alpha_i}{\varGamma_i}\frac{\left(\lambda_i-\beta_i\right)^2}{\alpha_k^2}\norm{\vg_i}^2\notag\\
    = & \left(1-\alpha_k\right)\varGamma_k\sumlimits{i=1}{k-1}\frac{\alpha_i}{\varGamma_i}\frac{\left(\lambda_i-\beta_i\right)^2}{\alpha_k^2}\norm{\vg_i}^2.
 	\end{align*}
 \end{proof}

 \begin{lemma}
	\label{sqrt sum}
	Let $\{a_t\}_{t\in[n]}$ be a sequence of non-negative real numbers. We have
	\begin{align*}
		\sqrt{\sum_{i=1}^n a_i}\leq \sumlimits{i=1}{n}\sqrt{a_i}.
	\end{align*}
\end{lemma}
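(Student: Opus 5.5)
It suffices to prove $\sum_{i=1}^n a_i\leq \left(\sum_{i=1}^n\sqrt{a_i}\right)^2$, because both sides of the claimed inequality are non-negative and $u\mapsto\sqrt{u}$ is monotonically increasing on $[0,\infty)$. Taking square roots of this squared inequality then gives exactly the statement of Lemma \ref{sqrt sum}.

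To obtain the squared inequality, I will set $b_i=\sqrt{a_i}\geq 0$, which is well defined since each $a_i$ is non-negative. Expanding the square yields
\begin{align*}
\left(\sum_{i=1}^n b_i\right)^2=\sum_{i=1}^n b_i^2+2\sum_{1\leq i<j\leq n} b_ib_j .
\end{align*}
Every cross term $b_ib_j$ is non-negative, so the right-hand side is at least $\sum_{i=1}^n b_i^2=\sum_{i=1}^n a_i$, which is the desired bound.

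An alternative route is induction on $n$. The case $n=1$ is trivial, and the inductive step reduces to the two-term inequality $\sqrt{x+y}\leq\sqrt{x}+\sqrt{y}$ for $x,y\geq 0$, which follows by squaring: $x+y\leq x+y+2\sqrt{xy}$. I would use the direct expansion above since it is shorter. There is no real obstacle here; the only point that needs explicit mention is that non-negativity is used twice, once to define $\sqrt{a_i}$ and once to discard the cross terms.
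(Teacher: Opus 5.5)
Your proof is correct: since both sides are non-negative, squaring reduces the claim to $\sum_{i=1}^n a_i\leq\bigl(\sum_{i=1}^n\sqrt{a_i}\bigr)^2$, and expanding the square and dropping the non-negative cross terms $2\sqrt{a_ia_j}$ settles it. The paper states this lemma without proof as an elementary fact (it is used only to split the square root in (\ref{eq 94})), and your squaring-and-expanding argument is the standard justification it implicitly relies on.
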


In the following analysis, denote $\triangle_t=f\left(\vx_t\right)-f^*$ for simplicity.

\begin{lemma}
  \label{nonconvex bound}
  Under the conditions and notations of Theorem \ref{theorem 4}, $\trianglemd{t}\leq \Ffour, \forall t\in[T]$, hold with probability at least $1-\delta$.
  %, where  $\mathcal{P}_t$, $\mathcal{P}_c$ are in \ref{P_t}, \ref{P} and $\Ffour$  is defined as
%  \begin{align}
%    \label{Gconstant}
%    =.
%  \end{align} 
\end{lemma}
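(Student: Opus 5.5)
We argue by induction on $t$, on the event where both (\ref{eq 32}) (Lemma \ref{lemma C.1}, applied at the points $\xmd{t}$) and (\ref{inequality high probability non-convex constant}) of Lemma \ref{high probability sum non-convex constant} hold. This event has probability at least $1-\delta$ after rescaling $\delta$, which only changes the constant in front of $\logte$ and is absorbed into the definitions of $\mathcal{Y}$ and $\mathcal{P}_c$.

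The shape of $\Ffour$ is exactly the one-step bound of Lemma \ref{lemma C.3.1}, with $\mathcal{C}_1$ replaced by $1+\trianglemd{1}$. So the real claim to propagate is the auxiliary statement
\[
\triangleag{t}\le 1+\trianglemd{1},
\]
and then to lift it to $\trianglemd{t+1}\le\Ffour$. The base case is trivial since $\xmd{1}=\xag{0}=\vx_0$. Assume $\trianglemd{l}\le\Ffour$ for all $l\in[t]$. By Corollary \ref{corollary 1}, $\norm{\nablaf{l}{md}}^2\le g(\Ffour)$, and by (\ref{eq 32}), $\norm{\rvxi{l}}^2\le \mathcal{P}_c^2\logte$. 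Hence
\[
\norm{\vg_l}\le \sqrt{A\logte\Ffour}+\left(\sqrt{B\logte}+1\right)\sqrt{g(\Ffour)}+\sqrt{C\logte}=\mathcal{Y}.
\]
Since $\beta_l\le 2\eta$, the choice $\eta\le 1/((L_1+L_2)\mathcal{Y})$ makes every step $\beta_l\norm{\vg_l}$ short enough for Lemma \ref{lemma A.2}, at least after halving constants.

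\textbf{Bounding $\triangleag{t}$.} Apply Lemma \ref{lemma A.2} at $\xmd{l}$:
\[
f(\xag{l})\le f(\xmd{l})-\beta_l\norm{\nablaf{l}{md}}^2-\beta_l\la\nablaf{l}{md},\rvxi{l}\ra+\tfrac{\mathcal{W}}{2}\beta_l^2\norm{\vg_l}^2.
\]
The smoothness constant is controlled by $\mathcal{W}$ because $\Ffour\le\mathcal{K}$. Next relate $f(\xmd{l+1})$ to $f(\xag{l})$ through $\xmd{l+1}-\xag{l}=\alpha_{l+1}(\vx_l-\xag{l})$. Since $(\lambda_k-\beta_k)/\alpha_k=-\eta$, Proposition \ref{proposition overline x_t - x_t} together with (\ref{proposition 1.1}) gives
\[
\norm{\xmd{l}-\vx_{l-1}}^2\le\eta^2(1-\alpha_l)\varGamma_l\sum_{k<l}\frac{\alpha_k}{\varGamma_k}\norm{\vg_k}^2.
\]
Telescoping the resulting one-step inequalities, the negative terms $-\eta\sum\norm{\nablaf{l}{md}}^2$ must absorb three things:
\begin{itemize}
\item the martingale sum, handled by (\ref{inequality high probability non-convex constant}), using $\mathcal{P}_k\le\mathcal{P}_c$ so that it costs at most $\tfrac14\sum\norm{\nablaf{k}{md}}^2+3\mathcal{P}_c^2\log\frac{T}{\delta}$;
\item the $B$-part of the second-order noise terms, handled by $\eta\le 1/(8\mathcal{W}(B\logte+1))$;
\item the momentum error terms, handled by swapping the double sum and applying (\ref{proposition 1.2}), which contributes only a factor $2$.
\end{itemize}
What remains is $\eta^2 T\mathcal{W}(A\Ffour+C)\logte+\eta\mathcal{P}_c^2\logte$. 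This is at most $1$ by the last three constraints in the definition of $\eta$, so $\triangleag{t}\le 1+\trianglemd{1}$.

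\textbf{Lifting to $\xmd{t+1}$.} Using $\norm{\vg_k}\le\mathcal{Y}$ and the displayed bound on the momentum displacement, we get $\norm{\xmd{t+1}-\xag{t}}\le 2\eta\mathcal{Y}\le 1/(L_1+L_2)$. Lemma \ref{lemma A.2}, Cauchy–Schwarz and Corollary \ref{corollary 1} applied at $\xag{t}$, exactly as in (\ref{eq 18}), then give $\trianglemd{t+1}\le\Ffour$, which closes the induction.

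The main obstacle is the cross term $\la\nabla f(\xag{l}),\xmd{l+1}-\xag{l}\ra$ in the non-convex setting. Convexity is not available, so it must be controlled by comparing $\nabla f(\xag{l})$ with $\nablaf{l}{md}$ through the $(L_0,L_1,L_2)$-smoothness (the two points are within $\min\{1/L_1,1/L_2\}$). The leftover $\mathcal{W}\norm{\cdot}^2$ pieces are then bounded by the weighted sums above. If this route becomes too lossy, I would fall back to telescoping $f(\xmd{l})$ directly, following \cite{yu2025convergence}.
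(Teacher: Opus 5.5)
There is a genuine gap, and it is the step you flag as the ``main obstacle''. Telescoping along $\xmd{l}\to\xag{l}\to\xmd{l+1}$, the momentum step contributes $\la\nabla f(\xag{l}),\xmd{l+1}-\xag{l}\ra$, where $\xmd{l+1}-\xag{l}=\alpha_{l+1}(\vx_l-\xag{l})=\alpha_{l+1}\eta\varGamma_l\sum_{i\le l}\frac{\alpha_i}{\varGamma_i}\vg_i$. This term is \emph{first order} in $\eta$ and has no sign. In the convex analysis convexity handles this link, but here it is unavailable. Replacing $\nabla f(\xag{l})$ by $\nablaf{l}{md}$ via smoothness removes only a second-order piece. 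What is left, $\alpha_{l+1}\eta\la\nablaf{l}{md},\varGamma_l\sum_{i\le l}\frac{\alpha_i}{\varGamma_i}\vg_i\ra$, pairs the current gradient with past stochastic gradients, and the $\rvxi{i}$ parts with $i<l$ are not martingale differences. So it must be split by Young's inequality. That charges an extra $\Theta(\eta)\norm{\nablaf{l}{md}}^2$ against the descent and creates additional terms of order $\eta\norm{\rvxi{i}}^2$. None of this appears in your three bullets or in your ``what remains'' tally. Hence the claim that the leftover is at most $1$ is unsupported, and that claim is exactly what makes $\Ffour$ be built on $\trianglemd{1}+1$.

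The route also does not fit the lemma's fixed hypotheses:
\begin{itemize}
\item $\beta_1=2\eta$, so $\beta_l\norm{\vg_l}$ can reach $2\eta\mathcal{Y}$. The constraint $\eta\le 1/((L_1+L_2)\mathcal{Y})$ does not give $2\eta\mathcal{Y}\le\min\{1/L_1,1/L_2\}$ (take $L_2=0$), so ``halving constants'' changes the statement.
\item Your martingale term carries weights $\beta_l/\eta=1+\alpha_l$, not the unit weights of Lemma \ref{high probability sum non-convex constant}.
\end{itemize}

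The paper avoids the cross term by telescoping $f(\vx_l)$ instead. This sequence moves by exactly $-\eta\vg_l$, so its step length is $\eta\norm{\vg_l}\le\eta\mathcal{Y}$. Then
\begin{align*}
-\eta\la\nabla f(\vx_{l-1}),\vg_l\ra=-\eta\norm{\nablaf{l}{md}}^2-\eta\la\nablaf{l}{md},\rvxi{l}\ra-\eta\la\nabla f(\vx_{l-1})-\nablaf{l}{md},\vg_l\ra ,
\end{align*}
and momentum enters only through the last term, which is second order. Lemma \ref{lemma A.2} from $\xmd{l}$ gives $f(\vx_{l-1})-f^*\le\mathcal{K}$, so the smoothness constant at $\vx_{l-1}$ is at most $\mathcal{W}$. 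Smoothness plus Young then bound the last term by $\frac{\mathcal{W}}{2}\eta^2\norm{\vg_l}^2+\frac{\mathcal{W}}{2}\norm{\xmd{l}-\vx_{l-1}}^2$. Using \eqref{eq 91} with $(\beta_k-\lambda_k)/\alpha_k=\eta$, swapping the sums and applying \eqref{proposition 1.2}, the second piece totals at most $\eta^2\mathcal{W}\sum_l\norm{\vg_l}^2$. The martingale is then exactly the unit-weight sum of Lemma \ref{high probability sum non-convex constant}, and the constraints on $\eta$ give $f(\vx_t)-f^*\le\trianglemd{1}+1$. The lift is from $\vx_t$, not $\xag{t}$, to $\xmd{t+1}$, using $\norm{\xmd{t+1}-\vx_t}\le\eta\mathcal{Y}\le 1/(L_1+L_2)$ by \eqref{proposition 1.1}.

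Finally, your rescaling of $\delta$ cannot be ``absorbed'', because $\eta$ and $\mathcal{Y}$ are defined with $\logte$ fixed. Intersecting the events \eqref{eq 32} and \eqref{inequality high probability non-convex constant} honestly gives probability $1-2\delta$. That is also all the paper's own proof delivers, and it is what Theorem \ref{theorem 4} uses.
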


\begin{proof}
   We assume that (\ref{eq 32}) and (\ref{inequality high probability non-convex constant}) always happen and then deduce $\trianglemd{t}\leq \Ffour$ for all $t\in[T]$. 
  Since (\ref{eq 32}) and (\ref{inequality high probability non-convex constant}) happen with probability at least $1-\delta$ separately, $\trianglemd{t}\leq \Ffour$, $\forall t\in[T]$, holds with probability at least $1-2\delta$. 
  It is obvious that $f(\xmd{1})-f^*\leq \Ffour$. Therefore, by Corollary \ref{corollary 1} we have $\mathcal{P}_1\leq \mathcal{P}_c$.
  Suppose that for some $t\in[T]$, $$f(\xmd{l})-f^*\leq \Ffour,\quad\forall l\in[t].$$ 
  By the triangle inequality of the norm function, we have that for all $l\in[t]$,
  \begin{align}
    \label{eq 94}
    \norm{\vg_l}\leq & \norm{\vg_l-\nablaf{l}{md}}+\norm{\nablaf{l}{md}}\notag\\
    \leq & \sqrt{\left(A\trianglemd{l}+B\norm{\nablaf{l}{md}}^2+C\right)\logte}+\norm{\nablaf{l}{md}}\notag\\
    \leq & \sqrt{A\logte\trianglemd{l}}+\left(\sqrt{B\logte}+1\right)\norm{\nablaf{l}{md}}+\sqrt{C\logte},
   \end{align}
  where the second inequality follows from Lemma \ref{lemma C.1} and the last inequality follows from Lemma \ref{sqrt sum}.
  Combining with Corollary \ref{corollary 1} and the assumption that $\trianglemd{l}\leq \Ffour, \forall l\in[t]$, we have
  \begin{align} 
    \label{g_t}
    \norm{\vg_l} \leq  \mathcal{Y}, \quad \forall l\in[t].
  \end{align}
    By the iteration step of Algorithm \ref{algorithm3}, we have
  \begin{align*}
  \norm{\vx_l-\vx_{l-1}}=\lambda_l\norm{\vg_l}=\eta \norm{\vg_l}\leq \eta \mathcal{Y}\leq \min\left\{1/L_1,1/L_2\right\}, \quad \forall l\in[t],
  \end{align*}
  where the last inequality follows from the restriction of $\eta$.
 Thus, we could apply Lemma \ref{lemma A.2} and obtain that
\begin{align*}
   f(\vx_l)-f(\vx_{l-1})\leq & \la \nabla f(\vx_{l-1}),\vx_l-\vx_{l-1}\ra+\frac{L_0+L_1\norm{\nabla f(\vx_{l-1})}^p+L_2\triangle_{l-1}^q}{2}\norm{\vx_l-\vx_{l-1}}^2\notag\\
   = & -\eta\la \nabla f(\vx_{l-1}),\vg_l\ra+\frac{L_0+L_1\norm{\nabla f(\vx_{l-1})}^p+L_2\triangle_{l-1}^q}{2}\eta^2\norm{\vg_l}^2\notag\\
   = & -\eta\la \nablaf{l}{md}+\nabla f(\vx_{l-1})-\nablaf{l}{md},\nablaf{l}{md}+\rvxi{l}\ra \notag\\
   & + \frac{L_0+L_1\norm{\nabla f(\vx_{l-1})}^p+L_2\triangle_{l-1}^q}{2}\eta^2\norm{\vg_l}^2\notag\\
   = & -\eta \norm{\nablaf{l}{md}}^2-\eta \la \nablaf{l}{md},\rvxi{l}\ra -\eta \la \nabla f(\vx_{l-1})-\nablaf{l}{md},\vg_l\ra \notag\\
   &+ \frac{L_0+L_1\norm{\nabla f(\vx_{l-1})}^p+L_2\triangle_{l-1}^q}{2}\eta^2\norm{\vg_l}^2\notag\\
  \leq & -\eta \norm{\nablaf{l}{md}}^2-\eta\la \nablaf{l}{md},\rvxi{l}\ra +\eta \norm{\nabla f(\vx_{l-1})-\nablaf{l}{md}}\norm{\vg_l} \notag\\
  & +\frac{L_0+L_1\norm{\nabla f(\vx_{l-1})}^p+L_2\triangle_{l-1}^q}{2}\eta^2\norm{\vg_l}^2,
\end{align*} 
where the first equation follows from the update rule in Algorithm \ref{algorithm3} and the last line follows from Cauchy-Schwarz inequality. 
Applying (\ref{proposition overline x_t - x_t}) with  $\frac{\beta_t-\lambda_t}{\alpha_t}=\lambda_t=\eta$, we have 
\begin{align}
  \label{bound overline x_t-x_t}
  \norm{\xmd{l}-\vx_{l-1}} = & \left(1-\alpha_l\right)\varGamma_{l-1} \norm{\sumlimits{k=1}{l-1}\frac{\alpha_k}{\varGamma_k}\frac{\left(\lambda_k-\beta_k\right)}{\alpha_k}\vg_k}
  \leq  \left(1-\alpha_l\right)\varGamma_{l-1} \sumlimits{k=1}{l-1} \frac{\alpha_k}{\varGamma_k} \eta \norm{\vg_k}\notag\\
  \leq &  \eta \mathcal{Y}\varGamma_{t-1}\sumlimits{k=1}{l-1}\frac{\alpha_k}{\varGamma_k}\leq \min\left\{1/L_1,1/L_2\right\},
\end{align}
where the first inequality follows from the triangle inequality and the second inequality holds since (\ref{eq 94}).
The last inequality follows from (\ref{proposition 1.1}).
Note that $\norm{\vg_l}\leq \mathcal{Y}$ for all $l\in[t]$ and $\norm{\xmd{l}-\vx_{l-1}}$ depends on $\vg_1, \cdots, \vg_{l-1}$.
Thus, (\ref{bound overline x_t-x_t}) holds for all $l\in[t+1]$. 
Applying Definition \ref{definition 1}, we have that 
\begin{align}
  \label{eq 96}
    & f(\vx_l)-f(\vx_{l-1}) \notag\\
  \leq & -\eta \norm{\nablaf{l}{md}}^2-\eta\la \nablaf{l}{md},\rvxi{l}\ra +\frac{L_0+L_1\norm{\nabla f(\vx_{l-1})}^p+L_2\triangle_{l-1}^q}{2}\eta^2\norm{\vg_l}^2 \notag\\
   & +\eta \left(L_0+L_1\norm{\nabla f(\vx_{l-1})}^p+L_2\triangle_{l-1}^q \right)\norm{\vx_{l-1}-\xmd{l}}\norm{\vg_l}\notag\\
  \leq & -\eta \norm{\nablaf{l}{md}}^2-\eta\la \nablaf{l}{md},\rvxi{l}\ra + \left(L_0+L_1\norm{\nabla f(\vx_{l-1})}^p+L_2\triangle_{l-1}^q\right) \eta^2\norm{\vg_l}^2\notag\\
  &+ \frac{L_0+L_1\norm{\nabla f(\vx_{l-1})}^p+L_2\triangle_{l-1}^q}{2}\norm{\vx_{l-1}-\xmd{l}}^2 \notag\\
  \leq & -\eta \norm{\nablaf{l}{md}}^2-\eta\la \nablaf{l}{md},\rvxi{l}\ra + \eta^2\left(L_0+L_1\norm{\nabla f(\vx_{l-1})}^p+L_2\triangle_{l-1}^q\right) \norm{\vg_l}^2\notag\\
  &+ \frac{\eta^2}{2}\left(L_0+L_1\norm{\nabla f(\vx_{l-1})}^p+L_2\triangle_{l-1}^q\right) \left(1-\alpha_l\right)\varGamma_l\sumlimits{k=1}{l-1}\frac{\alpha_k}{\varGamma_k}\norm{\vg_k}^2,
\end{align}
where the second inequality follows from the fact that $ab\leq \frac{a^2+b^2}{2}$ and the last inequality follows from (\ref{eq 91}).
Summing up the above inequality over $l\in[t]$, we obtain that
\begin{align}
  \label{eq 100}
  &f(\vx_t)-f(\vx_0) \notag\\
  \leq & \frac{\eta^2}{2} \sumlimits{l=1}{t}\left[\left(L_0+L_1\norm{\nabla f(\vx_{l-1})}^p+L_2\triangle_{l-1}^q\right) \left(1-\alpha_l\right)\varGamma_l\sumlimits{k=1}{l}\frac{\alpha_k}{\varGamma_k}\norm{\vg_k}^2\right]\notag\\
  &+ \eta^2\sumlimits{l=1}{t}\left(L_0+L_1\norm{\nabla f(\vx_{l-1})}^p+L_2\triangle_{l-1}^q\right) \norm{\vg_l}^2-\eta\sumlimits{l=1}{t}\norm{\nablaf{l}{md}}^2-\eta\sumlimits{l=1}{t}\la \nablaf{l}{md},\rvxi{l}\ra\notag\\
  \leq  & \frac{\eta^2}{2} \sumlimits{l=1}{t} \left[\sumlimits{k=l}{t}\left(L_0+L_1\norm{\nabla f(\vx_{k-1})}^p+L_2\triangle_{k-1}^q\right) \left(1-\alpha_k\right)\varGamma_k \right] \frac{\alpha_l}{\varGamma_l}\norm{\vg_l}^2\notag\\
 &+ \eta^2\sumlimits{l=1}{t}\left(L_0+L_1\norm{\nabla f(\vx_{l-1})}^p+L_2\triangle_{l-1}^q\right) \norm{\vg_l}^2-\eta\sumlimits{l=1}{t}\norm{\nablaf{l}{md}}^2-\eta\sumlimits{l=1}{t}\la \nablaf{l}{md},\rvxi{l}\ra.
\end{align}
\iffalse
Applying the triangle inequality of $\norm{\cdot}$, we have that for all $l\in[t+1]$,
\begin{align*}
  \norm{\nabla f(\vx_{l-1})}\leq & \norm{\nablaf{l}{md}-\nabla f(\vx_{l-1})}+\norm{\nablaf{l}{md}}\notag\\
  \leq & \left(L_0+L_1\norm{\nablaf{l}{md}}^p+L_2\left(\trianglemd{l}\right)^q\right)\norm{\xmd{l}-\vx_{l-1}}+\norm{\nablaf{l}{md}},
\end{align*}
where the last inequality follows from the Definition \ref{definition 1} with $\norm{\xmd{l}-\vx_{l-1}}\leq\min\left\{1/L_1,1/L_2\right\}, \forall l\in[t+1],$ proved in (\ref{bound overline x_t-x_t}).
Combining with the assumption that $\trianglemd{l}\leq \Ffour, \forall l\in[t]$ and Corollary \ref{corollary 1}, we have 
\begin{align*}
  \norm{\nabla f(\vx_{t-1})}\leq & \frac{1}{L_1+L_2}\left(L_0+L_1 \left(g(\Ffour)\right)^\frac{p}{2}+L_2 \Ffour^q\right)+\sqrt{g(\Ffour)}=\mathcal{K}.
\end{align*}
\fi
By (\ref{bound overline x_t-x_t}), we have that $\norm{\xmd{l}-\vx_{l-1}}\leq \min\left\{1/L_1,1/L_2\right\}$ for all $l\in[t+1]$.
Thus, applying Lemma \ref{lemma A.2} again, we obtain that
\begin{align*}
  &f(\vx_{l-1})\notag\\
  \leq & f(\xmd{l})+\la \nablaf{l}{md}, \vx_{l-1}-\xmd{l} \ra + \frac{L_0+L_1\norm{\nablaf{l}{md}}^p+L_2\left(\trianglemd{l}\right)^q}{2}\norm{\vx_{l-1}-\xmd{l}}^2\notag\\
  \leq & f(\xmd{l})+\norm{\nablaf{l}{md}}\cdot \norm{ \vx_{l-1}-\xmd{l}} +\frac{L_0+L_1\norm{\nablaf{l}{md}}^p+L_2\left(\trianglemd{l}\right)^q}{2}\norm{\vx_{l-1}-\xmd{l}}^2,
\end{align*}
where the second inequality follows from Cauchy-Schwarz inequality.
Subtracting $f^*$ from both sides and applying the assumption that $\trianglemd{l}\leq \Ffour, \forall l\in[t]$, we have 
\begin{align*}
  f(\vx_{l-1})-f^*\leq \Ffour+\frac{1}{L_1+L_2}\sqrt{g(\Ffour)}+\frac{L_0+L_1\left(g(\Ffour)\right)^{\frac{p}{2}}+L_2\Ffour^q}{2\left(L_1+L_2\right)^2}=\mathcal{K}, \quad \forall l\in[t].
\end{align*}
Thus, by Corollary \ref{corollary 1}, we have $\norm{\nabla f(\vx_{l})}\leq \sqrt{g(\mathcal{K})}$ for all $l\in[t-1]$.
Combining with (\ref{eq 100}), we obtain that 
\begin{align}
  \label{eq 101}
  f(\vx_t)-f(\vx_0)\leq &\frac{\eta^2}{2} \mathcal{W} \sumlimits{l=1}{t} \left[\sumlimits{k=l}{t}\left(1-\alpha_k\right)\varGamma_k\right] \frac{\alpha_l}{\varGamma_l}    \norm{\vg_l}^2  + \eta^2 \mathcal{W} \sumlimits{l=1}{t}\norm{\vg_l}^2 \notag\\
  &  -\eta\sumlimits{t=1}{l}\norm{\nablaf{k}{md}}^2-\eta\sumlimits{t=1}{l}\la \nablaf{k}{md},\rvxi{t}\ra\notag\\
  \leq & 2\eta^2  \mathcal{W} \sumlimits{l=1}{t}\norm{\vg_l}^2 -\eta\sumlimits{l=1}{t}\norm{\nablaf{l}{md}}^2-\eta\sumlimits{l=1}{t}\la \nablaf{l}{md},\rvxi{l}\ra,
\end{align}
where the second inequality follows from (\ref{proposition 1.2}).
Using the fact that $\norm{\va+\vb}^2\leq 2\norm{\va}^2+2\norm{\vb}^2$ and applying (\ref{eq 32}), we have that for all $l\in[t]$,
\begin{align*}
  \norm{\vg_l}^2\leq & 2\norm{\rvxi{l}}^2+2\norm{\nablaf{k}{md}}^2\notag\\
  \leq& 2\left(A\trianglemd{l}+B\norm{\nablaf{l}{md}}^2+C\right)\logte+2\norm{\nablaf{l}{md}}^2\notag\\
  =& 2\left(A\logte \trianglemd{l}+\left(B \logte+1\right)\norm{\nablaf{l}{md}}^2+C\logte\right).
\end{align*}
Combining with (\ref{eq 101}) and applying Lemma \ref{high probability sum non-convex constant} to the summation of the martingale difference sequence, we obtain that
\begin{align}
  \label{eq 101.1}
  f(\vx_t)-f(\vx_0) \leq & 4\eta^2 \mathcal{W}\left(B\logte+1\right) \sumlimits{l=1}{t} \norm{\nablaf{l}{md}}^2+ 4\eta^2  \mathcal{W} A\logte \sumlimits{l=1}{t} \trianglemd{l}\notag\\
  &+4\eta^2 t  \mathcal{W} C\logte - \eta\sumlimits{l=1}{t}\norm{\nablaf{l}{md}}^2 + \frac{1}{4}\eta\sumlimits{l=1}{t}\frac{\mathcal{P}_l^2}{\mathcal{P}_c^2}\norm{\nablaf{l}{md}}^2+3\eta \mathcal{P}_c^2\log\frac{T}{\delta}\notag\\
  \leq & -{\eta \over 2} \sumlimits{l=1}{t} \norm{\nablaf{l}{md}}^2+ \frac{1}{4}+\frac{1}{4}+\frac{1}{2}.
\end{align}
Since $\xmd{1}=\left(1-\alpha_1\right)\xag{0}+\alpha_1 \vx_0$ and $\xag{0}=\vx_0$, we have $f(\vx_0)=f(\xmd{1})$.
Thus, 
\begin{align}
  \label{eq 102}
  \triangle_t\leq \trianglemd{1}+1.
\end{align}
Since (\ref{bound overline x_t-x_t}) holds for all $l\in [t+1]$, we have that 
\begin{align*}
  \norm{\xmd{t+1}-\vx_t}\leq \min\left\{1/L_1,1/L_2\right\}.
\end{align*}
Therefore, applying Lemma \ref{lemma A.2} again, we obtain that 
\begin{align*}
  f(\xmd{t+1})\leq & f(\vx_t)+\la \nabla f(\vx_{t}), \xmd{t+1}-\vx_t\ra+\frac{L_0+L_1\norm{\nabla f(\vx_t)}^p+L_2\triangle_{t}^q}{2}\norm{\xmd{t+1}-\vx_t}^2\notag\\
  \leq & f(\vx_t)+ \norm{\nabla f(\vx_{t})}\cdot \norm{\xmd{t+1}-\vx_t}+\frac{L_0+L_1\norm{\nabla f(\vx_t)}^p+L_2\triangle_{t}^q}{2}\norm{\xmd{t+1}-\vx_t}^2,
\end{align*}
where the second inequality follows from Cauchy-Schwarz inequality.
Subtracting $f^*$ from both sides and combining with (\ref{eq 102}), we have 
\begin{align}
  \trianglemd{t+1}\leq & \trianglemd{1}+1+\frac{1}{L_1+L_2}\sqrt{g(1+\trianglemd{1})}+\frac{L_0+L_1\left(g(1+\trianglemd{1})\right)^{\frac{p}{2}}+L_2\left(1+\trianglemd{1}\right)^q}{2\left(L_1+L_2\right)^2} \leq \Ffour.
\end{align}
Now we finish the induction and obtain the desired result.

\end{proof}

\begin{proof}[Proof of Theorem \ref{theorem 4}]
  From Lemma \ref{nonconvex bound}, we have that with probability at least $1-2\delta$, $\trianglemd{t}\leq \Ffour$ for all $t\in[T]$.
  Thus, (\ref{eq 101.1}) holds when $t=T$, i.e.,
  \begin{align}
   {\eta \over 2} \sumlimits{l=1}{T} \norm{\nablaf{l}{md}}^2\leq 1+ \trianglemd{1}.  
  \end{align}
  Dividing $T \eta/2$ on both sides and combining with the constraints of $\eta$, we get the desired results.
\end{proof}

\section{Omitted proof}
\label{section omitted proof}
\begin{proof}[Proof of Lemma \ref{lemma 2.1}]
  To start with, we will prove the first line by induction.
  It is obvious that the inequality holds for $B_0=0$. 
  Suppose that for some $0\leq t\leq T$, we have
  \begin{align*}
    \frac{1}{4}k^2\leq B_k\leq k^2, \forall k\in[t].
  \end{align*}
  Then, we have 
  \begin{align*}
    B_{t+1}\leq t^2+\frac{1}{2}\left(1+\sqrt{4t^2+1}\right)\leq t^2+\frac{1}{2}\left(1+2t+1\right)\leq (t+1)^2,
  \end{align*}
  and 
  \begin{align*}
    B_{t+1}\geq \frac{1}{4}t^2+\frac{1}{2}\left(1+\sqrt{t^2+1}\right)\geq \frac{1}{4}\left(t+1\right)^2.
  \end{align*}
  Therefore, we finish the proof for  $\frac{1}{4}t^2\leq B_t\leq t^2, \forall t\in[T]$.
For the second conclusion in Lemma \ref{lemma 2.1},
  \begin{align*}
    \left(A_t-A_{t-1}\right)^2=\left(B_t-B_{t-1}\right)^2=&\frac{1}{4}\left(1+2\sqrt{4B_{t-1}+1}+4B_{t-1}+1\right)\notag\\
    = & B_{t-1}+\frac{1}{2}\left(1+\sqrt{4B_{t-1}+1}\right)\notag\\
    = & B_t.
  \end{align*}
Since $B_t\geq \frac{1}{4}t^2, \forall t\in[T]$, we have $B_t\geq 0, \forall t\in[T]$. Therefore, 
  \begin{align*}
    A_t-A_{t-1}= B_t-B_{t-1}=\frac{1}{2}+\frac{1}{2}\sqrt{4B_{t-1}+1}\geq 1.
  \end{align*}
Now we finish the proof for all the inequalities.
\end{proof}

\bibliographystyle{abbrv}
\bibliography{references}

\end{document}